\author{Geoffrey Powell}
\title{The primitive filtration of the Leibniz complex}
\subjclass[2000]{17A32,17B55}
\keywords{Leibniz algebra, Lie algebra, Leibniz complex, Pirashvili subcomplex, primitive filtration, $L_\infty$-structure}
\address{Univ Angers, CNRS, LAREMA, SFR MATHSTIC, F-49000 Angers, France}
\email{Geoffrey.Powell@math.cnrs.fr}
\urladdr{http://math.univ-angers.fr/~powell/}
\date{}
\newtheorem{THM}{Theorem}
\newtheorem{COR}[THM]{Corollary}
\newtheorem{thm}{Theorem}[section]
\newtheorem{prop}[thm]{Proposition}
\newtheorem{cor}[thm]{Corollary}
\newtheorem{lem}[thm]{Lemma}
\theoremstyle{definition}
\newtheorem{defn}[thm]{Definition}
\newtheorem{exam}[thm]{Example}
\theoremstyle{remark}
\newtheorem{rem}[thm]{Remark}
\newtheorem{nota}[thm]{Notation}
\newtheorem{hyp}[thm]{Hypothesis}
\newcommand{\ld}{{}_{(1)}\Delta}
\newcommand{\dlie}{d^{\mathrm{Lie}}}
\newcommand{\filt}{\mathsf{f}}
\newcommand{\kring}{\mathbbm{k}}
\newcommand{\g}{\mathfrak{g}}
\newcommand{\h}{\mathfrak{h}}
\newcommand{\glie}{{\g_{\mathrm{Lie}}}}
\newcommand{\alg}{\mathrm{Alg}}
\newcommand{\leibalg}{\alg^{\mathsf{Leib}}}
\newcommand{\leibalgf}{\alg^{\mathsf{Leib}, \mathrm{f}}}
\newcommand{\liealg}{\alg^{\mathsf{Lie}}}
\newcommand{\leibopd}{\mathsf{Leib}}
\newcommand{\lieopd}{\mathsf{Lie}}
\newcommand{\op}{^\mathrm{op}}
\newcommand{\nat}{\mathbb{N}}
\newcommand{\zed}{\mathbb{Z}}
\newcommand{\ce}{\mathrm{CE}}
\renewcommand{\hom}{\mathrm{Hom}}
\newcommand{\id}{\mathrm{Id}}
\newcommand{\sym}{\mathfrak{S}}
\newcommand{\ob}{\mathrm{Ob}}
\newcommand{\coker}{\mathrm{coker}}
\newcommand{\cop}{^{\ +\circlearrowright}}
\newcommand{\dbar}{\overline{d}}
\newcommand{\dleib}{d_{\mathrm{Leib}}}
\newcommand{\lp}{\star}
\newcommand{\opend}{\mathsf{End}}
\newcommand{\nt}{\mathsf{Nat}}
\newcommand{\cat}{\mathbf{cat}\ }
\newcommand{\vect}{\mathsf{Vect}_\kring}
\newcommand{\vectf}{{\vect^{\mathrm{f}}}}
\newcommand{\opd}{\mathscr{O}}
\newcommand{\opdalg}{\alg^\opd}
\newcommand{\opdalgf}{\alg^{\opd, \mathrm{f}}}
\newcommand{\sgn}{\mathrm{sgn}}
\begin{document}

\footnotetext{https://orcid.org/0000-0003-2564-1202}

\begin{abstract}
Pirashvili exhibited a small subcomplex of the Leibniz complex $(T(s \g), \dleib)$ of a Leibniz algebra $\g$. The main result of this paper generalizes this result to show that the primitive filtration of $T(s\g)$ provides an increasing, exhaustive filtration of the Leibniz complex by subcomplexes, thus establishing a conjecture due to Loday. The associated spectral sequence is used to give a new proof of Pirashvili's conjecture that, when $\g$ is a free Leibniz algebra, the homology of the Pirashvili complex is zero except in degree one.  

This result is then used to show that the desuspension of the Pirashvili complex carries a natural $L_\infty$-structure that induces the natural Lie algebra structure on the homology of the complex in degree zero. 
\end{abstract}

\maketitle
\section{Introduction}
\label{sect:intro}

Leibniz algebras are a `non-commutative geometry' avatar of Lie algebras: the binary operation is not required to be anticommutative and only the right Leibniz identity is imposed. In particular, any Lie algebra is a Leibniz algebra. In the other direction, given a Leibniz algebra $\g$, one can form the associated Lie algebra $\glie$. This is given as the quotient $\g \twoheadrightarrow \glie$  imposing anticommutativity; the  functor $\g \mapsto \glie$ is the left adjoint to the inclusion of Lie algebras in Leibniz algebras.

It was a fundamental observation of Loday's that the Chevalley-Eilenberg complex for Lie algebras has a counterpart for Leibniz algebras, $(T(s \g), \dleib)$, where $T(s\g)$ is the tensor algebra on the homological suspension of $\g$. This comes equipped with a morphism of complexes to the Chevalley-Eilenberg complex of $\glie$. This induces a comparison map from the Leibniz homology of $\g$ to the Lie algebra homology of $\glie$
\[
HL_*(\g; \kring)
\rightarrow 
H_*(\glie; \kring)
\]
with trivial coefficients. The homological relationships associated to the passage between Leibniz algebras and Lie algebras are of significant interest \cite{Cuv,MR1710505}; more generally, the relationship between Leibniz and Lie algebras has been  studied in the homotopical context in \cite{MR1638169}.

Henceforth   $\kring$ is taken to be a field of characteristic zero. In foundational work dating from 2002, Pirashvili \cite{2019arXiv190400121P} showed  that the Leibniz differential restricts to define a subcomplex $(\lieopd (s\g), \dleib) \subset (T( s\g), \dleib)$ of the Leibniz complex, where $\lieopd(-)$ is the free Lie algebra functor. The starting point of this paper is an alternative proof of this fact that is given in Proposition \ref{prop:lie_stable}.

 This feeds into the study of the primitive filtration $\filt_n T (s \g)$ associated to the primitively-generated Hopf algebra structure on $T(s\g)$. The following establishes a conjecture due to Loday \cite[Conjecture 2]{2019arXiv190400121P}:

\begin{THM}
[Theorem \ref{thm:prim_filt_stable}]
For $\g$ a Leibniz algebra and $n \in \nat$, $\filt_n T(s \g)$ is a subcomplex of the Leibniz complex $(T(s\g),d)$. This defines an increasing, exhaustive filtration 
\[
\filt_{0} T(s\g) = \kring \subset 
\ldots \subset \filt_n T(s\g) \subset \filt_{n+1} T(s\g) \subset \ldots \subset T(s\g).
\]
\end{THM}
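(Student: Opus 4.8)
The plan is to reduce the statement to the assertion that the Leibniz differential $\dleib$ is a \emph{coderivation} for the cocommutative coproduct $\Delta$ on $T(s\g)$ that makes $s\g$ primitive, i.e.\ the coproduct defining the primitively-generated Hopf algebra structure in play. Recall that for this connected, cocommutative Hopf algebra the primitive (coradical) filtration admits the intrinsic description $\filt_n T(s\g) = \kring \oplus \ker\big(\overline{\Delta}^{(n)}\big)$, where $\overline{\Delta}$ is the reduced coproduct and $\overline{\Delta}^{(n)}\colon \overline{T}(s\g) \to \overline{T}(s\g)^{\otimes(n+1)}$ is its $n$-fold iterate; in particular $\filt_0 = \kring$ and $\filt_1 = \kring \oplus \lieopd(s\g)$, the primitives being exactly $\ker \overline{\Delta}$. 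The filtration is increasing because $\overline{\Delta}^{(n+1)}$ factors through $\overline{\Delta}^{(n)}$, and exhaustive because any element of tensor length $m$ is annihilated by $\overline{\Delta}^{(m)}$, whose image lies in a sum of tensors each of whose $m+1$ factors is nonempty. Thus the only thing to prove is that each $\filt_n T(s\g)$ is stable under $\dleib$.

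The crux is the coderivation identity
\[
\overline{\Delta} \circ \dleib = (\dleib \otimes \id + \id \otimes \dleib) \circ \overline{\Delta},
\]
equivalently the statement that $\overline{\Delta}$ is a morphism of complexes when the target carries the total tensor-product differential (with Koszul signs). This is precisely the generalization of Proposition \ref{prop:lie_stable}: evaluating the identity on $\ker\overline{\Delta}$ recovers the stability of $\lieopd(s\g)$, the $n=1$ case. I would prove the identity by a term-by-term comparison. On the left, $\dleib(x_1 \cdots x_n)$ is a signed sum over pairs $i<j$ of the words obtained by replacing $x_i$ with $[x_i,x_j]$ and deleting $x_j$; applying $\overline{\Delta}$ then splits each such word into two complementary ordered subwords. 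On the right, $\overline{\Delta}$ first splits $x_1 \cdots x_n$ into $x_S \otimes x_{S^c}$ over proper nonempty subsets $S$, and then $\dleib$ brackets a pair lying inside one of the two factors. These two families of summands are in bijection: a bracketed pair $(i,j)$ together with a splitting of the resulting word corresponds to the splitting of $\{1,\dots,n\}$ that keeps $i$ and $j$ in the same part, followed by bracketing that pair. The remaining task is to check that, under this bijection, the Koszul signs — from the suspension $s$, from the reordering in the coproduct, and from the positional sign in $\dleib$ — agree.

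Granting the coderivation identity, the conclusion is formal: by coassociativity the iterate $\overline{\Delta}^{(n)}$ is again a morphism of complexes onto $\overline{T}(s\g)^{\otimes(n+1)}$ with its total tensor-product differential, so its kernel is a subcomplex; hence $\filt_n T(s\g) = \kring \oplus \ker\overline{\Delta}^{(n)}$ is stable under $\dleib$. The main obstacle is the second paragraph: while the combinatorial bijection between summands is clean and independent of degree, organizing the sign bookkeeping so that the two sides match is delicate precisely because the Leibniz bracket is not antisymmetric and is inserted asymmetrically, always at the left-hand position $i$ of the pair. I would therefore isolate the coderivation identity as the single technical lemma, verify the sign on a generic summand, and deduce the full identity by naturality in $\g$.
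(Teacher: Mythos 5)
Your proposal is correct, but it takes a genuinely different route from the paper's proof. The paper keeps the filtration in its original form (the image of $\bigoplus_{i\le n}\lieopd(s\g)^{\otimes i}$ under $T(\lieopd(s\g))\twoheadrightarrow U(\lieopd(s\g))\cong T(s\g)$) and proves stability by induction on $n$: combining the deconcatenation decomposition $(\dleib)_{n+t}=(\dleib)_n\otimes\id+(-1)^n\id\otimes(\dleib)_t+\dlie_{n,t}$ of Proposition \ref{prop:diff_dlie}, the vanishing of the correction term on primitives of weight $>1$ (Corollary \ref{cor:diff_dlie_prim}), Proposition \ref{prop:lie_stable}, and the stability of $\lieopd(s\g)$ under the right $\glie$-action (Lemma \ref{lem:glie_action}), it obtains the explicit formula (\ref{eqn:diff_gen}), which places $\dleib(X_1\otimes\dots\otimes X_n)$ in the images of $\lieopd(s\g)^{\otimes n}\oplus\lieopd(s\g)^{\otimes n-1}$; this has the further benefit of identifying the associated graded as $S^n(\lieopd(s\g),\dleib)$, which is exactly what feeds the spectral sequence of Theorem \ref{thm:ss}. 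You instead reduce everything to the single assertion that $\dleib$ is a coderivation for the shuffle coproduct, together with the standard characteristic-zero identification $\filt_n T(s\g)=\kring\oplus\ker\overline{\Delta}^{(n)}$ of the enveloping-algebra filtration with the coradical filtration (PBW/Milnor--Moore). Granting these, the theorem is formal, and your lemma is strictly stronger than what the paper proves: it makes $(T(s\g),\dleib,\Delta)$ a differential graded coalgebra, and it subsumes Proposition \ref{prop:lie_stable} (your $n=1$ case) instead of quoting it. This reflects the operadic fact that the Leibniz complex is a bar construction (a cofree Zinbiel coalgebra with coderivation differential), whose co-half-shuffle symmetrizes to the shuffle coproduct.

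The coderivation identity you isolate is indeed true, and your bijection matches signs term by term, so the strategy does not founder on cancellations. Concretely, when the pair $i<j$ lies in the part $S$ containing the first output factor (so the contracted word is split along $T=S\setminus\{j\}$), the ratio of the unshuffle signs is $\epsilon(S)/\epsilon(T)=(-1)^{\#\{u\notin S\,:\,u<j\}}=(-1)^{j-b}$, where $b$ is the rank of $j$ inside $S$; this exactly converts the positional sign $(-1)^{j+1}$ of $\dleib$ on the full word into the positional sign $(-1)^{b+1}$ of $\dleib$ applied inside the part. When the pair lies in the other part, the same computation works once the Koszul sign $(-1)^{|S|}$ for moving $\dleib$ across the first factor is included, via the congruence $\#\{v\in S\,:\,v>j\}+|S|+b\equiv j\pmod 2$. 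Two caveats. First, this sign verification is the entire mathematical content of your lemma; as written, your proposal is an outline of that step rather than a proof of it, so the computation above (or its equivalent) must be supplied. Second, your closing appeal to ``naturality in $\g$'' buys nothing: the identity is multilinear in the entries $x_1,\dots,x_n$, so checking it on a generic summand already \emph{is} the full verification.
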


The associated spectral sequence leads to a new proof of \cite[Conjecture 1]{2019arXiv190400121P}, namely that, for $\g = \leibopd (V)$ the free Leibniz algebra on a $\kring$-vector space $V$, the homology of $\lieopd( s\g)$ is concentrated in degree one: 

\begin{THM}[Theorem \ref{thm:liezation}]
\label{THM:liez}
For $V$ a $\kring$-vector space, 
\[
H_* (\lieopd (s \leibopd (V)), \dleib)\cong 
\left\{ 
\begin{array}{ll}
\lieopd (V) & *=1 \\
0 & \mbox{otherwise.}
\end{array}
\right.
\]
\end{THM}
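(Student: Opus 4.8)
The plan is to exploit the spectral sequence arising from the primitive filtration of Theorem \ref{thm:prim_filt_stable}, applied to the free Leibniz algebra $\g = \leibopd(V)$. Write $C = (\lieopd(s\g), \dleib)$ for the Pirashvili subcomplex of Proposition \ref{prop:lie_stable}. Since $T(s\g) = U(\lieopd(s\g))$ with $\lieopd(s\g)$ its space of primitives, the primitive filtration is the Poincaré--Birkhoff--Witt filtration, so $\mathrm{gr}\, T(s\g) \cong S(\lieopd(s\g))$. As $\dleib$ respects the coalgebra structure and restricts to the Pirashvili differential $\partial$ on primitives, its associated graded is the degree-preserving coderivation of $S(\lieopd(s\g))$ extending $\partial$; that is, the $E^0$-differential on each $S^n(\lieopd(s\g))$ is the $n$-th symmetric power of $(\lieopd(s\g),\partial)$. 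Künneth over the characteristic-zero field $\kring$ then gives
\[
E^1 \;\cong\; \textstyle\bigoplus_{n} S^n\bigl(H_*(C)\bigr),
\]
and the spectral sequence converges to the Leibniz homology $HL_*(\leibopd(V);\kring)$.

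I would grade everything by the internal weight coming from $\leibopd(V) = \bigoplus_{m\ge 1} V^{\otimes m}$; this weight is preserved by $\dleib$, hence by all differentials, so the spectral sequence splits over weights $m$. Two inputs then drive the computation. First, the abutment is concentrated in weights $0$ and $1$: one has $HL_*(\leibopd(V);\kring) = \kring$ in degree $0$, $V$ in degree $1$, and $0$ otherwise, so the weight-$m$ part of the abutment vanishes for every $m \ge 2$. Second, the Chevalley--Eilenberg complex of the free Lie algebra $\lieopd(V)$ is acyclic in each weight $m \ge 2$, since $H^{\ce}_*(\lieopd(V))=\kring\oplus V$ is likewise concentrated in weights $0,1$. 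The bridge between the two is the identification of the $E^1$-differential $d^1\colon S^2(H_*C)\to H_*C$ with a graded bracket on $H_*(C)$, and of $(S(H_*C), d^1)$ with a Chevalley--Eilenberg complex; concretely, $d^1$ measures the failure of $\dleib$ to be a derivation on a product of two primitives, projected back to the primitives.

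The computation then proceeds by induction on the weight $m$. Note first that $\lieopd(s\g)$ is concentrated in homological degrees $\ge 1$, so $H_*(C)$ lives in degrees $\ge 1$ and $S^n(H_*C)$ in degrees $\ge n$; moreover, in weight $m$ any factor of $S^n$ with $n\ge 2$ has weight strictly less than $m$. In weight $1$ the complex $C$ is just $sV$ with zero differential, so $H_1(C)=V=\lieopd(V)$ in weight $1$. For the inductive step, assume $H_*(C)$ is concentrated in degree $1$ and equal to $\lieopd(V)$ in all weights $<m$. Then, in weight $m$, the columns $S^{n}$ with $n\ge 2$ involve only strictly lower weights, hence are known, lie on the diagonal (degree $=$ column), and --- using the identification of $d^1$ with the Chevalley--Eilenberg differential of $\lieopd(V)$ --- form the truncated weight-$m$ Chevalley--Eilenberg complex, which is acyclic. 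The only a priori undetermined part is the column $S^1=H_*(C)$ in weight $m$. Since the weight-$m$ spectral sequence must converge to $0$, the entries of this column in degrees $\ge 2$ receive and emit no surviving differentials and therefore vanish, while comparison of the last map of the diagonal complex with the Chevalley--Eilenberg differential $S^2\lieopd(V)\to\lieopd(V)$ (surjective in weights $\ge 2$, with the same kernel) forces $H_1(C)\cong\lieopd(V)$ in weight $m$. This closes the induction and yields the stated answer.

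The hard part is the middle input: identifying the $E^1$-differential $d^1$ with the Chevalley--Eilenberg differential of the free Lie algebra, equivalently showing that the induced graded bracket on $H_*(\lieopd(s\g))$ restricts in degree $1$ to the free Lie bracket on $\lieopd(V)$. This is where the coalgebra structure of the Leibniz complex and the precise form of $\dleib$ must be used, and it is exactly the binary shadow of the $L_\infty$-structure produced later in the paper; the remaining ingredients --- the abutment $HL_*(\leibopd(V);\kring)$ and the acyclicity of the free Lie Chevalley--Eilenberg complex in positive weight --- are standard.
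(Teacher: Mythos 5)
Your proposal is correct in substance, and it closes the spectral sequence argument by a genuinely different mechanism than the paper. Both proofs run the spectral sequence of Theorem \ref{thm:ss} for $\g = \leibopd(V)$ with the same three inputs: the identification $E^1 \cong S^*(H_*(C))$ (writing $C = (\lieopd(s\leibopd(V)), \dleib)$ as you do), the identification of the row $q=0$ with the Chevalley--Eilenberg complex of $\lieopd(V)$, and the known homology of free Lie and free Leibniz algebras (Proposition \ref{prop:hom_free_algebras}). But where the paper argues by contradiction from a minimal non-vanishing homological degree --- which forces it to first establish the vanishing zone (Corollary \ref{cor:vanishing_zone}) and, crucially, the vanishing of $d^1 : E^1_{2,q} \rightarrow E^1_{1,q}$ for $q>0$ (Proposition \ref{prop:d1}) --- you split the entire spectral sequence along the internal weight grading of $\leibopd(V) = \bigoplus_{m \geq 1} V^{\otimes m}$ and induct on the weight. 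This makes both of the paper's auxiliary results unnecessary: in weight $m$, the inductive hypothesis forces every column $p \geq 2$ to be concentrated on the diagonal $q=0$, so all entries that could emit or receive problematic differentials vanish for free. What the paper's formulation buys in exchange is independence from the weight grading: that is what allows the same argument to generalize to arbitrary Leibniz algebras in Theorem \ref{thm:comparison}, whereas your argument is confined to the free case (which is all this statement requires).

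Two corrections, both fixable by citation rather than by new work. First, what you call ``the hard part'' is not actually a gap: in your weight induction the only piece of $d^1$ ever used is its restriction to the row $q=0$, and that identification with the Chevalley--Eilenberg differential is exactly part (1) of Theorem \ref{thm:ss} (resting on Propositions \ref{prop:leib_ce} and \ref{prop:Lie_hom_glie_action}). The stronger claim that all of $d^1$ is the Chevalley--Eilenberg differential of a graded bracket on $H_*(C)$ is not needed --- and establishing it in that generality would essentially require Proposition \ref{prop:d1}, i.e., the very ingredient your route avoids. Relatedly, the identification $H_1(C) \cong \lieopd(V)$ in weight $m$ need not be extracted from the spectral sequence at all: it holds a priori, in every weight, by Proposition \ref{prop:Lie_hom_glie_action} together with Lemma \ref{lem:liezation_free_leib}, which streamlines the end of your inductive step. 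Second, the assertion that ``$\dleib$ respects the coalgebra structure'' is not correct as stated: $\dleib$ is not a coderivation of $T(s\g)$ (if it were, Proposition \ref{prop:lie_stable} and Theorem \ref{thm:prim_filt_stable} would be trivialities). The statement you actually need --- that the associated graded of the filtered complex is $S^*(\lieopd(s\g), \dleib)$ --- is precisely the final assertion of Theorem \ref{thm:prim_filt_stable} and should be cited as such.
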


This result was first proved by Mostovoy (see \cite[Proposition 13]{2019arXiv191003754M}) by different methods, based on his interpretation of Leibniz algebras in terms of differential graded Lie algebras.

The argument used in the proof of Theorem \ref{THM:liez} generalizes to relate the behaviour of the comparison map $HL_*(\g; \kring)
\rightarrow 
H_*(\glie; \kring)$ with that of the homology of the complex $(\lieopd (s\g), \dleib)$ (see Theorem \ref{thm:comparison}).

The significance  of Theorem \ref{THM:liez} is illustrated by 
the following Corollary, due  to Pirashvili \cite{2019arXiv190400121P}, who showed that it is a consequence of 
\cite[Conjecture 1]{2019arXiv190400121P}:

\begin{COR}
The homology $H_{*+1} (\lieopd (s\g), \dleib)$ is isomorphic to the Quillen derived functors of $(-)_{\mathrm{Lie}}$ applied to the Leibniz algebra $\g$. 
\end{COR}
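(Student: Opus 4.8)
The plan is to realise the desuspended Pirashvili complex as the internal part of a bicomplex built from a free simplicial resolution of $\g$, and then to read off the derived functors by running the associated spectral sequence in each of the two directions. Recall that, in Quillen's sense, the left derived functors of $(-)_{\mathrm{Lie}}\colon \leibalg \to \liealg$ are $\mathbb{L}_*(-)_{\mathrm{Lie}}(\g) = \pi_*\big((P_\bullet)_{\mathrm{Lie}}\big)$, where $P_\bullet \twoheadrightarrow \g$ is a free simplicial resolution: each $P_p = \leibopd(V_p)$ is a free Leibniz algebra and the augmentation induces $\pi_0 \cong \g$, $\pi_i = 0$ for $i>0$ on underlying vector spaces. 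First I would apply the functor $\g \mapsto (\lieopd(s\g), \dleib)$ levelwise to $P_\bullet$, obtaining a bicomplex $C_{p,q} = \big(\lieopd(sP_p)\big)_q$, concentrated in the region $p\geq 0$, $q\geq 1$, carrying the internal Leibniz differential $\dleib$ in the $q$-direction (legitimate by Proposition \ref{prop:lie_stable}) and the alternating sum of faces in the simplicial $p$-direction. Both induced spectral sequences converge to the homology of the total complex.

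Running the spectral sequence that takes $\dleib$-homology first, each column is the Pirashvili complex of the \emph{free} Leibniz algebra $P_p$, so Theorem \ref{THM:liez} applies: the internal homology is concentrated in Leibniz-degree $1$, where it is naturally isomorphic to $(P_p)_{\mathrm{Lie}} \cong \lieopd(V_p)$. Thus the $E^1$-page is the simplicial Lie algebra $(P_\bullet)_{\mathrm{Lie}}$ placed in the single row $q=1$, the surviving differential is its simplicial differential, and so $E^2_{p,1} = \pi_p\big((P_\bullet)_{\mathrm{Lie}}\big) = \mathbb{L}_p(-)_{\mathrm{Lie}}(\g)$, the spectral sequence collapsing at this page. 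The total homology is therefore $H_n(\mathrm{Tot}) \cong \mathbb{L}_{n-1}(-)_{\mathrm{Lie}}(\g)$.

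Running the other spectral sequence, which takes simplicial homology first, requires controlling, for fixed $q$, the functor $\g \mapsto \big(\lieopd(s\g)\big)_q$. This depends only on the underlying vector space of $\g$ and is a homogeneous polynomial functor of degree $q$, namely a direct summand of $V \mapsto V^{\otimes q}$; the point is that over a field of characteristic zero such a functor preserves the acyclicity of the resolution. Indeed, the Eilenberg--Zilber and Künneth theorems give $\pi_*\big(P_\bullet^{\otimes q}\big) \cong \big(\pi_*(P_\bullet)\big)^{\otimes q}$, which is concentrated in degree $0$ at $\g^{\otimes q}$; passing to the relevant Schur summand, exact as a direct summand in characteristic zero, shows $\pi_p\big((\lieopd(sP_\bullet))_q\big)$ vanishes for $p>0$ and equals $\big(\lieopd(s\g)\big)_q$ for $p=0$. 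Hence the $E^1$-page sits in the column $p=0$, where the residual differential is $\dleib$, giving $E^2_{0,q} = H_q(\lieopd(s\g), \dleib)$ and $H_n(\mathrm{Tot}) \cong H_n(\lieopd(s\g), \dleib)$. Comparing the two computations yields $H_{n}(\lieopd(s\g), \dleib) \cong \mathbb{L}_{n-1}(-)_{\mathrm{Lie}}(\g)$, that is, the asserted $H_{*+1}(\lieopd(s\g), \dleib) \cong \mathbb{L}_*(-)_{\mathrm{Lie}}(\g)$. The main obstacle is bookkeeping in the first spectral sequence: one must check that the degree-$1$ isomorphism of Theorem \ref{THM:liez} is the restriction to free algebras of a natural transformation $H_1(\lieopd(s-)) \Rightarrow (-)_{\mathrm{Lie}}$ on all of $\leibalg$, so that it commutes with the face operators (which are arbitrary morphisms of free Leibniz algebras) and the $E^1$-row genuinely reconstitutes $(P_\bullet)_{\mathrm{Lie}}$ with its correct simplicial structure; this is ensured by the general identification $H_1(\lieopd(s\g)) \cong \glie$, leaving the characteristic-zero exactness of the Schur functors in the second spectral sequence as the only other, and comparatively standard, input.
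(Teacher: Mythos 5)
Your proposal is correct, but note that the paper itself does not prove this Corollary: it is stated as due to Pirashvili \cite{2019arXiv190400121P}, who showed it to be a consequence of \cite[Conjecture 1]{2019arXiv190400121P}, i.e.\ of Theorem \ref{thm:liezation}; the paper's own contribution is the proof of that theorem, and the deduction of the Corollary from it is not reproduced. Your argument supplies exactly that deduction, by the standard route: choose a free simplicial resolution $P_\bullet \twoheadrightarrow \g$, form the bicomplex $\big(\lieopd (sP_p)\big)_q$, and compare the two spectral sequences --- one collapsing onto the row $q=1$ by Theorem \ref{thm:liezation}, which computes $\pi_*\big((P_\bullet)_{\mathrm{Lie}}\big)$, i.e.\ the Quillen derived functors, the other collapsing onto the column $p=0$ because each $\lieopd_q (s -)$ is a Schur functor of the underlying vector space, and such functors preserve acyclicity of the resolution in characteristic zero (Eilenberg--Zilber, K\"unneth, and semisimplicity of $\kring[\sym_q]$, cf.\ Corollary \ref{cor:semisimple}). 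The two points you flag are indeed the essential ones: the naturality in $\g$ of the identification $H_1(\lieopd(s\g), \dleib) \cong \glie$ is what Proposition \ref{prop:Lie_hom_glie_action} provides (via the cokernel description of Remark \ref{rem:leibniz_kernel}), so the $E^1$ row really is the simplicial Lie algebra $(P_\bullet)_{\mathrm{Lie}}$ with its simplicial structure. Two routine points you leave implicit should still be recorded: the resolution must be free in the simplicial sense (degeneracies preserving a chosen basis of generators) so that it is a legitimate cofibrant replacement for Quillen's derived functors, and the Eilenberg--Zilber equivalence must be taken $\sym_q$-equivariantly (the shuffle map is symmetric) before passing to the Schur summand $\lieopd_q$. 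With those caveats, your proof is complete and, in all likelihood, is essentially Pirashvili's own argument; what it buys over the paper's citation is a self-contained treatment resting only on results proved in the paper plus standard simplicial homological algebra.
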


Pirashvili asked the author whether the desuspension $s^{-1} \lieopd (s \g)$ carries a {\em natural} $L_\infty$-structure that extends the Leibniz differential and which induces the Lie structure on $\glie$. The remainder of the paper is devoted to the construction of such a structure. 

\begin{THM}[Theorem \ref{thm:main}]
\label{THM:Linf}
For $\g$ a Leibniz algebra, there is a natural $L_\infty$-structure on the desuspension of the Pirashvili complex $s^{-1}(\lieopd (s\g) , \dleib)$ that induces the canonical Lie algebra structure on $\glie$ in homology (up to sign).
\end{THM}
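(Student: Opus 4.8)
The plan is to exploit the standard dictionary between $L_\infty$-structures and square-zero coderivations on cofree cocommutative coalgebras. Writing $W = s^{-1}\lieopd(s\g)$, so that $sW = \lieopd(s\g)$, an $L_\infty$-structure on $W$ is the same datum as a degree $-1$ coderivation $D$ with $D^2 = 0$ on the cofree conilpotent cocommutative coalgebra $S^c(\lieopd(s\g))$, the components $\ell_n \colon S^n(\lieopd(s\g)) \to \lieopd(s\g)$ of its corestriction to the cogenerators being the (suspended) structure brackets. So it suffices to produce such a coderivation, naturally in $\g$. The point is that this cofree coalgebra is already visible inside the Leibniz complex: since $T(s\g)$, equipped with the unshuffle coproduct for which $s\g$ is primitive, is the primitively generated cocommutative Hopf algebra $U(\lieopd(s\g))$, the Poincaré–Birkhoff–Witt symmetrization furnishes a natural isomorphism of cocommutative coalgebras $e \colon S^c(\lieopd(s\g)) \xrightarrow{\ \cong\ } U(\lieopd(s\g)) = T(s\g)$ (here $\kring$ has characteristic zero).

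The key step is to promote Theorem \ref{thm:prim_filt_stable} to the statement that $\dleib$ is not merely compatible with the primitive filtration but is in fact a \emph{coderivation} of the cocommutative coproduct on $T(s\g)$, that is $\Delta \dleib = (\dleib\otimes\id + \id\otimes\dleib)\Delta$. This is a direct verification from the explicit formulas: the two entries contracted by the Leibniz bracket either lie in a common tensor factor after comultiplying — matching the ``bracket-then-comultiply'' terms — or are separated, in which case they contribute nothing on either side; the decisive feature is that the Koszul signs produced by the homological suspension exactly reconcile Loday's signs in $\dleib$ with the shuffle signs in $\Delta$. Granting this, $D := e^{-1}\dleib e$ is a coderivation of $S^c(\lieopd(s\g))$ with $D^2 = e^{-1}\dleib^2 e = 0$, and I read off the brackets $\ell_n$ by corestriction. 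As the unshuffle coproduct, $\dleib$ and the symmetrization $e$ are all natural in $\g$, the resulting $L_\infty$-structure is natural.

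It remains to match the low terms with the Leibniz bracket. Because $e$ is the identity on cogenerators and $\dleib$ preserves $\lieopd(s\g)$ by Proposition \ref{prop:lie_stable}, the first bracket is $\ell_1 = \dleib|_{\lieopd(s\g)}$, so the $L_\infty$-differential is exactly the Pirashvili differential and one checks $H_0(W) \cong \glie$. For $\ell_2$ I compute in lowest weight: graded symmetrization gives $e(sx \odot sy) = \tfrac12(sx\cdot sy - sy\cdot sx)$ (the minus sign because $sx, sy$ are odd), whence $\dleib\, e(sx\odot sy) = -\tfrac12 s([x,y] - [y,x])$, which already lies in the cogenerator space $s\g$. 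Passing to $H_0(W) = \glie$, where $[y,x] = -[x,y]$, this represents $\mp s[x,y]$, so after desuspension $\ell_2$ induces, up to sign, the Lie bracket of $\glie$. Note that it is precisely the odd symmetrization sign that converts the symmetric Leibniz pairing into the antisymmetric Lie bracket, which is what one needs.

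The hard part will be the coderivation identity of the second paragraph: although routine in principle, it demands careful bookkeeping of the suspension-induced Koszul signs, and it is genuinely stronger than the filtration statement of Theorem \ref{thm:prim_filt_stable}. The one other place needing care is that each $\ell_n$ should land in $\lieopd(s\g)$ rather than merely in $T(s\g)$; here this is automatic, since $\ell_n$ is by construction the corestriction to the cogenerators $\lieopd(s\g)$ of a coderivation of $S^c(\lieopd(s\g))$, but it is exactly the step at which one uses that $\dleib$ respects the primitives (Proposition \ref{prop:lie_stable}) together with the full coalgebra structure, and not only the filtration.
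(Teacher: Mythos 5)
Your proposal is correct, and it takes a genuinely different route from the paper. The paper argues by obstruction theory: it builds the explicit $3$-stub of Proposition \ref{prop:leib_stub}, extends it to a natural $L_\infty$-structure over the subcategory of free Leibniz algebras on finite-dimensional spaces using the acyclicity of the Pirashvili complex in the free case (Theorem \ref{thm:liezation}, fed through Corollary \ref{cor:nat_acyclic}), and then transfers the structure to arbitrary Leibniz algebras via the operadic naturality results of Appendix \ref{sect:naturality}; uniqueness comes from a separate obstruction argument (Proposition \ref{prop:unique}). You instead conjugate $\dleib$ by the PBW coalgebra isomorphism $e : S^*(\lieopd(s\g)) \cong U(\lieopd(s\g)) = T(s\g)$, and everything rests on your key claim that $\dleib$ is a coderivation for the unshuffle coproduct $\Delta$. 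That claim is true — and, as you say, strictly stronger than Theorem \ref{thm:prim_filt_stable} — but you leave it as a heuristic about matching terms; note that it admits a short clean proof by induction on tensor length using the paper's own ingredients: the recursion $\dleib(X \otimes sy) = (\dleib X)\otimes sy + (-1)^{|X|} X\cdot y$ of Proposition \ref{prop:leib_diff} and equation (\ref{eqn:dlie}), multiplicativity of $\Delta$, and the $\glie$-equivariance of $\Delta$ from Lemma \ref{lem:glie_action}; expanding both sides of $\Delta \dleib (X \otimes sy) = (\dleib \otimes \id + \id \otimes \dleib)\Delta(X \otimes sy)$ with these three facts produces identical families of terms, so no brute-force bookkeeping over Loday's sum formula is needed. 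Granting this, the rest of your argument is sound: $D = e^{-1}\dleib e$ is a natural square-zero degree $-1$ coderivation, $D_{1,1} = \dleib$ on $\lieopd(s\g)$ because $e$ restricts to the identity on primitives and $\dleib$ preserves them, and your computation $D_{2,1}\big((sx)(sy)\big) = -\tfrac{1}{2}s(x \lp y)$ descends to minus the canonical bracket on $\glie$, as required. Comparing what each approach buys: yours is explicit and canonical, treats all Leibniz algebras simultaneously, and needs neither Theorem \ref{thm:liezation} nor obstruction theory nor the appendix machinery; the paper's approach additionally delivers the uniqueness clause of Theorem \ref{thm:main} and the exact normalization $d_{2,1}\big((sx)(sy)\big) = -s(x\lp y)$ (your bracket differs from it by the harmless symmetrization factor $\tfrac{1}{2}$), neither of which your construction addresses — though the statement as quoted requires neither.
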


This result gives an algebraic structure on $s^{-1} \lieopd (s \g)$ that extends the antisymmetrization of the Leibniz structure on $\g$ given by $(x,y) \mapsto  x \lp y := xy -yx$.  Given this initial data, the natural $L_\infty$-structure is unique up to natural $L_\infty$-isomorphism, as stated in Theorem \ref{thm:main}.

These results were proved independently of Mostovoy's work \cite{2019arXiv191003754M}.

\subsection{Notation and basics}
The non-negative integers are denoted by $\nat$ (so that $0 \in \nat$).

Throughout $\kring$ is a field of characteristic zero. $\vect$ denotes the category of $\kring$-vector spaces and $\vectf$ the full subcategory of finite-dimensional spaces.

Graded $\kring$-vector spaces are homologically $\zed$-graded (and are usually concentrated in non-negative degrees). The category of graded $\kring$-vector spaces is taken to be symmetric monoidal for the usual tensor product  and with symmetry invoking Koszul signs. 

 For $W$ a $\zed$-graded $\kring$-vector space, $sW$ denotes the homological suspension of $W$ (i.e., $(sW)_{n} = W_{n-1}$, for $n \in \zed$). In particular, for $V$ a $\kring$-vector space (considered as concentrated in homological degree zero), $sV$ is the graded $\kring$-vector space consisting of $V$ placed in homological degree one.  For example, a morphism of degree $-1$ between graded vector spaces $U,V$ is equivalent to a morphism of graded vector spaces (i.e., preserving degree) $U \rightarrow sV$.  

\begin{nota}
For $n \in \nat$, the symmetric group on $n$ letters is denoted $\sym_n$ and the signature representation of $\sym_n$ by $\sgn_n$, which has underlying vector space $\kring$. This is also considered as a $\sym_n\op$-module (i.e., a right $\sym_n$-module).
\end{nota}

\begin{nota}
For $W$ a graded $\kring$-vector space and $n \in \nat$:
\begin{enumerate}
\item 
$T^n (W):= W^{\otimes n}$, the $n$th tensor power, equipped with the action of the symmetric group $\sym_n$ by place permutations (with Koszul signs);
\item 
$T(W) := \bigoplus _{n\in \nat} T^n (W)$; 
\item 
$\Gamma^n (W) :=  T^n (W) ^{\sym_n}$, the $n$th divided power;
\item
$S^n (W):=  T^n (W)_{\sym_n}$, the $n$th symmetric power.
\end{enumerate}
\end{nota}

\begin{rem}
\ 
\begin{enumerate}
\item 
The graded vector space $T(W)$ is the underlying space of the tensor algebra on $W$ (the free, unital associative algebra on the graded $\kring$-vector space $W$).
\item 
The formation of the  invariants $T^n (W) ^{\sym_n}$ and the  coinvariants $ T^n (W)_{\sym_n}$ takes into  account Koszul signs. For example, for $n \in \nat$ and $V$ a $\kring$-vector space (in degree zero), there is a natural isomorphism 
$$
 S^n (sV) 
 \cong
 s^n \Lambda^n (V), 
$$
 where $\Lambda^n (V)$ is the $n$th exterior power.
\end{enumerate}
\end{rem}

The following standard result for modules over a Lie algebra is used: 

\begin{lem}
\label{lem:lie_symm_mon}
For $\h$ a Lie algebra over $\kring$ (concentrated in degree zero), the category of $\zed$-graded right $\h$-modules is a symmetric monoidal category for the graded tensor product, with symmetry invoking Koszul signs; the unit is $\kring$ equipped with the trivial $\h$-module structure.

Explicitly, for right $\h$-modules $M, N$ and $m \in M$, $n\in N$, $h\in \h$, the action on $M \otimes N$ is given by
\[
(m \otimes n) h = mh \otimes n + m \otimes nh.
\]
\end{lem}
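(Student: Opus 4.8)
The plan is to deduce this from the corresponding (ungraded) statement for a cocommutative Hopf algebra, transported to the graded setting. First I would identify the category of $\zed$-graded right $\h$-modules with the category of $\zed$-graded right modules over the universal enveloping algebra $U(\h)$, the associative algebra generated by $\h$ subject to $hh' - h'h = [h,h']$; concretely, the right-module axiom reads $(mh)h' - (mh')h = m[h,h']$, which is precisely the relation needed for a right $U(\h)$-action. Since $\h$ sits in degree zero, $U(\h)$ is concentrated in degree zero and no signs intervene at this stage.

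The key point is that $U(\h)$ is a cocommutative Hopf algebra, with coproduct $\Delta$, counit $\epsilon$, and antipode determined by declaring every $h \in \h$ to be primitive: $\Delta(h) = h \otimes 1 + 1 \otimes h$ and $\epsilon(h) = 0$. For modules over any such Hopf algebra, the tensor product $M \otimes N$ carries the diagonal action through $\Delta$, the ground field carries the trivial action through $\epsilon$, and the braiding is module-linear by cocommutativity. Evaluating the diagonal action on the primitive generators recovers exactly the stated formula $(m\otimes n)h = mh \otimes n + m \otimes nh$. The one computation worth recording is that this formula does define an action: writing out $((m\otimes n)h)h' - ((m\otimes n)h')h$, the two cross terms $mh\otimes nh'$ and $mh'\otimes nh$ cancel, leaving $m[h,h']\otimes n + m\otimes n[h,h'] = (m\otimes n)[h,h']$, as required.

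It then remains to install Koszul signs in the graded setting and to verify the symmetric-monoidal axioms. The unit isomorphisms, the associativity constraint, and the pentagon/hexagon coherences are inherited verbatim from the symmetric monoidal structure on $\zed$-graded $\kring$-vector spaces, since the underlying functors are the usual graded tensor product; one only needs to check that each structure map is $\h$-linear, which for the associator and unitors is immediate from coassociativity and counitality of $\Delta$. The hard part --- really the only place signs matter --- is the symmetry: I would take the braiding to be the Koszul sign map $m \otimes n \mapsto (-1)^{|m||n|} n \otimes m$ and check that it intertwines the two diagonal actions. This reduces to the symmetry of the expression $mh \otimes n + m \otimes nh$ under transposition, together with the fact that $h$ lies in degree zero, so that $|mh| = |m|$ and $|nh| = |n|$ and the Koszul signs appearing in the two terms coincide; it is precisely the degree-zero hypothesis on $\h$ that prevents a discrepancy here. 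This establishes that the braiding is an isomorphism of $\h$-modules, completing the verification.
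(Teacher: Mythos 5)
Your proof is correct. The paper states this lemma without proof, as a standard fact, so there is no argument to compare against; your route through the universal enveloping algebra --- identifying graded right $\h$-modules with graded right $U(\h)$-modules, using that $U(\h)$ is a cocommutative Hopf algebra with primitive generators to get the diagonal action and the module-linearity of the Koszul braiding, and isolating the degree-zero hypothesis as what makes the signs match --- is precisely the standard argument, and it is consistent with the Hopf-algebraic framework the paper itself uses later (e.g.\ in Proposition \ref{prop:CMM} and Lemma \ref{lem:glie_action}).
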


\begin{rem}
The hypothesis that $\h$ is concentrated in degree zero ensures that the  action on the tensor product does not involve Koszul signs.
\end{rem}

\tableofcontents
\part{The primitive filtration}

\section{Background}
\label{sect:background}
 
\subsection{Introducing Leibniz algebras}
This Section recalls the definition of a (right) Leibniz algebra and the construction of the associated Lie algebra.  

\begin{defn}
\label{defn:Leibniz}
A Leibniz algebra  is a $\kring$-vector space $\g$ equipped with a product $\g \otimes \g \rightarrow \g$, $x \otimes y \mapsto xy$, that satisfies the Leibniz relation:
\[
x(yz) = (xy)z - (xz) y.
\]
A morphism of Leibniz algebras $\g_1 \rightarrow \g_2$ is a morphism of $\kring$-modules that is compatible with the respective products.  The category of Leibniz algebras is denote $\leibalg$.  
\end{defn}

A Lie algebra is a Leibniz algebra such that the product is antisymmetric; in particular, there is a fully-faithful inclusion $
\liealg 
\hookrightarrow 
\leibalg
$ of the category $\liealg $ of Lie algebras.  This has left adjoint given by $\g \mapsto \glie$, where
$
\glie := \g / \langle x^2 \rangle_\kring
$, 
for $\langle x^2 \rangle_\kring$ the sub $\kring$-vector space generated by $x^2$,  $x \in \g$. 

\begin{nota}
The image of $y \in \g$ in $\glie$ is written $\overline{y}$.
\end{nota}

\begin{rem}
\label{rem:leibniz_kernel}
For a Leibniz algebra $\g$, the ideal $\langle x^2 \rangle_\kring \subset \g$ is sometimes called the Leibniz kernel of $\g$. It identifies as the image of the restriction of the product of $\g$ to the symmetric invariants $\Gamma^2 \g \subset \g^{\otimes 2}$, so that there is an exact sequence of $\kring$-vector spaces:
\[
\Gamma^2 \g \rightarrow \g \rightarrow \glie \rightarrow 0.
\]
\end{rem}

One has the right regular action of $\glie$ on $\g$:

\begin{prop}
\label{prop:right_reg_action}
\cite[Section 1.10]{LP} \cite[Section  3]{Feld}
For $\g$ a Leibniz algebra, the product $\g \otimes \g \rightarrow \g$,  induces a right action $\g \otimes \glie \rightarrow \g$, $x \otimes \overline{z} \mapsto xz$, of the Lie algebra $\glie$ on $\g$.
\end{prop}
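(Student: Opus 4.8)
The plan is to prove the statement in two steps: first verify that the assignment $x \otimes z \mapsto xz$ descends to the quotient $\glie$ on the second variable, and then check that the resulting map endows $\g$ with the structure of a right module over the Lie algebra $\glie$, in the sense of Lemma \ref{lem:lie_symm_mon}.

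For the first step, recall from Remark \ref{rem:leibniz_kernel} that the Leibniz kernel $\langle x^2 \rangle_\kring$, which is the kernel of the projection $\g \twoheadrightarrow \glie$, is the image of the product restricted to $\Gamma^2 \g$; equivalently, it is the $\kring$-span of the squares $z^2 = zz$ for $z \in \g$. Since the assignment $z \mapsto xz$ is $\kring$-linear in $z$ for each fixed $x$, it suffices to show that $xz^2 = 0$ for all $x, z \in \g$. This follows immediately from the Leibniz relation of Definition \ref{defn:Leibniz} by setting $y = z$, giving $x(zz) = (xz)z - (xz)z = 0$. Hence right multiplication annihilates the Leibniz kernel, so $x \otimes z \mapsto xz$ factors through a well-defined map $\g \otimes \glie \rightarrow \g$, $x \otimes \overline{z} \mapsto xz$.

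For the second step, I would verify the defining identity of a right $\glie$-module, namely $(x \cdot \overline{y}) \cdot \overline{z} - (x \cdot \overline{z}) \cdot \overline{y} = x \cdot [\overline{y}, \overline{z}]$ for $x \in \g$ and $\overline{y}, \overline{z} \in \glie$. Rewriting the Leibniz relation as $(xy)z = x(yz) + (xz)y$ and translating into the action notation yields $(x \cdot \overline{y}) \cdot \overline{z} = x(yz) + (x \cdot \overline{z}) \cdot \overline{y}$, so that the antisymmetrized left-hand side equals $x(yz)$. It then remains to identify $x(yz)$ with $x \cdot [\overline{y}, \overline{z}]$: since the bracket on $\glie$ is induced from the product of $\g$ via $[\overline{y}, \overline{z}] = \overline{yz}$, and the action is defined by $x \cdot \overline{yz} = x(yz)$, this identification is immediate once the first step has guaranteed that the action is well-defined on $\glie$.

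The computation itself is entirely routine given the Leibniz relation; the only point requiring care is matching the sign and variance conventions for a \emph{right} action of a Lie algebra, and confirming that the induced bracket on $\glie$ is $[\overline{y}, \overline{z}] = \overline{yz}$. I therefore expect the main (and only minor) obstacle to be bookkeeping: ensuring that the direction of the action and the stated identity are consistent with the right-module structure recorded in Lemma \ref{lem:lie_symm_mon}.
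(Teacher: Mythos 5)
Your proof is correct. Note that the paper itself gives no proof of this proposition: it is quoted from the literature (Loday--Pirashvili and Feldvoss), so there is no internal argument to compare against; your two-step argument is precisely the standard one those references contain. Your key vanishing $x(z^2) = (xz)z - (xz)z = 0$ is the unpolarized form of the paper's own Lemma \ref{lem:right_action} (which states $x(yz+zy)=0$), and your identification of the Leibniz kernel with the span of squares is exactly Remark \ref{rem:leibniz_kernel}; combined with the rewritten Leibniz relation $(xy)z - (xz)y = x(yz)$ giving the right-module identity $(x\cdot\overline{y})\cdot\overline{z} - (x\cdot\overline{z})\cdot\overline{y} = x\cdot\overline{yz} = x\cdot[\overline{y},\overline{z}]$, this is a complete and correct verification. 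The one ingredient you take as given --- that the Leibniz product descends to the quotient to define the Lie bracket $[\overline{y},\overline{z}] = \overline{yz}$ on $\glie$ --- is indeed part of the paper's setup (the definition of $\glie$ and the adjunction), so relying on it is legitimate.
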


\subsection{The Leibniz and Chevalley-Eilenberg complexes}

This section recalls the  Leibniz complex (or Loday complex) associated to a Leibniz algebra, together with its relationship with the Chevalley-Eilenberg complex of the associated Lie algebra.

\begin{defn}
\label{defn:leib_complex}
For $\g$ a Leibniz algebra, the Leibniz complex of $\g$ is $(T(s \g), \dleib)$, where $\dleib$ is the differential (of degree $-1$) given for $x_1, \ldots , x_n \in \g$ by 
\[
\dleib (sx_1 \otimes \ldots \otimes sx_n) 
= \sum_{1 \leq i< j \leq n}
(-1)^{j+1}
(sx_1 \otimes \ldots \otimes s (x_ix_j) \otimes \ldots \otimes \widehat{sx_j} \otimes \ldots \otimes sx_n) 
,
\]
where $\widehat{s x_j}$ indicates the omission of the term.

The Leibniz homology $HL_* (\g ; \kring)$ with trivial coefficients is the homology $H_* (T(s \g), \dleib)$.
\end{defn}

\begin{prop}
\label{prop:leib_ce}
 \cite{LP} \cite[Section 10.6.4]{MR1600246}
For $\g$ a Leibniz algebra, the symmetrization map $T(-) \twoheadrightarrow S^* (-)$ induces  natural morphisms of chain complexes 
\[
(T(s\g), \dleib) 
\twoheadrightarrow 
(T(s\glie), \dleib) 
\twoheadrightarrow 
\ce(\glie),
\]
where $\ce (\glie) : = (S^* (s \glie),d_{\ce})$ is the Chevalley-Eilenberg complex of $\glie$ and  the first map is induced by the canonical surjection $\g \twoheadrightarrow \glie$.

In homology, the composite induces the natural comparison map $HL_* (\g ;\kring) \rightarrow H_* (\glie; \kring)$ to the Lie algebra homology of $\glie$. 
\end{prop}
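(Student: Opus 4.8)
The plan is to treat the two surjections separately and then assemble them, since the composite is a morphism of complexes as soon as each factor is.

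For the first morphism I would simply invoke functoriality of the Leibniz complex. The formula for $\dleib$ in Definition \ref{defn:leib_complex} is written entirely in terms of the Leibniz product, and a morphism of Leibniz algebras commutes with that product by definition; hence for any morphism $f \colon \g_1 \to \g_2$ in $\leibalg$ the induced map $T(sf)$ satisfies $T(sf)\circ \dleib = \dleib \circ T(sf)$, so $(T(s-),\dleib)$ is a functor from $\leibalg$ to chain complexes. The canonical surjection $\g \twoheadrightarrow \glie$ is such a morphism (here $\glie$ is a Lie algebra regarded as a Leibniz algebra), so it induces a morphism of complexes $(T(s\g),\dleib)\to (T(s\glie),\dleib)$, which is surjective because $T(s-)$ preserves surjections.

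The second morphism carries the real content. Writing $\h := \glie$, I would show that the symmetrization surjection $q \colon T(s\h)\twoheadrightarrow S^*(s\h)$ intertwines $\dleib$ with the Chevalley--Eilenberg differential, i.e. that $q\circ \dleib = d_{\ce}\circ q$, and I would check this square directly on a generator $sx_1\otimes\cdots\otimes sx_n$. Applying $\dleib$ produces a sum over pairs $i<j$ in which $s(x_ix_j)$ replaces the pair, and applying $q$ symmetrizes the result. Transporting $d_{\ce}$ through the identification $S^n(s\h)\cong s^n\Lambda^n(\h)$ of the Remark, the Chevalley--Eilenberg differential is likewise a sum over pairs $i<j$ of product terms, now placed with the exterior/Koszul signs. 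Matching the two expressions is a sign-bookkeeping exercise whose crucial input is that the product on $\h=\glie$ is antisymmetric, $x_jx_i = -x_ix_j$: it is precisely this antisymmetry that makes the single sum over $i<j$ coming from $\dleib$ reproduce, after symmetrization, the full Chevalley--Eilenberg differential without over- or under-counting. I expect this sign comparison to be the main obstacle. Equivalently, one could first show that $\dleib$ preserves $\ker q$ (again using antisymmetry), so that it descends to a differential on $S^*(s\h)$, and then identify that descended differential with $d_{\ce}$ by evaluating on symmetric generators; either way the map is surjective since $q$ is.

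Finally, the composite $(T(s\g),\dleib)\to \ce(\glie)$ is a morphism of complexes, being a composite of the two above. Passing to homology yields $HL_*(\g;\kring)=H_*(T(s\g),\dleib)\to H_*(\ce(\glie))=H_*(\glie;\kring)$, and this is the comparison map essentially by definition: $\ce(\glie)$ computes the Lie algebra homology of $\glie$, and the comparison map of the Introduction is defined to be exactly the map induced by this composite, so no further argument is needed for the last claim.
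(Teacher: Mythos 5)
Your proposal is correct, but note that the paper offers no proof of this Proposition at all: it is quoted from \cite{LP} and \cite[Section 10.6.4]{MR1600246}, and the paper remarks immediately afterwards that, for its purposes, Proposition \ref{prop:leib_ce} can be taken as giving the \emph{definition} of $\ce(\glie)$. Under that reading, the second of the two routes you sketch for the symmetrization map is the relevant one, and it is strictly easier than you anticipate: one only has to check that $\dleib$ on $T(s\glie)$ preserves the kernel of the symmetrization $q$, so that it descends to a differential on $S^*(s\glie)$; the step you single out as the main obstacle --- matching the descended differential against an independently defined $d_{\ce}$ --- is then vacuous, since the descended differential is $d_{\ce}$ by definition, and likewise the final statement about the comparison map is definitional (as you note). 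Your identification of antisymmetry as the crucial input for the descent is exactly right: for an adjacent (signed) transposition of $sx_k \otimes sx_{k+1}$, the terms of $\dleib$ that bracket the pair $(k,k+1)$ match only because $x_{k+1}x_k = -x_k x_{k+1}$ compensates the Koszul sign, while all other terms agree after symmetrization for purely formal reasons (they differ by a place permutation with its Koszul sign). Your first route --- the full sign comparison with the classical Chevalley--Eilenberg formula --- is the proof carried out in the cited references; it establishes the stronger statement that the quotient differential coincides with the usual $d_{\ce}$, at the cost of exactly the sign bookkeeping that the paper's definitional stance deliberately avoids. Functoriality of $(T(s-),\dleib)$ and surjectivity you handle correctly, so there is no gap.
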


For current purposes, Proposition \ref{prop:leib_ce}  can be taken as giving the  definition of $\ce(\glie)$.

\subsection{Free Leibniz and free Lie algebras}

This Section recalls the homology of free Leibniz (respectively Lie) algebras. (This is used in Section \ref{subsect:free}.)

\begin{nota}
For $V$ a $\kring$-vector space, denote by
\begin{enumerate}
\item 
$\leibopd (V)$ the free Leibniz algebra on $V$; 
\item 
$\lieopd (V)$ the free Lie algebra on $V$. 
\end{enumerate}
\end{nota}

\begin{rem}
Both Leibniz and Lie algebras are encoded by operads and the above free functors arise from this framework \cite{LV} (see \cite[Section 13.2]{LV} for the Lie operad and \cite[Section 13.5]{LV} for the Leibniz operad).  In particular this implies that one can pass to the graded setting: for instance, the free Lie algebra $\lieopd (sV)$ on the suspension of a $\kring$-vector space is of importance here. 
\end{rem}

Since a Lie algebra is, in particular, a Leibniz algebra, there is a canonical morphism 
$
\leibopd (V) \rightarrow \lieopd(V)$ 
 of Leibniz algebras. More precisely, as a formal consequence of the adjunctions underlying the constructions, one has:

\begin{lem}
\label{lem:liezation_free_leib}
For $V$ a $\kring$-vector space, the canonical map $\leibopd (V) \rightarrow \lieopd(V)$ induces a natural 
 isomorphism 
$ 
(\leibopd(V))_{\mathrm{Lie}} \cong \lieopd (V)$
 of Lie algebras.
\end{lem}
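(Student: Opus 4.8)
The plan is to deduce the isomorphism formally from the three adjunctions that underlie the constructions, by composing left adjoints and appealing to the uniqueness of adjoints. The point of Lemma \ref{lem:liezation_free_leib} is entirely categorical, so I would avoid any manipulation of explicit bases for free algebras.

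First I would record the relevant adjunctions. The free Leibniz algebra functor $\leibopd(-) \colon \vect \to \leibalg$ is left adjoint to the forgetful functor $\leibalg \to \vect$; the Liezation functor $(-)_{\mathrm{Lie}} \colon \leibalg \to \liealg$ is left adjoint to the fully-faithful inclusion $\liealg \hookrightarrow \leibalg$; and the free Lie algebra functor $\lieopd(-) \colon \vect \to \liealg$ is left adjoint to the forgetful functor $\liealg \to \vect$. The key observation is then that the forgetful functor $\liealg \to \vect$ factors as the inclusion $\liealg \hookrightarrow \leibalg$ followed by the forgetful functor $\leibalg \to \vect$; indeed, both descriptions send a Lie algebra to its underlying $\kring$-vector space, since neither the inclusion nor passage to the underlying space alters that space.

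Since a composite of left adjoints is left adjoint to the composite of the corresponding right adjoints (in the reverse order), the functor $(-)_{\mathrm{Lie}} \circ \leibopd(-) \colon \vect \to \liealg$ is left adjoint to the forgetful functor $\liealg \to \vect$. By uniqueness of left adjoints up to canonical natural isomorphism, there is therefore a natural isomorphism $(\leibopd(V))_{\mathrm{Lie}} \cong \lieopd(V)$ of Lie algebras, natural in $V$.

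The one point requiring any care — and the part I expect to be the main (though still routine) obstacle — is verifying that this abstract comparison isomorphism coincides with the map induced by the \emph{canonical} morphism $\leibopd(V) \to \lieopd(V)$. I would check this by chasing units and counits: the canonical map is adjoint to the inclusion $V \to \lieopd(V)$ under the free-forgetful adjunction for Leibniz algebras, and since $\lieopd(V)$ is a Lie algebra it factors uniquely through the universal map $\leibopd(V) \to (\leibopd(V))_{\mathrm{Lie}}$. The resulting map $(\leibopd(V))_{\mathrm{Lie}} \to \lieopd(V)$ is precisely the comparison furnished by the composite adjunction, hence the isomorphism above; everything else is a formal consequence of uniqueness of adjoints.
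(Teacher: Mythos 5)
Your proposal is correct and follows exactly the route the paper intends: the paper presents this Lemma as "a formal consequence of the adjunctions underlying the constructions," which is precisely your argument that $(-)_{\mathrm{Lie}} \circ \leibopd(-)$ and $\lieopd(-)$ are both left adjoint to the forgetful functor $\liealg \rightarrow \vect$, hence canonically isomorphic. Your final paragraph identifying the abstract comparison isomorphism with the canonical map $\leibopd(V) \rightarrow \lieopd(V)$ supplies the one detail the paper leaves implicit, and does so correctly.
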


The Leibniz homology of free Leibniz algebras and the Lie homology of free Lie algebras are well-understood:

\begin{prop}
\label{prop:hom_free_algebras}
For $V$ a $\kring$-vector space, there are canonical isomorphisms:
\begin{eqnarray*}
HL_n (\leibopd(V); \kring)
 &\cong &
 \left\{
\begin{array}{ll}
\kring & n=0 \\
V & n=1 \\
0 & n >1
\end{array}
\right.
\\
H_n (\lieopd(V); \kring) &\cong &
 \left\{
\begin{array}{ll}
\kring & n=0 \\
V & n=1 \\
0 & n >1.
\end{array}
\right.
\end{eqnarray*}

Moreover, the comparison map
$HL_* (\leibopd(V); \kring)
\rightarrow 
H_* (\lieopd (V); \kring)$ 
 associated to $\leibopd (V) \twoheadrightarrow \leibopd(V) _{\mathrm{Lie}} \cong \lieopd (V)$
is an isomorphism.
\end{prop}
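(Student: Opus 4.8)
The plan is to establish the two homology computations independently and then read off that the comparison map is an isomorphism from the shape of the answers. For the free Lie algebra I would pass to the universal enveloping algebra. Since $U \lieopd(V) \cong T(V)$, the tensor algebra, one has $H_*(\lieopd(V); \kring) \cong \mathrm{Tor}^{T(V)}_*(\kring, \kring)$. The augmentation ideal of $T(V)$ is free as a left $T(V)$-module on $V$, which furnishes the short free resolution
\[
0 \to T(V) \otimes V \to T(V) \to \kring \to 0 .
\]
Applying $\kring \otimes_{T(V)} (-)$ gives the complex $V \to \kring$ whose differential vanishes (it factors through the augmentation ideal), so $\mathrm{Tor}^{T(V)}_*(\kring,\kring)$ is $\kring$ in degree $0$, $V$ in degree $1$, and $0$ in higher degrees.

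For the free Leibniz algebra the corresponding statement is that the reduced Leibniz complex of $\leibopd(V)$ is acyclic except in degree $1$, where its homology is $V$. I would obtain this from the Koszulity of the Leibniz operad \cite{LV}: the Leibniz complex $(T(s\g), \dleib)$ is, up to the suspension convention, the Koszul complex computing the operadic homology of $\g$, and the operadic homology of a free algebra $\leibopd(V)$ is concentrated on the generators $V$. Alternatively one can argue as in the Lie case via the Loday--Pirashvili enveloping algebra \cite{LP}, reducing the free case to a short free resolution. In any event the low-degree terms are immediate: $HL_0 = \kring$ and $HL_1 = \leibopd(V)/\bigl(\leibopd(V)\cdot \leibopd(V)\bigr) \cong V$, the space of indecomposables.

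It then remains to identify the comparison map. Since both $HL_*(\leibopd(V);\kring)$ and $H_*(\lieopd(V);\kring)$ are concentrated in degrees $0$ and $1$, it suffices to check these two degrees. In degree $0$ both groups are $\kring$ and the map is the identity. In degree $1$ the map is induced by $\leibopd(V) \twoheadrightarrow \lieopd(V)$ on the respective abelianizations; using Lemma \ref{lem:liezation_free_leib} both are canonically $V$, and the induced map is again the identity on $V$. Hence the comparison map is an isomorphism in every degree.

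The main obstacle is the Leibniz vanishing in degrees $\geq 2$: in contrast with the Lie case there is no equally short classical free resolution immediately at hand, so the cleanest route is through operadic Koszulity or the Loday--Pirashvili enveloping algebra, and some care is required to match the grading and suspension conventions of the Leibniz complex with those of the operadic bar/Koszul construction.
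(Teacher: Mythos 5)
Your proposal is correct, and it is a hybrid of the paper's route and a more hands-on one. For the Leibniz vanishing you do exactly what the paper does: invoke Koszulity of the Leibniz operad and the criterion of \cite[Theorem 12.1.4]{LV}, which says Koszulity is equivalent to the homology of the Koszul complex of a free algebra being concentrated on the generators (your caveat about matching suspension conventions between $(T(s\g),\dleib)$ and the operadic Koszul complex is the only real work hidden there). For the Lie computation, however, the paper again just cites Koszulity of the Lie operad, whereas you give a genuinely different and more elementary argument: identify $H_*(\lieopd(V);\kring)$ with $\mathrm{Tor}^{T(V)}_*(\kring,\kring)$ via $U\lieopd(V)\cong T(V)$ and use the two-term free resolution $0\to T(V)\otimes V\to T(V)\to\kring\to 0$ coming from the freeness of the augmentation ideal; this is complete and avoids operad theory entirely on that side (it does use the classical identification of Chevalley--Eilenberg homology with $\mathrm{Tor}$ over the enveloping algebra, which is standard but not stated in the paper). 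Finally, your low-degree check that the comparison map is the identity on $\kring$ in degree $0$ and on the indecomposables $V$ in degree $1$ — using that the surjection $\leibopd(V)\twoheadrightarrow\lieopd(V)$ restricts to the identity on generators — supplies a verification that the paper's one-line proof leaves entirely implicit, so your write-up is in this respect more complete than the original.
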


\begin{proof}
One way to see this is within the general framework of algebraic operads \cite{LV}, as follows. Both the Leibniz operad and the Lie operad are binary quadratic Koszul operads; this  is equivalent to the given behaviour on free algebras (see \cite[Theorem 12.1.4]{LV}).
\end{proof}

\section{Pirashvili's subcomplex}
\label{sect:subcomplex}

This Section gives a presentation of the construction of Pirashvili's subcomplex $(\lieopd (s\g), \dleib)$ of the Leibniz complex $(T(s \g), \dleib)$ of a Leibniz algebra $\g$, that was introduced in \cite{2019arXiv190400121P}. The method used will be generalized in Section \ref{sect:prim} to study the primitive filtration of $T(s \g)$.

\subsection{Reconstructing the Leibniz differential}

For $V$ a $\kring$-vector space, the tensor algebra $T(sV)$ has the structure of a primitively-generated Hopf algebra (working in the graded setting with Koszul signs). The coproduct $\Delta : T(sV) \rightarrow T(sV) \otimes T(sV)$ is determined by the fact that the subspace $sV$ lies in the primitives; it identifies explicitly as the shuffle coproduct (with Koszul signs). 

The  Cartier-Milnor-Moore theorem (see \cite{MM} and \cite[Theorem B.4.5]{MR258031}, for example) gives:

\begin{prop}
\label{prop:CMM}
The primitives of the Hopf algebra $T(sV)$ identify as $\lieopd (sV) \subset T(sV)$, the free Lie algebra on $sV$. Moreover, the canonical map 
$
U (\lieopd (sV) ) \stackrel{\cong}{\rightarrow} T(sV)
$ 
from the universal enveloping algebra of $\lieopd (sV)$ is an isomorphism of Hopf algebras.
\end{prop}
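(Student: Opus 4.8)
The plan is to recognize this as the instance of the Cartier–Milnor–Moore theorem for the graded Hopf algebra $T(sV)$ and to pin down its primitives using the universal properties of the free functors involved. The two assertions are closely linked, and I would prove the second (that the canonical map is a Hopf algebra isomorphism) first, deducing the identification of the primitives from it.

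To establish the isomorphism $U(\lieopd(sV)) \stackrel{\cong}{\rightarrow} T(sV)$, I would first note that the underlying isomorphism of associative algebras is formal. The functor $T(-)$ is the free associative $\kring$-algebra functor, hence left adjoint to the functor sending an associative algebra to its underlying vector space in $\vect$. That forgetful functor factors as the underlying-vector-space functor of $\liealg$ composed with the commutator functor from associative algebras to $\liealg$; the respective left adjoints of these two functors are $\lieopd(-)$ and $U$. Composing left adjoints yields a natural isomorphism $T(-) \cong U(\lieopd(-))$ of functors into associative algebras, which on $sV$ is exactly the canonical map in question. To upgrade this to an isomorphism of Hopf algebras, I would observe that both sides are generated as algebras by the copy of $sV$, that $sV$ is primitive on each side (by the defining property of the shuffle coproduct on $T(sV)$, and because $sV \subset \lieopd(sV)$ consists of primitives in $U(\lieopd(sV))$), and that a coproduct on a Hopf algebra generated by primitive elements is determined by its values on those generators; hence the two coproducts correspond under the algebra isomorphism.

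It then remains to identify the primitives. On the one hand, $sV$ consists of primitive elements and the primitives of any Hopf algebra form a Lie subalgebra under the commutator bracket, so the universal property of $\lieopd(sV)$ produces a Lie algebra map $\lieopd(sV) \rightarrow \mathrm{Prim}(T(sV))$ compatible with the inclusion $\lieopd(sV) \hookrightarrow T(sV)$ from the previous step; in particular this map is injective. On the other hand, for any Lie algebra $\h$ over a field of characteristic zero, the Poincaré–Birkhoff–Witt theorem gives $\mathrm{Prim}(U(\h)) = \h$; applying this to $\h = \lieopd(sV)$ and transporting along the Hopf isomorphism shows that $\mathrm{Prim}(T(sV))$ is precisely the image of $\lieopd(sV)$, completing the identification.

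The substantive input, and the main obstacle, is the characteristic-zero hypothesis: both the surjectivity in the computation $\mathrm{Prim}(U(\h)) = \h$ and (equivalently) the Cartier–Milnor–Moore theorem fail in positive characteristic, where further primitives of Frobenius type appear. Granting the classical graded statements of PBW and CMM over $\kring$, the remaining verifications — cocommutativity of the shuffle coproduct and the agreement of the two coproducts on the primitive generators — are routine, and the grading by tensor length (which coincides with homological degree, since $sV$ is concentrated in degree one) supplies the connectedness needed to invoke the theorem.
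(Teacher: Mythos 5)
Your proposal is correct, but it takes a genuinely different route from the paper, which gives no argument at all: the paper simply records the statement as an instance of the classical Cartier--Milnor--Moore theorem, citing Milnor--Moore and Quillen. You instead supply an actual proof, and one that is leaner than a full appeal to CMM. The algebra isomorphism $U(\lieopd(sV)) \cong T(sV)$ falls out of uniqueness of left adjoints (both sides are left adjoint to the forgetful functor from associative algebras to $\vect$, the right-hand side via the factorization of that forgetful functor through $\liealg$ by the commutator functor); the Hopf compatibility follows because both coproducts are algebra maps agreeing on the primitive generators $sV$; and the identification of the primitives then needs only the graded PBW consequence $\mathrm{Prim}(U(\h)) = \h$ in characteristic zero, not the full Milnor--Moore structure theorem for connected graded cocommutative Hopf algebras. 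Indeed, although you say you ``grant PBW and CMM,'' your argument never actually invokes CMM: exploiting the freeness of $T(sV)$ replaces the structure theorem by formal adjunction yoga, which is what makes your route more elementary. What the paper's approach buys is brevity and a standard reference; what yours buys is a self-contained argument with the characteristic-zero hypothesis isolated exactly where it is needed, namely in the computation of $\mathrm{Prim}(U(\h))$, which fails in positive characteristic. The one point to keep in view --- and you do note it --- is that all of these adjunctions, PBW, and the coproduct computations must be taken in the symmetric monoidal category of graded vector spaces with Koszul signs, since $sV$ sits in odd degree; this is standard but should be said.
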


\begin{rem}
The free Lie algebra functor $\lieopd (-)$ has a weight grading, so that $\lieopd (-) = \bigoplus_{n \geq 1} \lieopd_n (-)$,  with $\lieopd_n (-) \subset T^n (-)$. (This follows, for instance, from the fact that Lie algebras are encoded by an operad.)

Consider the graded situation of Proposition \ref{prop:CMM}. In small degrees, $\lieopd_1 (sV) = sV = T^1 (sV)$ and $\lieopd_2 (sV) \cong s^2 \Gamma^2 (V) \subset s^2 T^2(V) \cong T^2(sV)$.
 The homological suspension leads to the  relation $[sx , sy] = [sy,sx]$ for $x, y \in V$, which accounts for the appearance of the functor $\Gamma^2$ (as opposed to the second exterior power $\Lambda^2$ which arises when considering $\lieopd_2 (V)$).
\end{rem}

\begin{nota}
\label{nota:ld}
Let $\ld : T(sV) \rightarrow sV \otimes T(sV)$ be the component of the coproduct $\Delta : T(sV) \rightarrow T(sV) \otimes T(sV)$ mapping to terms linear in the first tensor factor. Explicitly, on $T^n (sV)$, for $1\leq n \in \nat$:
\[
\ld (sx_1 \otimes \ldots \otimes sx_n) 
= 
\sum_{i=1}^n
(-1)^{i+1} 
sx_i \otimes \big (
sx_1 \otimes \ldots \otimes \widehat{sx_i} \otimes \ldots \otimes sx_n
\big) \in sV \otimes T^{n-1} (sV).
\]
\end{nota}

The following Lemma is a direct consequence of Proposition \ref{prop:CMM}, since $\lieopd (sV)$ corresponds to the primitives of $T(sV)$:

\begin{lem}
\label{lem:ld_lie}
The restriction of $\ld$ to $\lieopd_n (sV)$ is trivial for $1<n \in \nat$. On the linear terms $sV = \lieopd_1 (sV)  \subset \lieopd (sV)$ it identifies as the inclusion $sV \cong sV \otimes \kring \subset sV \otimes T(sV)$.
\end{lem}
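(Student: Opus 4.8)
The plan is to read off both assertions directly from the primitivity characterization in Proposition \ref{prop:CMM}, combined with the weight grading on $T(sV)$. First I would record that $\ld$ is nothing but the projection of the coproduct $\Delta$ onto the summand $sV \otimes T(sV) = T^1(sV) \otimes T(sV)$, that is, the component in which the first tensor factor has tensor weight exactly one. This is immediate from Notation \ref{nota:ld}: the explicit formula there extracts precisely those terms of the shuffle coproduct whose left-hand factor is a single generator $sx_i$ (with the appropriate Koszul sign), which is exactly the image of $\Delta$ under projection to weight one in the first variable.

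Next I would fix a homogeneous element $p \in \lieopd_n(sV)$ of weight $n$. By Proposition \ref{prop:CMM}, $\lieopd(sV)$ is the space of primitives of $T(sV)$, so $p$ satisfies $\Delta(p) = p \otimes 1 + 1 \otimes p$. Applying the projection described above, I would track the weight of the first tensor factor in each of these two terms: in $p \otimes 1$ the first factor has weight $n$, while in $1 \otimes p$ the first factor is the unit, of weight $0$. Hence the only way either term survives the projection onto $T^1(sV) \otimes T(sV)$ is for $n = 1$, via the summand $p \otimes 1$.

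This immediately yields both claims. For $1 < n \in \nat$ neither summand contributes, so $\ld$ restricts to zero on $\lieopd_n(sV)$. For $n = 1$ one has $\lieopd_1(sV) = sV$, and for $sx \in sV$ the projection of $\Delta(sx) = sx \otimes 1 + 1 \otimes sx$ retains only $sx \otimes 1 \in sV \otimes T^0(sV) = sV \otimes \kring$, which is exactly the inclusion $sV \cong sV \otimes \kring \subset sV \otimes T(sV)$. There is no real obstacle here: the whole argument rests on the identification of $\lieopd(sV)$ with the primitives and on bookkeeping of the weight grading. The one point requiring a moment's care is confirming that $\ld$ is genuinely the weight-one-in-the-first-factor projection of $\Delta$, rather than some other piece, which one verifies by comparing the explicit formula for $\ld$ with that of the shuffle coproduct.
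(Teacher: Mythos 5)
Your proof is correct and follows essentially the same route as the paper: the paper simply declares the Lemma a direct consequence of Proposition \ref{prop:CMM}, the implicit argument being exactly yours, namely that a primitive $p$ of weight $n$ has $\Delta(p) = p \otimes 1 + 1 \otimes p$, whose component in $sV \otimes T(sV)$ vanishes for $n > 1$ and reduces to $p \otimes 1$ for $n = 1$. Your additional verification that $\ld$ agrees with the weight-one-in-the-first-factor projection of the shuffle coproduct is the right point of care, and it is consistent with the explicit formula in Notation \ref{nota:ld}.
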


Consider a Leibniz algebra $\g$ and its associated Lie algebra $\glie$, so that $\glie$ acts on $\g$ via the right regular action (see Proposition \ref{prop:right_reg_action}). Hence $\glie$ acts on the right on $s\g$ via $ (sx) \otimes \overline{z} \mapsto s (xz)$ and this action extends for $n\in \nat$ to a right action of $\glie$ on $T^n (s\g) = (s\g)^{\otimes n}$, using the monoidal structure of Lemma \ref{lem:lie_symm_mon}.

This defines the right action of $\glie$ on $T(s\g)$, and hence that of $\g$, via the composite 
$T(s\g) \otimes \g \twoheadrightarrow T (s\g) \otimes \glie \rightarrow T(s\g)$, where the first map is induced by the canonical surjection $\g \twoheadrightarrow \glie$.  

\begin{nota}
\label{nota:rr_action_T}
Write $X \otimes y \mapsto X \cdot y$ for the right action $T (s\g) \otimes \g \rightarrow T (s\g)$, where $X \in T(s\g) $ and $y \in \g$. 
\end{nota}

\begin{lem}
\label{lem:glie_action}
\ 
\begin{enumerate}
\item 
The right regular action of $\glie$ on $\g$ equips $T(s\g)$ with the structure of a cocommutative Hopf algebra in the category of right $\glie$-modules.
\item 
$\lieopd (s\g) \subset T(s\g) $ is stable under the $\glie$ action, hence defines a sub $\glie$-module.
\item 
The Lie bracket $[-,-] \ : \  \lieopd (s\g) \otimes \lieopd (s\g) \rightarrow \lieopd (s\g)$ is a morphism of $\glie$-modules, where the domain is given the tensor product $\glie$-module structure of Lemma \ref{lem:lie_symm_mon}.
\end{enumerate}
\end{lem}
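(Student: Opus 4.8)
The plan is to establish all three assertions at once by working internally to the symmetric monoidal category of $\zed$-graded right $\glie$-modules furnished by Lemma \ref{lem:lie_symm_mon}, and exploiting the fact that the forgetful functor to graded $\kring$-vector spaces is (strictly) symmetric monoidal. The suspension $s\g$ is an object of this category via the right regular action of Proposition \ref{prop:right_reg_action} transported along $s$, and by construction the $\glie$-action on $T^n(s\g) = (s\g)^{\otimes n}$ is precisely the $n$-fold tensor power of this module; since $\glie$ sits in degree zero, no Koszul signs intervene and $\overline{z}$ acts by the derivation $\sum_{i} \id^{\otimes(i-1)} \otimes \rho(\overline{z}) \otimes \id^{\otimes(n-i)}$, where $\rho$ is the action on $s\g$.

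For part (1), I would observe that the tensor algebra, together with its concatenation product, (shuffle) coproduct, unit, counit and antipode, is assembled using only the symmetric monoidal structure of the ambient category; hence the construction may be carried out internally to right $\glie$-modules, producing a cocommutative Hopf algebra object there. Because the forgetful functor is symmetric monoidal, applying it recovers the standard primitively-generated Hopf algebra $T(s\g)$ of Section \ref{sect:subcomplex}, now with every structure map realized as a morphism of $\glie$-modules. Concretely, the two compatibilities that matter are that the concatenation product is a derivation for the action -- which is immediate from the tensor-product module structure of Lemma \ref{lem:lie_symm_mon}, since $(X Y)\cdot \overline{z} = (X\cdot\overline{z}) Y + X (Y \cdot \overline{z})$ -- and that the coproduct is equivariant.

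Parts (2) and (3) then follow formally. The primitives of a Hopf algebra object are the kernel of the reduced coproduct $\overline{\Delta}$; as $\overline{\Delta}$ is a morphism of $\glie$-modules by (1), its kernel is a sub-$\glie$-module, and by Proposition \ref{prop:CMM} this kernel is exactly $\lieopd(s\g)$, which gives (2). For (3), the Lie bracket is the graded commutator $[a,b] = ab - (-1)^{|a||b|} ba$, built from the product and the symmetry of the monoidal category, both of which are $\glie$-module morphisms; the primitives are closed under this bracket, so its restriction to $\lieopd(s\g)^{\otimes 2}$ is a morphism of $\glie$-modules landing in $\lieopd(s\g)$. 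I expect the one genuine obstacle to be the equivariance of the coproduct in (1): checked abstractly it is automatic, but a direct verification amounts to confirming that $\Delta \circ D_{\overline{z}}$ and $(D_{\overline{z}} \otimes \id + \id \otimes D_{\overline{z}}) \circ \Delta$ agree, which -- both being twisted derivations over $\Delta$ that coincide on the generators $s\g$ by the primitivity of $s(xz)$ -- reduces to a sign bookkeeping over unshuffles, the sole computational point underlying the whole lemma.
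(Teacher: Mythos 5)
Your proposal is correct and takes essentially the same approach as the paper: the paper's proof also works internally to the symmetric monoidal category of right $\glie$-modules from Lemma \ref{lem:lie_symm_mon}, deduces (2) from the fact that the primitives are a limit computed in that category (its equalizer description coincides with your kernel of the reduced coproduct), and obtains (3) from the equivariance of the product and the symmetry. Your additional remarks on verifying coproduct equivariance by comparing two $\Delta$-derivations on generators are a sound elaboration of what the paper leaves implicit.
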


\begin{proof}
The first statement is an immediate consequence of Proposition \ref{prop:right_reg_action} (using the symmetric monoidal structure given by Lemma \ref{lem:lie_symm_mon}).  

The second statement then follows, since $\lieopd(s \g) $ identifies as the primitives of the Hopf algebra structure on $T(s \g)$ by Proposition \ref{prop:CMM} and these primitives are constructed as an equalizer of a diagram that is defined in right $\glie$-modules.

The final statement follows by again using the symmetric monoidal structure given by Lemma \ref{lem:lie_symm_mon}. 
\end{proof}

The Lie action can be used to (re)construct the differential of the Leibniz complex $(T(s\g), \dleib)$ (cf. Definition \ref{defn:leib_complex}) as follows.

\begin{nota}
For $n \in \nat$, let $\dlie_{n,1}$ be the $\kring$-linear map (of degree $-1$)
\[
(s \g)^{\otimes n+1} 
\cong 
(s \g)^{\otimes n} \otimes s\g 
\cong s \big ( (s \g)^{\otimes n} \otimes \g \big)
\twoheadrightarrow 
  s \big ( (s \g)^{\otimes n} \otimes \glie \big)
  \rightarrow 
  s(s \g)^{\otimes n} 
  \stackrel{\cong}{\rightarrow}
  (s \g)^{\otimes n} 
  \]
  where the penultimate map is given by  the $\glie$-action, the final map is the canonical isomorphism of degree $-1$  and the second isomorphism introduces the sign $(-1)^n$ via the Koszul rule.
\end{nota}

\begin{rem}
\ 
\begin{enumerate}
\item 
Observe that  $\dlie_{0,1}$ is zero, since the $\glie$-action on $\kring$ is trivial. The map $\dlie_{1,1} : s\g \otimes s\g \rightarrow s\g$ is given by  $sx \otimes sy \mapsto - s(xy)$, for $x, y \in \g$.
\item 
Using Notation \ref{nota:rr_action_T}, the action of $\dlie_{n,1}$ for $X \in T^n (s\g)$ and $y \in \g$ is given by 
\begin{eqnarray}
\label{eqn:dlie}
\dlie_{n,1} (X \otimes sy) = (-1)^{n} X \cdot y,
\end{eqnarray}
which highlights the relation between $\dlie_{n,1}$ and the right action of $\g$ on $T(s\g)$.
\end{enumerate}
\end{rem}

\begin{prop}
\label{prop:leib_diff}
The Leibniz differential $(\dleib)_{n+1} : (s \g)^{\otimes n+1} \rightarrow (s\g)^{\otimes n}$, $n \in \nat$,
 satisfies $(\dleib)_0=0$ and the recursive relation 
  \[
  (\dleib)_{n+1} := ((\dleib)_n \otimes \id) + \dlie_{n,1}.
  \]
 Explicitly, for $X \in (s\g)^{\otimes n} \subset T(s\g)$ and $y \in \g$, 
\[
\dleib (X \otimes sy)  = (\dleib X) \otimes sy + \dlie_{n,1} (X \otimes sy).
\] 
\end{prop}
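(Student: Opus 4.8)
The plan is to verify the identity directly on the standard generators $sx_1 \otimes \ldots \otimes sx_{n+1}$ of $(s\g)^{\otimes n+1}$, which suffices since every map involved is $\kring$-linear. The organizing idea is to partition the double sum defining $\dleib$ on $T^{n+1}(s\g)$ according to whether the larger index $j$ in each pair $(i,j)$, $1 \leq i < j \leq n+1$, satisfies $j \leq n$ or $j = n+1$. Writing $X = sx_1 \otimes \ldots \otimes sx_n$ and $y = x_{n+1}$, I would split
\[
\dleib(sx_1 \otimes \ldots \otimes sx_{n+1}) = \sum_{1\leq i<j\leq n}(\ldots) + \sum_{1\leq i \leq n,\ j = n+1}(\ldots).
\]

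For the first subsum (where $j \leq n$), the final tensor factor $sx_{n+1}$ is never touched: both the contraction $s(x_ix_j)$ and the omission $\widehat{sx_j}$ occur among the first $n$ slots, and the sign $(-1)^{j+1}$ is unchanged. Hence each such term factors as a summand of $\dleib(X)$ tensored with $sx_{n+1}$, so this subsum is exactly $((\dleib)_n \otimes \id)(X \otimes sy)$. For the second subsum I would note that when $j = n+1$ the pair is $(i,n+1)$ with $1\leq i \leq n$, the sign is $(-1)^{(n+1)+1} = (-1)^n$, the factor $sx_{n+1}$ is omitted and $sx_i$ is replaced by $s(x_i x_{n+1})$, giving $(-1)^n \sum_{i=1}^n sx_1 \otimes \ldots \otimes s(x_i x_{n+1}) \otimes \ldots \otimes sx_n$. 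To recognize this as $\dlie_{n,1}(X\otimes sy)$ I would unwind the right action: by Lemma \ref{lem:lie_symm_mon} the $\glie$-action on $(s\g)^{\otimes n}$ is the derivation $\sum_i(\text{act on the } i\text{th factor})$, and on a single factor $(sx_i)\cdot \overline{x_{n+1}} = s(x_i x_{n+1})$ by the right regular action of Proposition \ref{prop:right_reg_action}; thus $X \cdot x_{n+1} = \sum_{i=1}^n sx_1 \otimes \ldots \otimes s(x_i x_{n+1}) \otimes \ldots \otimes sx_n$, and comparison with \eqref{eqn:dlie} yields the second subsum $= (-1)^n X\cdot y = \dlie_{n,1}(X\otimes sy)$. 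Combining the two subsums gives the stated recursion, and the base case $(\dleib)_0 = 0$ is the empty sum.

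The computation is essentially bookkeeping, and the main (minor) obstacle is the sign match: one must confirm that the Koszul sign $(-1)^n$ built into $\dlie_{n,1}$ — arising, as recorded in \eqref{eqn:dlie}, from commuting the suspension $s$ past $(s\g)^{\otimes n}$ in the definition of $\dlie_{n,1}$ — coincides with the combinatorial sign $(-1)^{(n+1)+1}$ attached to the $j = n+1$ terms of $\dleib$. Once this agreement is checked, the two identifications fit together to give the explicit formula $\dleib(X \otimes sy) = (\dleib X)\otimes sy + \dlie_{n,1}(X\otimes sy)$.
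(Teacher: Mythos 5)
Your proposal is correct and follows essentially the same route as the paper's proof, which is a direct verification from the explicit formula in Definition \ref{defn:leib_complex} whose key point is precisely the observation you make: the sign $(-1)^{j+1}$ of the term indexed by $i<j$ depends only on $j$, so the $j\leq n$ terms assemble into $(\dleib X)\otimes sy$ while the $j=n+1$ terms carry the uniform sign $(-1)^n$ matching the Koszul sign built into $\dlie_{n,1}$. Your unwinding of the right regular action via Lemma \ref{lem:lie_symm_mon} and equation (\ref{eqn:dlie}) correctly fills in the details the paper leaves implicit.
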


\begin{proof}
This is proved by a direct verification using the explicit form of the Leibniz differential  given in Definition \ref{defn:leib_complex}. The key point is that the sign appearing for the term indexed by $i<j$ only depends upon $j$.
\end{proof}

\begin{rem}
\label{rem:dleib_glie-linear}
Using  the relation $\dleib^2=0$, Proposition \ref{prop:leib_diff} gives the  
 anti-commutation relation between $\dleib$ and $\dlie$:
\[
 \dleib \dlie_{n,1} (X \otimes sy) + \dlie_{n-1, 1} ((\dleib X) \otimes sy) =0,
\]
This is equivalent to the fact that the Leibniz differential $\dleib$ is $\glie$-linear for the right regular action. 
\end{rem}

The map $\dlie_{n,1}$ extends:

\begin{defn}
\label{defn:dlie_nt}
For $n, t \in \nat$, let $\dlie_{n,t} : (s\g)^{\otimes n +t} 
\rightarrow (s\g)^{\otimes n + t-1}$ be the composite (of degree $-1$): 
\[
(s\g)^{\otimes n +t}
\cong 
(s\g)^{\otimes n} \otimes (s\g)^{\otimes t} 
\stackrel{\id \otimes \ld}{\longrightarrow}
(s\g)^{\otimes n} \otimes s \g \otimes (s\g)^{\otimes t-1}
\stackrel{\dlie_{n,1} \otimes \id}{\longrightarrow}
 (s\g)^{\otimes n} \otimes (s\g)^{\otimes t-1}
 \cong (s\g)^{\otimes n + t-1}
\]
if $t>0$ and zero for $t=0$.
 \end{defn}

Proposition \ref{prop:leib_diff} generalizes to:

\begin{prop}
\label{prop:diff_dlie}
For $n,t \in \nat$, the Leibniz differential 
$
(\dleib)_{n+t} : 
(s\g)^{\otimes n+t } 
\cong 
(s\g)^{\otimes n} \otimes (s\g)^{\otimes t} 
\rightarrow (s\g)^{\otimes n + t-1}
$  
identifies as 
\[
(\dleib)_{n+t} = (\dleib)_n \otimes \id \ + \ (-1)^n \id \otimes (\dleib)_t \ + \ \dlie_{n,t}.
\]
\end{prop}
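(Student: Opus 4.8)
The plan is to establish the identity of degree $-1$ maps by evaluating both sides on a general decomposable element $sx_1 \otimes \cdots \otimes sx_{n+t}$ (which suffices by $\kring$-linearity) and by decomposing the defining double sum of $(\dleib)_{n+t}$ from Definition \ref{defn:leib_complex}. Each summand is indexed by a pair $1 \le i < j \le n+t$, and I would partition these pairs into three classes relative to the cut between the first $n$ and the last $t$ tensor factors: (I) $j \le n$; (II) $i > n$; and (III) $i \le n < j$. The term indexed by $(i,j)$ only alters the factors in positions $i$ and $j$ (replacing $sx_i$ by $s(x_ix_j)$ and deleting $sx_j$) and---as already exploited in the proof of Proposition \ref{prop:leib_diff}---carries a sign $(-1)^{j+1}$ depending only on $j$; this is what makes the decomposition clean, since the sign is insensitive to the re-indexing performed in each class.

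For class (I) both altered factors lie in the first block while the last $t$ factors are untouched, so these terms sum to $(\dleib)_n \otimes \id$ with signs matching on the nose. For class (II) both altered factors lie in the second block; re-indexing $i' = i-n$, $j' = j-n$ turns the configuration into a term of $(\dleib)_t$ applied to the last $t$ factors, and the sign transforms as $(-1)^{j+1} = (-1)^n(-1)^{j'+1}$. Hence class (II) contributes $(-1)^n$ times $(\dleib)_t$ applied to the second block with the first block fixed, matching the summand $(-1)^n \id \otimes (\dleib)_t$; the explicit prefactor $(-1)^n$ records exactly this re-indexing.

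Class (III) is the mixed case, and is where I expect the real work to lie. Here I would unpack Definition \ref{defn:dlie_nt}: the map $\id \otimes \ld$ extracts, for each $j' \in \{1,\ldots,t\}$, the factor $sx_{n+j'}$ from the second block with sign $(-1)^{j'+1}$ (Notation \ref{nota:ld}) and places it immediately after the first block, whereupon $\dlie_{n,1}$ contributes a sign $(-1)^n$ (equation \eqref{eqn:dlie}) and applies the right regular $\glie$-action. Because that action is a derivation on $(s\g)^{\otimes n}$ by the monoidal structure of Lemma \ref{lem:lie_symm_mon} (cf. Lemma \ref{lem:glie_action}), it produces precisely $\sum_{i=1}^n sx_1 \otimes \cdots \otimes s(x_ix_{n+j'}) \otimes \cdots \otimes sx_n$, i.e.\ all the mixed terms with $i \le n < j$. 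The two signs combine as $(-1)^{j'+1}(-1)^n = (-1)^{(j-n)+1}(-1)^n = (-1)^{j+1}$, agreeing with the sign of the corresponding term of $(\dleib)_{n+t}$, so class (III) sums to $\dlie_{n,t}$. Assembling the three classes yields the asserted identity. The sole obstacle is therefore the sign bookkeeping in class (III)---showing that the contributions of $\ld$, of $\dlie_{n,1}$, and of the original differential cancel correctly, and that the derivation property of the $\glie$-action generates every mixed term exactly once. A more economical alternative is induction on $t$ with base case $t=1$ furnished by Proposition \ref{prop:leib_diff}: splitting off the last factor reduces $(\dleib)_{n+t}$ to $(\dleib)_{n+t-1}\otimes\id + \dlie_{n+t-1,1}$, after which the inductive hypothesis and the case $t=1$ applied to the last $t$ factors leave only a recursion among $\dlie_{n,t}$, $\dlie_{n,t-1}$, $\dlie_{n+t-1,1}$ and $\dlie_{t-1,1}$ to be checked; this merely trades the combinatorial sign analysis for the verification of that recursion.
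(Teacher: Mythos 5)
Your proposal is correct and follows essentially the same route as the paper: the paper's proof of Proposition \ref{prop:diff_dlie} is a direct verification from the explicit formula in Definition \ref{defn:leib_complex}, splitting the double sum over pairs $i<j$ according to the cut between the first $n$ and last $t$ factors, with the same key observation you exploit --- that the sign $(-1)^{j+1}$ depends only on $j$, so it transforms cleanly under re-indexing and matches the signs built into $\ld$, equation (\ref{eqn:dlie}), and the derivation property of the $\glie$-action in the mixed case. Your sign bookkeeping in all three classes checks out, so the "real work" you flag in class (III) is in fact already complete.
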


\begin{proof}
For $t =0$ the statement is vacuous and, for $t=1$, is a restatement of Proposition \ref{prop:leib_diff}.
 The general case is proved similarly to Proposition \ref{prop:leib_diff}.
\end{proof}

\begin{cor}
\label{cor:diff_dlie_prim}
For $n,t \in \nat$, suppose that $X \in(s\g)^{\otimes n}$ and  $Y \in \lieopd_t(s\g) \subset (s\g)^{\otimes t}$ with $t >1$. Then 
\[
 \dleib (X \otimes Y) 
 = 
 \dleib X \otimes Y + (-1)^n X \otimes \dleib Y 
 \]
 \end{cor}
 
 \begin{proof}
 The hypothesis that $t >1$ and $Y$ is primitive implies that $\ld Y = 0$ by Lemma \ref{lem:ld_lie}, hence $\dlie_{n,t} (X \otimes Y)=0$. Thus the result follows from Proposition \ref{prop:diff_dlie}.
 \end{proof}

\subsection{The subcomplex}

Corollary \ref{cor:diff_dlie_prim} leads to an alternative proof of \cite[Proposition 2.1]{2019arXiv190400121P}:

\begin{prop}
\label{prop:lie_stable}
For $\g$ a Leibniz algebra, $\lieopd (s\g) \subset T(s\g)$ is stable under the Leibniz differential. 

Moreover, for $X \in \lieopd_n(s \g)$ with $n \geq 2$ and $y \in  \g$:
$$ \dleib [X, sy] 
= 
 [\dleib X, sy] 
+ 
\dlie_{n,1} (X \otimes sy).
$$
\end{prop}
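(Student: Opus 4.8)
The plan is to establish the displayed formula first, as an identity of elements of $T(s\g)$ with $[-,-]$ the graded commutator, and then to deduce stability from it by an induction on the weight. The formula is the engine: once it is in hand, both of its right-hand terms will be seen to lie in $\lieopd(s\g)$, which is exactly what stability requires.

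First I would prove the formula for a fixed $X \in \lieopd_n(s\g)$ with $n \geq 2$. Since $X$ has homological degree $n$ and $sy$ has degree $1$, the graded commutator expands as $[X, sy] = X \otimes sy - (-1)^n sy \otimes X$, and I would apply $\dleib$ to each term. For $X \otimes sy$, Proposition \ref{prop:leib_diff} gives $\dleib(X \otimes sy) = (\dleib X)\otimes sy + \dlie_{n,1}(X \otimes sy)$. For $sy \otimes X$, the crucial input is that $X$ is primitive of weight $n > 1$, so $\ld X = 0$ by Lemma \ref{lem:ld_lie}; Corollary \ref{cor:diff_dlie_prim}, applied with the weight-one factor $sy$ and the primitive factor $X$, then yields $\dleib(sy \otimes X) = - sy \otimes \dleib X$. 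Combining and simplifying the signs, I expect to obtain
\[
\dleib [X, sy] = (\dleib X) \otimes sy + (-1)^n sy \otimes \dleib X + \dlie_{n,1}(X \otimes sy).
\]
Since $\dleib X$ has degree $n-1$, the commutator $[\dleib X, sy] = (\dleib X) \otimes sy + (-1)^n sy \otimes \dleib X$ recognizes the first two terms, giving the asserted identity.

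With the formula established, I would prove stability by induction on the weight $m$, the claim being $\dleib(\lieopd_m(s\g)) \subseteq \lieopd(s\g)$. The cases $m = 1$ and $m = 2$ are immediate: $\dleib$ vanishes on $(s\g)^{\otimes 1}$, and for $m = 2$ the target $(s\g)^{\otimes 1} = \lieopd_1(s\g)$ already lies in $\lieopd(s\g)$. For the inductive step with $m \geq 3$ I would use that the free Lie algebra is generated in weight one, so that $\lieopd_m(s\g)$ is spanned by brackets $[X, sy]$ with $X \in \lieopd_{m-1}(s\g)$ and $y \in \g$. Applying the formula to such a bracket, the term $[\dleib X, sy]$ lies in $[\lieopd_{m-2}(s\g), \lieopd_1(s\g)] \subseteq \lieopd_{m-1}(s\g)$ by the inductive hypothesis together with closure of $\lieopd(s\g)$ under the bracket, while $\dlie_{m-1,1}(X \otimes sy) = (-1)^{m-1} X \cdot y$ (by \eqref{eqn:dlie}) lies in $\lieopd_{m-1}(s\g)$ because this subspace is stable under the weight-preserving $\glie$-action by Lemma \ref{lem:glie_action}(2). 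Hence $\dleib[X, sy] \in \lieopd(s\g)$, completing the induction.

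The main obstacle is organizational rather than conceptual: one must establish the commutator formula \emph{before} invoking stability, so that $[\dleib X, sy]$ is read as the graded commutator in $T(s\g)$ rather than as a bracket already known to land in $\lieopd(s\g)$, and only afterwards bootstrap stability from it. The one genuinely external ingredient is the standard fact that $\lieopd_m(s\g) = [\lieopd_{m-1}(s\g), \lieopd_1(s\g)]$ (left-normed brackets span, via the Jacobi identity and antisymmetry); the remainder is careful Koszul-sign bookkeeping, with the decisive simplification coming from the vanishing $\ld X = 0$ that lets $\dleib$ pass through the primitive $X$ as a graded derivation.
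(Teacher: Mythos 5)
Your proposal is correct and takes essentially the same approach as the paper: the same commutator identity derived from Proposition \ref{prop:leib_diff} and Corollary \ref{cor:diff_dlie_prim} (via the vanishing of $\ld$ on primitives of weight $>1$), the same surjection $\lieopd_{n}(s\g) \otimes s\g \twoheadrightarrow \lieopd_{n+1}(s\g)$ for the inductive step, and the same appeal to Lemma \ref{lem:glie_action} to place $\dlie_{n,1}(X \otimes sy)$ in $\lieopd_n(s\g)$. The only difference is organizational — you extract the commutator formula as an unconditional identity in $T(s\g)$ before running the induction, whereas the paper proves it inside the inductive step — but since the paper's derivation of that formula never uses the inductive hypothesis either, this is the same argument.
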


\begin{proof}
Since $\lieopd_1 (s\g) = s\g$ and the differential $\dleib$ vanishes on $s\g$, the result holds for 
$\dleib$ restricted to $\lieopd_n (s \g)$ for $n \leq 2$.  Starting from this, the result is proved by increasing induction upon $n$. 

For $n \geq 1$, the Lie bracket induces a surjection 
\[
\lieopd_n (s\g) \otimes s \g \stackrel{[-,-]}{\twoheadrightarrow} \lieopd_{n+1} (s\g).
\]
Hence, for the inductive step (with $n \geq 2$), it suffices to show that, given $X \in \lieopd_n(s \g)$ and $y \in  \g$, the commutator
\[
[X, sy] = X \otimes sy - (-1)^n sy \otimes X \in \lieopd_{n+1} (s\g)
 \] 
has differential $\dleib [X,sy]$ that lies in $\lieopd_n (s\g) \subset T^n (s\g)$ and that this is given by the 
explicit expression of the statement.

By Proposition \ref{prop:diff_dlie} and Corollary \ref{cor:diff_dlie_prim}
\begin{eqnarray*}
\dleib [X, sy] 
&=&
\dleib X \otimes sy 
+ \dlie_{n,1} (X \otimes sy) 
- \Big( 
(-1)^{n-1} sy \otimes \dleib X 
+ 
\dlie_{1,n} (sy \otimes X) 
\Big)
\\
&=&
[\dleib X , sy]  
+ \dlie_{n,1} (X \otimes sy), 
\end{eqnarray*}
where the second equality uses $\dlie_{1,n} (sy \otimes X)=0$, as in Corollary \ref{cor:diff_dlie_prim}, since $n \geq 2$.

By the inductive hypothesis, $\dleib X\in \lieopd_{n-1} (s \g)$, hence $[\dleib X , sy]  \in \lieopd_{n} (s \g)$.  Lemma \ref{lem:glie_action} implies that $\dlie_{n,1} (X \otimes sy) \in \lieopd_n (s\g)$, since this is induced by the right regular action. This completes the inductive step and hence the proof.
\end{proof}

As pointed out by the referee, this gives the following useful relationship between the right action of $\g$ on $T(s\g)$ and the Leibniz differential, using Notation \ref{nota:rr_action_T}:

\begin{cor}
\label{cor:rr_dleib}
For $n \geq 2$, $X \in \lieopd_n (s \g)$ and $y \in \g$, one has:
\[
X \cdot y = (-1)^{n} \big( \dleib [X, sy] - [\dleib X, sy] \big).
\]
\end{cor}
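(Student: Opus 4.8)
The plan is to observe that this corollary is essentially a rearrangement of the explicit formula already established in Proposition \ref{prop:lie_stable}, combined with the interpretation of $\dlie_{n,1}$ in terms of the right action recorded in equation (\ref{eqn:dlie}). So rather than starting a fresh argument, I would simply isolate the relevant term and substitute.

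Concretely, for $n \geq 2$, $X \in \lieopd_n(s\g)$ and $y \in \g$, Proposition \ref{prop:lie_stable} yields
\[
\dleib [X, sy] = [\dleib X, sy] + \dlie_{n,1}(X \otimes sy).
\]
I would first solve this for the final summand, obtaining
\[
\dlie_{n,1}(X \otimes sy) = \dleib[X, sy] - [\dleib X, sy].
\]
Next I would invoke the identification $\dlie_{n,1}(X \otimes sy) = (-1)^n X \cdot y$ of equation (\ref{eqn:dlie}), which expresses the map $\dlie_{n,1}$ through the right action of $\g$ on $T(s\g)$ of Notation \ref{nota:rr_action_T}. Substituting gives
\[
(-1)^n X \cdot y = \dleib[X, sy] - [\dleib X, sy].
\]

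To finish, I would multiply both sides by $(-1)^n$ and use $\big((-1)^n\big)^2 = 1$ to arrive at the asserted identity
\[
X \cdot y = (-1)^n \big( \dleib[X, sy] - [\dleib X, sy] \big).
\]
I do not anticipate any genuine obstacle here: the content lies entirely in Proposition \ref{prop:lie_stable} and in the fact that $\dlie_{n,1}$ encodes the right regular action, both of which are available. The only point requiring a moment's care is the bookkeeping of the sign, ensuring that the factor $(-1)^n$ introduced by (\ref{eqn:dlie}) is correctly cleared rather than doubled; since $(-1)^n$ is its own inverse this is immediate.
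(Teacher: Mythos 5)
Your proposal is correct and is exactly the paper's argument: the paper also deduces the corollary immediately from the explicit formula in Proposition \ref{prop:lie_stable} together with the identification $\dlie_{n,1}(X \otimes sy) = (-1)^n X \cdot y$ of equation (\ref{eqn:dlie}). Your write-up simply makes the rearrangement and sign-clearing explicit, which is fine.
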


\begin{proof}
This is an immediate consequence of the explicit identification given in Proposition \ref{prop:lie_stable},  using the equality given in equation (\ref{eqn:dlie}).
\end{proof}

The following gives some initial information on the homology of $(\lieopd(s\g), \dleib)$:

\begin{prop}
\label{prop:Lie_hom_glie_action}
For $\g$ a Leibniz algebra,  $H_* (\lieopd(s\g), \dleib)$ takes values in $\glie$-modules.
\begin{enumerate}
\item 
There are $\glie$-module isomorphisms
\[
H_* (\lieopd(s\g), \dleib) \cong 
\left\{
\begin{array}{ll}
0 & *=0 \\
(s \glie)_1 \cong \glie & *=1 
\end{array}
\right.
\]
\item 
For $*\geq 2$,  $H_* (\lieopd(s\g), \dleib)$ has trivial $\glie$-action. 
\end{enumerate}
\end{prop}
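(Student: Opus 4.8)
The plan is to treat the three assertions in turn: the existence of the $\glie$-module structure, the explicit computation in homological degrees $0$ and $1$, and finally the triviality of the action in degrees $\geq 2$. For the module structure, I would simply combine Lemma~\ref{lem:glie_action}(2), which makes $\lieopd(s\g)$ a sub-$\glie$-module of $T(s\g)$, with Remark~\ref{rem:dleib_glie-linear}, which records that $\dleib$ is $\glie$-linear for the right regular action. Together these say that $(\lieopd(s\g),\dleib)$ is a complex of $\glie$-modules, so its homology automatically inherits a $\glie$-module structure. I would also note at this point that, since $s\g$ is concentrated in homological degree $1$, each weight summand $\lieopd_n(s\g)\subset (s\g)^{\otimes n}$ lies in homological degree $n$; hence the homological grading coincides with the weight grading, and because $\lieopd(s\g)=\bigoplus_{n\geq 1}\lieopd_n(s\g)$ has no weight-zero part, $H_0=0$ is immediate.

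For degree one, $\lieopd_1(s\g)=s\g$, on which $\dleib$ vanishes, so the space of $1$-cycles is all of $s\g$; the boundaries form the image of $\dleib\colon \lieopd_2(s\g)\to s\g$. Using $\dlie_{1,1}(sx\otimes sy)=-s(xy)$ and Proposition~\ref{prop:leib_diff}, I would compute $\dleib[sx,sy]=-s(xy+yx)$. The elements $xy+yx$ are precisely the image of $\Gamma^2\g$ under the product, i.e.\ they span the Leibniz kernel $\langle x^2\rangle$ (cf.\ Remark~\ref{rem:leibniz_kernel}), so the image of $\dleib$ is $s\langle x^2\rangle$ and therefore $H_1\cong s\g/s\langle x^2\rangle\cong s\glie$. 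Finally the right regular action of $\glie$ on $\g$ descends, through the ideal $\langle x^2\rangle$, to the adjoint action of $\glie$ on itself, which identifies $H_1$ with the stated $\glie$-module $(s\glie)_1\cong\glie$.

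The second part of the statement is the crux, and it is where Corollary~\ref{cor:rr_dleib} does essentially all of the work. For $n\geq 2$, a class in $H_n$ is represented by a cycle $X\in\lieopd_n(s\g)$ with $\dleib X=0$, and the action of $\overline{y}\in\glie$ is represented on chains by $X\cdot y$. Applying Corollary~\ref{cor:rr_dleib} and using $\dleib X=0$ gives
\[
X\cdot y=(-1)^{n}\big(\dleib[X,sy]-[\dleib X,sy]\big)=(-1)^{n}\,\dleib[X,sy],
\]
which is a boundary because $[X,sy]\in\lieopd_{n+1}(s\g)$. Hence $[X]\cdot\overline{y}=0$ in $H_n$ for every $y\in\g$, so the $\glie$-action is trivial for $*\geq 2$.

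None of the three steps is individually deep. The only place that genuinely requires care is the degree-one computation: keeping the Koszul signs straight in $\dleib[sx,sy]$ and checking cleanly that its image is exactly $s\langle x^2\rangle$ and that the induced $\glie$-action is the adjoint one. The higher-degree triviality, by contrast, is a one-line consequence of Corollary~\ref{cor:rr_dleib} once one notices that the cycle condition annihilates the correction term $[\dleib X,sy]$; the entire conceptual content is carried by that identity relating the right action of $\g$ on $T(s\g)$ to the Leibniz differential.
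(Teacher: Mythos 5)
Your proposal is correct and follows essentially the same route as the paper's proof: the $\glie$-module structure comes from Lemma~\ref{lem:glie_action} together with the $\glie$-linearity of $\dleib$, the degree-one computation identifies the boundaries $\dleib[sx,sy]=-s(xy+yx)$ with the Leibniz kernel via Remark~\ref{rem:leibniz_kernel}, and the triviality of the action in degrees $\geq 2$ is exactly the paper's one-line application of Corollary~\ref{cor:rr_dleib} to a cycle $X$, giving $X\cdot y=(-1)^n\dleib[X,sy]$, a boundary. The only cosmetic difference is that you make explicit the weight-equals-homological-degree observation for $H_0=0$, which the paper leaves as ``immediate.''
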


\begin{proof}
The $\glie$-action is induced by the $\glie$-action on $T(s\g)$. Explicitly, if $X \in \lieopd_n (s\g)$ is a cycle and $sx$ in $s\g$ (for $x \in \g$, with associated element $\overline{x} \in \glie$),  the class $[X]\overline{x}$ is represented by the cycle $(-1)^n \dlie_{n,1} (X \otimes sx)$ (cf. equation (\ref{eqn:dlie})).

 The vanishing of homology in degree zero is immediate and the degree $1$ homology  is given by \cite[Lemma 2.3]{2019arXiv190400121P}. This can be seen explicitly as follows: in homological degrees $\leq 2$, the complex has the form 
 \[
 \lieopd_2 (s \g) \cong s^2 \Gamma ^2 \g 
 \rightarrow 
 \lieopd_1 (s \g) = s\g 
 \rightarrow 
 0 = \lieopd_0 (s \g)
 \]
 and, up to sign, the differential $ \lieopd_2 (s \g) \rightarrow  \lieopd_1 (s \g)$ is induced by the restriction of the Leibniz product to $\Gamma^2 \g \subset \g^{\otimes 2}$. This has cokernel $\glie$ (see Remark \ref{rem:leibniz_kernel}) and  gives the isomorphism of $\glie$-modules.

It remains to show that the $\glie$-action is trivial in higher homological degree. Consider $X$, $sx$ as above, with $n \geq 2$. Thus $[X, sx] \in \lieopd_{n+1}(s\g)$ and, since $X$ is a cycle, by Corollary \ref{cor:rr_dleib} one has 
\[
X \cdot x = (-1)^n \dleib [X,sx] 
\]
and hence, in homology, $[X]\overline{x}=0$, as required.
\end{proof}

\section{The primitive filtration and its spectral sequence}
\label{sect:prim}

This Section gives the proof of a conjecture due to Pirashvili and Loday \cite{2019arXiv190400121P}, namely that the primitive filtration gives a filtration (as complexes) of the Leibniz complex.  This leads to the  spectral sequence introduced in Section \ref{subsect:ss}.

\subsection{The primitive filtration}

Recall that, for $\h$ a graded Lie algebra, the universal enveloping algebra $U(\h)$ is the quotient of the tensor algebra $T(\h)$ by the two-sided ideal generated by $(x \otimes y - (-1)^{|x||y|} y \otimes x) - [x,y]$ for $x, y\in \h$, where $[x,y]$ is the commutator in $\h$. 

One has the primitive filtration of $U(\h)$ (cf. \cite[Definition 5.10]{MM}):

\begin{defn}
\label{defn:filt_Uh}
For $n \in \nat$, let $\filt_n U(\h)$ denote the image of the composite
$
\bigoplus_{i \leq n} T^n (\h) 
\subset T(\h)
\twoheadrightarrow 
U(\h).
$
\end{defn}

By construction, this is an exhaustive increasing filtration of $U(\h)$; it  is  compatible with the product of $U(\h)$, i.e.,  the product restricts for $m, n \in \nat$ to 
\[
\filt_m U(\h) 
\otimes 
\filt_n U(\h) 
\rightarrow 
\filt_{m+n} U (\h)
\]
and the associated graded is the free graded commutative algebra on $\h$, 
$ 
\mathsf{gr} \ \filt_\bullet U(\h) 
\cong 
S^* (\h),
$
 equipped with both the length grading and the homological grading inherited from that  of $\h$.  Explicitly, for $n \in \nat$:
\[
\filt_n U(\h) / \filt_{n-1} U(\h) 
\cong 
S^n (\h),
\]
where the formation of  $S^n(-)$ takes into account the Koszul signs arising from the homological degree.

In the case $\h = \lieopd (sV)$, the universal enveloping algebra $U(\lieopd (sV))$ is naturally isomorphic to $T(sV)$ (see Proposition \ref{prop:CMM}). In particular, the above gives a filtration of the tensor algebra $T(sV)$ with $\filt_n T(sV)$ the image of the composite
\[
\bigoplus_{i \leq n} \lieopd(sV) ^{\otimes i} 
\subset T(\lieopd (sV))
\twoheadrightarrow 
U(\lieopd (sV)) \cong T(sV).
\]
Thus, $\filt_0 T(sV) = \kring$ and $\filt_1 T(sV) = \kring \oplus \lieopd (sV)$.

\subsection{Loday's conjecture}

For $\g$ a Leibniz algebra, one has the Hopf algebra $T(s\g)$ and the underlying graded vector space is equipped with the Leibniz differential $\dleib$. 
 Loday suggested (see \cite[Conjecture 2]{2019arXiv190400121P}) that the stability of $\lieopd (s\g)$ under the Leibniz differential (see Proposition \ref{prop:lie_stable}) should generalize to the appropriate filtration of $T(s\g)$. The following  
establishes this conjecture:

\begin{thm}
\label{thm:prim_filt_stable}
For $\g$ a Leibniz algebra and $n \in \nat$, $\filt_n T(s \g)$ is a subcomplex of the Leibniz complex $(T(s\g),\dleib)$. This defines an increasing, exhaustive filtration 
\[
\filt_{0} T(s\g) = \kring \subset 
\ldots \subset \filt_n T(s\g) \subset \filt_{n+1} T(s\g) \subset \ldots \subset T(s\g).
\]

Moreover, there is a natural isomorphism of complexes:
\[
\filt_n T(s\g)/ \filt_{n-1} T(s\g)
\cong 
S^n (\lieopd (s\g), \dleib).
\]
\end{thm}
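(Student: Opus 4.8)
The plan is to treat the three assertions separately. That the filtration is increasing and exhaustive is immediate from Definition~\ref{defn:filt_Uh} and the discussion following it, so the real content is (i) that $\dleib$ preserves each $\filt_n T(s\g)$, and (ii) that the induced differential on the associated graded is the symmetric-power differential $S^n(\dleib)$. Before starting I would record one auxiliary fact that drives everything: \emph{the right regular action $X \otimes y \mapsto X \cdot y$ preserves the primitive filtration}. Indeed, by Lemma~\ref{lem:glie_action} the $\glie$-action makes $T(s\g)$ a Hopf algebra in $\glie$-modules with $\lieopd(s\g)$ a sub-$\glie$-module; by Lemma~\ref{lem:lie_symm_mon} (and the attendant Remark, since $\glie$ is concentrated in degree zero) this action is a derivation of the concatenation product with no Koszul signs. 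Hence it carries a product of $k$ primitives to a sum of products of $k$ primitives, so $X \cdot y \in \filt_n T(s\g)$ whenever $X \in \filt_n T(s\g)$.

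For assertion (i) I would induct on $n$. The cases $n \leq 1$ are exactly Proposition~\ref{prop:lie_stable}. For the inductive step, $\filt_n T(s\g)$ is spanned by $\filt_{n-1}T(s\g)$ together with elements $X \otimes P$ with $X \in \filt_{n-1}T(s\g)$ and $P \in \lieopd(s\g)$ weight-homogeneous, so it suffices to control $\dleib(X \otimes P)$. If $P \in \lieopd_t(s\g)$ with $t>1$, then Corollary~\ref{cor:diff_dlie_prim} gives $\dleib(X \otimes P) = \dleib X \otimes P + (-1)^{|X|} X \otimes \dleib P$, and both summands lie in $\filt_{n-1}T(s\g) \cdot \lieopd(s\g) \subseteq \filt_n T(s\g)$, using the inductive hypothesis and the $\dleib$-stability of $\lieopd(s\g)$. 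If $P = sy \in s\g = \lieopd_1(s\g)$, then Proposition~\ref{prop:leib_diff} and equation~(\ref{eqn:dlie}) give $\dleib(X \otimes sy) = \dleib X \otimes sy \pm X \cdot y$, where $\dleib X \otimes sy \in \filt_n T(s\g)$ and $X \cdot y \in \filt_{n-1}T(s\g)$ by the auxiliary fact. Either way $\dleib(X \otimes P) \in \filt_n T(s\g)$.

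For assertion (ii), Proposition~\ref{prop:CMM} and the PBW description of the primitive filtration supply a natural isomorphism $\filt_n T(s\g)/\filt_{n-1}T(s\g) \cong S^n(\lieopd(s\g))$ of graded vector spaces; since $\dleib$ preserves the filtration it induces a differential of filtration degree zero on the associated graded, and the task is to identify it with $S^n(\dleib)$. I would prove, by induction on $n$ for weight-homogeneous primitives $P_1, \ldots, P_n$, that
\[
\dleib(P_1 \otimes \cdots \otimes P_n) \equiv \sum_{i=1}^n (-1)^{\epsilon_i}\, P_1 \otimes \cdots \otimes \dleib P_i \otimes \cdots \otimes P_n \pmod{\filt_{n-1}T(s\g)},
\]
with Koszul signs $\epsilon_i$, which is precisely $S^n(\dleib)$ under the isomorphism. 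Writing $P_1 \otimes \cdots \otimes P_n = P_1 \otimes Z$ with $Z = P_2 \otimes \cdots \otimes P_n \in \filt_{n-1}T(s\g)$ and applying Proposition~\ref{prop:diff_dlie}, the derivation terms $\dleib P_1 \otimes Z$ and $(-1)^{|P_1|} P_1 \otimes \dleib Z$ supply exactly the right-hand side (the $i=1$ term directly, the remaining terms by induction, the error term $P_1 \otimes \filt_{n-2}T(s\g)$ sitting in $\filt_{n-1}T(s\g)$).

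The hard part is showing that the remaining correction $\dlie_{a,b}(P_1 \otimes Z)$ lies in $\filt_{n-1}T(s\g)$, so that it is killed in the associated graded; this is where the primitivity of the factors is essential. By Definition~\ref{defn:dlie_nt} this term is built from $\ld Z$, and since $Z$ is a product of primitives its coproduct is $\prod_{i \geq 2}(P_i \otimes 1 + 1 \otimes P_i)$, so the linear-left component $\ld Z$ (cf.\ Lemma~\ref{lem:ld_lie}) receives contributions only from the weight-one factors $P_i \in s\g$, each paired on the right with the complementary product of $n-2$ primitives, an element of $\filt_{n-2}T(s\g)$. Applying $\dlie_{a,1}$, which acts through the right regular action and hence preserves $\lieopd(s\g)$, then produces products of $n-1$ primitives, i.e.\ elements of $\filt_{n-1}T(s\g)$, as required. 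The verification of the Koszul signs $\epsilon_i$ and the naturality of the isomorphism (inherited from the naturality of the CMM isomorphism, the filtration, the differential, and the $\glie$-action) are then routine.
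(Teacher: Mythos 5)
Your proposal is correct and takes essentially the same route as the paper's own proof: induction over products of weight-homogeneous primitives, using Corollary \ref{cor:diff_dlie_prim} for factors of weight $>1$ and the right regular action (Lemma \ref{lem:glie_action} together with equation (\ref{eqn:dlie})) to show that the correction terms coming from weight-one factors land in lower filtration. Your factor-by-factor congruence modulo $\filt_{n-1}T(s\g)$ is exactly the paper's formula (\ref{eqn:diff_gen}) read modulo $\filt_{n-1}T(s\g)$, and your identification of the associated graded with $S^n(\lieopd(s\g),\dleib)$ via the PBW isomorphism coincides with the paper's argument.
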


\begin{proof}
First consider the stability of the subcomplexes. The case $n=0$ is obvious and the case $n=1$ is given by Proposition \ref{prop:lie_stable}.

The result for $n \geq 2$ is proved by increasing induction upon $n$. For the inductive step, it suffices to prove that, if $X_i \in \lieopd (s\g)$ for $1 \leq i \leq n$, then 
\[
\dleib (X_1 \otimes X_2 \otimes \ldots \otimes X_n) \in \filt_n T(s \g).
\]

Suppose that $X_i \in \lieopd_{n_i}(s\g) \subset (s\g)^{\otimes n_i}$ with $n_i \geq 1$ for each $i$. Then a straightforward induction upon $n$ using Corollary \ref{cor:diff_dlie_prim} gives that  
\begin{eqnarray}
\label{eqn:diff_gen}
\dleib (X_1 \otimes X_2 \otimes \ldots \otimes X_n)
&=& 
\sum_{i=1}^n 
(-1)^{n(i)} X_1 \otimes \ldots \otimes  \dleib  X_i \otimes \ldots \otimes X_n
\\
&&
+
\sum_{\{ i | n_i =1 \}} 
\dlie_{n(i),1} \big( (\bigotimes_{j<i} X_j) \otimes X_i \big) \otimes (\bigotimes_{k>i}X_k)
\nonumber
\end{eqnarray}
where $n(i) := \sum_{j=1}^{i-1} n_j$. Here  $\dleib X_i \in \lieopd( s\g )$, by Proposition \ref{prop:lie_stable}, and  $\dlie_{n(i),1} \big( (\bigotimes_{j<i} X_j) \otimes X_i \big)$ lies in $\lieopd (s\g) ^{\otimes (i-1)}$, by Lemma \ref{lem:glie_action}. 

Hence $\dleib (X_1 \otimes X_2 \otimes \ldots \otimes X_n)$ lies in the image of 
\[
\lieopd (s\g) ^{\otimes n}  \oplus \lieopd (s\g)^{\otimes n-1} \subset T(\lieopd(s\g) ) \rightarrow T(s\g), 
\]
in particular it lies in $\filt_n T (s \g)$. This establishes that $\filt_n T(s\g)$ is a subcomplex of $(T(s\g), \dleib)$. 

The above also implies that there is a surjection of {\em complexes} 
\[
(\lieopd (s\g), \dleib )^{\otimes n} 
\twoheadrightarrow 
\filt_n T(s \g) / \filt_{n-1} T (s\g),
\]
since the second term of equation (\ref{eqn:diff_gen}) lies in filtration $n-1$. It follows that the canonical factorization 
\[
S^n (\lieopd (s\g)) \stackrel{\cong}{\rightarrow} \filt_n T(s \g) / \filt_{n-1} T (s\g)
\] 
is an isomorphism of complexes, as required.
\end{proof}

\subsection{The associated spectral sequence}
\label{subsect:ss}

The filtration given by Theorem \ref{thm:prim_filt_stable} yields a spectral sequence for calculating the Leibniz homology of $\g$ with trivial coefficients, starting from knowledge of the homology of the subcomplex $(\lieopd (s\g), \dleib)$:

\begin{thm}
\label{thm:ss}
For $\g$ a Leibniz algebra, there is a convergent first quadrant homological spectral sequence 
\[
E^1_{p,q} 
= 
H_{p+q} (\filt_p T(s\g)/ \filt_{p-1} T(s\g)) 
\Rightarrow 
H_{p+q} (T(s\g), \dleib ) = HL_{p+q} (\g; \kring).
\]
Moreover, $H_*(\filt_p T(s\g)/ \filt_{p-1} T(s\g)) \cong S^p (H_* (\lieopd (s \g), \dleib))$. 
In particular, 
\begin{eqnarray*}
E^1 _{0,q} 
&= &
\left \{ 
\begin{array}{ll}
\kring & q=0 \\
0 & q>0.
\end{array}
\right.
\\
E^1 _{1,q} & \cong & H_{q+1} (\lieopd (s\g), \dleib). 
\end{eqnarray*}

\begin{enumerate}
\item 
  $(E^1_{p,0}, d^1)$  identifies with the Chevalley-Eilenberg complex $\ce(\glie)$;
  \item 
   the $q=0$ edge homomorphism of the spectral sequence identifies with the comparison map
 $
HL_* (\g; \kring) 
\rightarrow 
H_* (\glie; \kring).
$ 
\end{enumerate}
\end{thm}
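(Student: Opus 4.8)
The plan is to apply the standard spectral sequence of a filtered complex to the filtration of Theorem~\ref{thm:prim_filt_stable}, compute its $E^1$-page by a Künneth-type argument valid in characteristic zero, and then pin down the bottom row and the edge homomorphism by comparison with the Chevalley--Eilenberg complex through the morphism of Proposition~\ref{prop:leib_ce}. First I would invoke the general machinery: an increasing, exhaustive filtration of a chain complex by subcomplexes produces a homological spectral sequence with $E^0_{p,q} = (\filt_p T(s\g)/\filt_{p-1}T(s\g))_{p+q}$ and $E^1_{p,q} = H_{p+q}(\filt_p T(s\g)/\filt_{p-1}T(s\g))$, the $d^1$-differential being induced by $\dleib$. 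Convergence reduces to boundedness, which I would check as follows: since $\lieopd(s\g)$ is concentrated in homological degrees $\geq 1$, Theorem~\ref{thm:prim_filt_stable} shows $\filt_p/\filt_{p-1}\cong S^p(\lieopd(s\g),\dleib)$ is concentrated in degrees $\geq p$; hence $E^1_{p,q}=0$ for $q<0$, giving a first-quadrant spectral sequence, and in each total degree $n$ only the finitely many $p$ with $0\leq p\leq n$ contribute.

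Next I would compute the $E^1$-page. Over a field of characteristic zero the functor $S^p(-)=(T^p(-))_{\sym_p}$ is exact, hence commutes with homology; combined with the Künneth isomorphism this yields
\[
H_*(\filt_p/\filt_{p-1}) \cong H_*\big(S^p(\lieopd(s\g),\dleib)\big) \cong S^p\big(H_*(\lieopd(s\g),\dleib)\big).
\]
Taking $\filt_0 = \kring$ gives the $E^1_{0,q}$ row, while $S^1=\id$ gives $E^1_{1,q}\cong H_{q+1}(\lieopd(s\g),\dleib)$; here Proposition~\ref{prop:Lie_hom_glie_action} supplies both rows and, crucially, the isomorphism $H_1(\lieopd(s\g),\dleib)\cong s\glie$, so that $E^1_{p,0}\cong S^p(s\glie)$.

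The hard part will be claim~(1): identifying $(E^1_{\bullet,0},d^1)$ with $\ce(\glie)=(S^*(s\glie),d_{\ce})$. The underlying graded vector spaces already agree by the previous step, so the issue is the differential, and rather than compute $d^1$ by hand I would exploit the chain map $\pi\colon (T(s\g),\dleib)\to\ce(\glie)$ of Proposition~\ref{prop:leib_ce}. Viewing $\ce(\glie)=S^*(s\glie)=U(s\glie)$ as the enveloping algebra of the abelian Lie algebra $s\glie$, its own primitive filtration is exactly the symmetric-length filtration $\bigoplus_{i\leq n}S^i(s\glie)$; moreover $\pi$ is the map obtained by applying $U(-)$ to the Lie algebra maps $\lieopd(s\g)\to\lieopd(s\glie)\to s\glie$ (the second being abelianization), so it preserves primitive filtrations and induces a morphism of spectral sequences. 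On associated gradeds $\pi$ is $S^p$ of $\lieopd(s\g)\to s\glie$, which realizes the isomorphism $H_1(\lieopd(s\g))\cong s\glie$, so the induced map on each $E^1_{p,0}$ is an isomorphism. Since the Chevalley--Eilenberg differential strictly lowers symmetric length, the target spectral sequence has zero $E^0$-differential and is concentrated in the row $q=0$, where its $d^1$ is literally $d_{\ce}$; transporting $d^1$ across the $E^1$-isomorphism then identifies the source bottom row with $\ce(\glie)$, whence $E^2_{p,0}\cong H_p(\glie)$.

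Finally, for the edge homomorphism of claim~(2) I would argue by naturality. The filtered chain map $\pi$ induces a morphism of convergent spectral sequences compatible with the abutment map $\pi_*\colon HL_*(\g;\kring)\to H_*(\glie;\kring)$ of Proposition~\ref{prop:leib_ce}. Because the target spectral sequence collapses onto the row $q=0$, its bottom-row edge homomorphism $H_*(\glie)\to E^2_{\bullet,0}(\ce)=H_*(\glie)$ is the identity; chasing the resulting naturality square for edge homomorphisms, together with the isomorphism $E^2_{\bullet,0}\cong H_*(\glie)$ from claim~(1), identifies the source edge homomorphism with $\pi_*$, i.e.\ with the comparison map. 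The principal obstacle throughout is the verification underlying claim~(1), namely that $\pi$ genuinely respects both primitive filtrations and induces the canonical isomorphism on $E^1_{p,0}$; once this is established, both the identification of $d^1$ with $d_{\ce}$ and the edge-map statement follow formally from naturality and the collapse of the target spectral sequence.
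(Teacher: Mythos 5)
Your proposal is correct and follows essentially the same route as the paper: the spectral sequence of the filtration from Theorem \ref{thm:prim_filt_stable}, the characteristic-zero identification $E^1 \cong S^\bullet (H_* (\lieopd (s\g), \dleib))$ together with $H_1(\lieopd(s\g),\dleib)\cong s\glie$, and the identification of the bottom row and edge homomorphism via the symmetrization map of Proposition \ref{prop:leib_ce}. The only difference is one of detail: where the paper simply cites Proposition \ref{prop:leib_ce}, you spell out why that map preserves the primitive filtrations (being $U(-)$ applied to a Lie algebra map into the abelian Lie algebra $s\glie$) and hence induces a morphism of spectral sequences whose target collapses onto the row $q=0$ --- which is precisely the verification the paper leaves implicit.
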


\begin{proof}
This is the spectral sequence associated to the filtration of the Leibniz complex $(T(s\g), \dleib)$ given by Theorem \ref{thm:prim_filt_stable}. Since $\kring$ is a field of characteristic zero, it follows that the $E^1_{p,q}$-page, which  is given by the homology  $H_* (S^\bullet (\lieopd (s\g), \dleib))$, is isomorphic to $S^\bullet (H_* (\lieopd (s \g), \dleib))$,  imposing the usual grading for the spectral sequence.

It remains to identify the row $q=0$ and its differential $d^1$. One has $E^1_{p,0}= S^p (H_1 (\lieopd (s \g), \dleib))$ and $H_1 (\lieopd (s\g), \dleib) \cong (s\glie)_1 $ by Proposition \ref{prop:Lie_hom_glie_action}. Hence $ E^1_{p,0}$ identifies as the degree $p$ part of $S^* (s\glie)$. The differential $d^1$ is induced by the Leibniz differential, hence the identification with the Chevalley-Eilenberg complex of $\glie$ follows from Proposition \ref{prop:leib_ce}, which also gives the identification of the $q=0$ edge homomorphism. 
\end{proof}

The explicit identification of the $E^1$-page in terms of $H_* (\lieopd (s \g), \dleib)$ leads to vanishing zones in the spectral sequence. For instance, one has the  following (which is applied in Theorem \ref{thm:comparison} below).

\begin{cor}
\label{cor:vanishing_zone}
For $\g$ a Leibniz algebra such that $H_* (\lieopd (s \g), \dleib)=0$ for $1 < * <N$, in the spectral sequence of Theorem \ref{thm:ss}, 
\[
E^1_{p,q} =0 \mbox{ for $0 < q < N-1$.}
\]
\end{cor}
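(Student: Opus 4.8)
The plan is to exploit the explicit computation of the $E^1$-page furnished by Theorem \ref{thm:ss}, which identifies $E^1_{p,q}$ with the component of homological degree $p+q$ of the graded symmetric power $S^p\big(H_*(\lieopd(s\g),\dleib)\big)$. Writing $H_* := H_*(\lieopd(s\g),\dleib)$, the problem thereby reduces to a purely combinatorial question: in which homological degrees can $S^p(H_*)$ be non-zero, given what we know about the graded vector space $H_*$?

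The first step is to record the structure of $H_*$. By Proposition \ref{prop:Lie_hom_glie_action} one has $H_0 = 0$ and $H_1 \cong (s\glie)_1$, concentrated in homological degree $1$; by the standing hypothesis $H_m = 0$ for $1 < m < N$. Thus $H_*$ is supported entirely in homological degree $1$ together with the degrees $\geq N$, with a gap in degrees $0$ and $2, \ldots, N-1$.

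The second step is the degree count. A homogeneous generator of $S^p(H_*)$ is a symmetrization of a product $h_1 \cdots h_p$ of $p$ homogeneous classes $h_i \in H_{d_i}$, its homological degree being $\sum_i d_i$. Since $H_0 = 0$ each $d_i \geq 1$, and in fact $d_i \in \{1\} \cup \{N, N+1, \ldots\}$. If exactly $k$ of the factors have degree $\geq N$ and the remaining $p-k$ have degree exactly $1$, then
\[
\sum_{i=1}^p d_i \;\geq\; kN + (p-k) \;=\; p + k(N-1).
\]
Hence a class in homological degree $p+q$ forces $q \geq k(N-1)$; moreover $k=0$ occurs only when all factors have degree $1$, which gives $q = 0$.

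Finally, I would conclude: in the range $0 < q < N-1$ the case $k=0$ is excluded (it yields $q=0$), while every case $k \geq 1$ requires $q \geq N-1$, which is also excluded. No decomposition of the required total degree exists, so the relevant component of $S^p(H_*)$ vanishes and $E^1_{p,q} = 0$, as claimed (the statement being vacuous when $N \leq 2$). The only point requiring care — the main obstacle, such as it is — is the clean bookkeeping of the homological degrees that $S^p$ can occupy given the gap in $H_*$; the Koszul signs entering the definition of $S^p$ play no role here, since they affect only the module structure and not which degrees are populated.
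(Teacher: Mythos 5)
Your proof is correct and follows essentially the same route as the paper: both identify $E^1_{p,q}$ with $S^p\big(H_*(\lieopd(s\g),\dleib)\big)_{p+q}$ via Theorem \ref{thm:ss}, invoke Proposition \ref{prop:Lie_hom_glie_action} together with the hypothesis to see that $H_*$ is supported in degree $1$ and degrees $\geq N$, and conclude by a degree count showing that any class in positive $q$-degree must involve a factor of degree $\geq N$ and hence has $q \geq N-1$. Your bookkeeping with the parameter $k$ simply makes explicit what the paper summarizes by saying the first potentially non-zero term in positive $q$-degree is the image of $H_N \otimes S^{p-1}(H_1)$.
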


\begin{proof}
This is a direct consequence of the identification of the $E^1$-term given in Theorem \ref{thm:ss},  as follows. The case $p=0$ is immediate.  For $p>0$, $E^1_{p,q} = S^p (H_* (\lieopd (s \g), \dleib))_{p+q}$; the first potentially non-zero term in positive $q$-degree is given by the image of $H_N   (\lieopd (s \g), \dleib) \otimes S^{p-1} (H_1 (\lieopd (s \g), \dleib)) \rightarrow S^p (H_* (\lieopd (s \g), \dleib))$ induced by the product.  This is in $q$-degree $N-1$.
\end{proof}

It is not difficult to analyse the differential $d^1$ in the spectral sequence. The following is sufficient for current purposes:

\begin{prop}
\label{prop:d1}
The differential $d^1 : E^1_{2, q} \rightarrow E^1 _{1, q}$ is zero for $q>0$. 
\end{prop}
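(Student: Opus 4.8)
The plan is to compute $d^1$ explicitly on decomposable classes and to recognize it as the $\glie$-action on $H_* := H_*(\lieopd (s\g), \dleib)$, which vanishes in the relevant degrees by Proposition \ref{prop:Lie_hom_glie_action}. By Theorem \ref{thm:ss} the source and target are $E^1_{2,q} = S^2(H_*)_{2+q}$ and $E^1_{1,q} = H_{q+1}$. Since each $\lieopd_n(s\g) \subset (s\g)^{\otimes n}$ is concentrated in homological degree $n$, a class in $H_a$ is represented by a cycle in $\lieopd_a(s\g)$ of weight (and homological degree) $a$. As $\kring$ is a field of characteristic zero, the Künneth isomorphism $H_*(S^2(\lieopd(s\g))) \cong S^2(H_*)$ lets me represent every class in $E^1_{2,q}$ by a sum of symmetric products $[X_1]\cdot[X_2]$ of homology classes, where $X_i \in \lieopd_{a_i}(s\g)$ is a cycle with $a_1 + a_2 = 2+q$ and $a_i \geq 1$; the corresponding lift to $\filt_2 T(s\g)$ is $z = X_1 \otimes X_2$, and $d^1$ of the class is read off from $\dleib z$ modulo $\filt_1 T(s\g)$.

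First I would apply equation (\ref{eqn:diff_gen}) with $n=2$. Because $X_1, X_2$ are cycles, the first (length-preserving) sum vanishes, so $\dleib z$ reduces to the filtration-lowering second sum. For $n=2$ this sum has an $i=1$ contribution involving $\dlie_{0,1}$, which is zero, and an $i=2$ contribution that occurs precisely when $X_2 = sy \in s\g$, i.e. when $a_2 = 1$, and then equals $\dlie_{a_1,1}(X_1 \otimes sy)$. Consequently $d^1$ vanishes identically on every component $H_{a_1}\cdot H_{a_2}$ with $a_1, a_2 \geq 2$: there is no weight-one factor, hence no filtration-lowering term at all, and $\dleib z = 0$. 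The only component on which $d^1$ can be nonzero is therefore $H_{q+1}\cdot H_1$, with $H_1 \cong (s\glie)_1$.

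On this remaining component I would write the lift as $z = X_1 \otimes sy$ with $X_1 \in \lieopd_{q+1}(s\g)$ a cycle and $sy \in s\g$, so that $\dleib z = \dlie_{q+1,1}(X_1 \otimes sy)$. By the description of the $\glie$-action in the proof of Proposition \ref{prop:Lie_hom_glie_action} (via equation (\ref{eqn:dlie})), this cycle represents $(-1)^{q+1}[X_1]\overline{y}$, so that $d^1([X_1]\cdot[sy]) = \pm [X_1]\overline{y}$, the action of $\overline{y} \in \glie$ on $[X_1] \in H_{q+1}$. Since $q > 0$ forces $q+1 \geq 2$, Proposition \ref{prop:Lie_hom_glie_action}(2) asserts that the $\glie$-action on $H_{q+1}$ is trivial; hence this term also vanishes and $d^1 = 0$.

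The main obstacle is the bookkeeping in the middle step: one must verify that for $q>0$ every filtration-lowering contribution to $\dleib z$ is carried by the single $\dlie_{n,1}$ term attached to a weight-one factor, and that this term computes exactly the $\glie$-action in homology rather than some other operation. A reassuring consistency check is that lifting the same symmetric class instead as $sy \otimes X_1$ makes the $\dlie_{0,1}$ term vanish and yields $d^1 = 0$ outright; comparing the two lifts reproves the triviality of the $\glie$-action on $H_{q+1}$, confirming that the computation is internally coherent.
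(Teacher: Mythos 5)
Your proof is correct, and it diverges from the paper's at the decisive step. Both arguments share the same setup: write a class in $E^1_{2,q} = S^2(H_*(\lieopd(s\g),\dleib))_{q+2}$ as a sum of products $[X_1]\cdot[X_2]$ of cycles $X_i \in \lieopd_{n_i}(s\g)$, lift to $X_1 \otimes X_2 \in \filt_2 T(s\g)$, and compute $\dleib$ of the lift via equation (\ref{eqn:diff_gen}) and Corollary \ref{cor:diff_dlie_prim}. The paper then exploits the freedom in ordering the factors: since $q>0$ forces $\max(n_1,n_2)>1$, it places the factor of weight $>1$ in the \emph{second} slot, so that Corollary \ref{cor:diff_dlie_prim} makes the lift an honest cycle in $\filt_2 T(s\g)$ and $d^1=0$ follows with no further input. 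You instead keep the weight-one factor (when there is one) in the second slot, so your lift is not a cycle; its boundary $\dlie_{q+1,1}(X_1 \otimes sy)$ is then identified, via the explicit description in the proof of Proposition \ref{prop:Lie_hom_glie_action}, with $\pm[X_1]\overline{y}$, and you invoke the triviality of the $\glie$-action on homology in degrees $\geq 2$ (Proposition \ref{prop:Lie_hom_glie_action}(2), which itself rests on Corollary \ref{cor:rr_dleib}) to conclude. This costs an extra ingredient that the paper's ordering trick avoids, but it buys a sharper statement: your computation identifies $d^1$ on the component $H_{q+1}\cdot H_1$ as (up to sign) the $\glie$-action map, which explains uniformly why $d^1$ vanishes for $q>0$ yet is nonzero on the row $q=0$, where it yields the Chevalley--Eilenberg differential. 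Note that your closing ``consistency check'' (lifting as $sy \otimes X_1$ instead) is precisely the paper's proof. One wording slip to fix: $d^1$ of the class is not read off from ``$\dleib z$ modulo $\filt_1 T(s\g)$'' (that is zero by construction), but is the homology class of $\dleib z$ in $H_{q+1}\big(\filt_1 T(s\g)/\filt_0 T(s\g)\big)$; your subsequent computation uses the correct reading, so this is cosmetic.
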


\begin{proof}
By Theorem \ref{thm:ss}, $E^1 _{1, q}$ identifies as $H_{q+1} (\lieopd (s \g), \dleib)$, in particular $E^1 _{1,0} = H_{1} (\lieopd (s \g), \dleib) \cong \glie$, by Proposition  \ref{prop:Lie_hom_glie_action}.  Likewise  $E^2_{2, q} = S^2 (H_* (\lieopd (s \g), \dleib))_{q+2}$.

The differential $d^1$ is analysed by using the methods employed in the proof of Theorem \ref{thm:prim_filt_stable}, as follows. Consider non-zero homology classes $[X_1]$, $[X_2]$ in $H_* (\lieopd (s \g), \dleib)$ and their product in $S^2 (H_* (\lieopd (s \g), \dleib))$. Thus $X_i \in \lieopd_{n_i} (s\g)$ are cycles, where $n_1, n_2>0$ (by the non-triviality hypothesis) and, without loss of generality, we may assume that $n_1 \leq n_2$. The $q$-degree of the product class in $S^2 (H_* (\lieopd (s \g), \dleib))$ is $n_1 +n_2 -2$; this implies that $ n_2>1$  if $q>0$.

The element $X_1 \otimes X_2$ lies in $\filt_2 T(s \g)$ and defines a cycle in $\filt_2 T(s \g)/\filt_1 T(s\g)$ that represents the product class in $S^2 (H_* (\lieopd (s \g), \dleib))$. Since $X_1$, $X_2$ are cycles, Corollary \ref{cor:diff_dlie_prim} gives that $X_1 \otimes X_2$ is a cycle in $\filt_2 T(s \g)$, since $n_2>1$. By the construction of the differential $d^1$ in a spectral sequence associated to a filtration, this implies that $d^1=0$.
\end{proof}

\section{Applications of the spectral sequence}
\label{sect:appli}

The spectral sequence of Theorem \ref{thm:ss} is applied to relate the behaviour of the comparison map $HL_*(\g; \kring) \rightarrow H_*(\glie; \kring)$ to vanishing zones of $H_* (\lieopd (s \leibopd (V)), \dleib)$.

\subsection{The case of a free Leibniz algebra}
\label{subsect:free}

Consider $\g := \leibopd (V)$, the free Leibniz algebra on a $\kring$-vector space, so that  one has the Leibniz complex $(T(s \leibopd (V)), \dleib)$ and its subcomplex $(\lieopd (s \leibopd (V)), \dleib)$.  
In \cite[Conjecture 1]{2019arXiv190400121P}, Pirashvili conjectured that the homology of the latter is concentrated in degree $1$. The spectral sequence of Theorem \ref{thm:ss}  gives a new proof of this conjecture:

\begin{thm}
\label{thm:liezation}
For $V$ a $\kring$-vector space, 
\[
H_* (\lieopd (s \leibopd (V)), \dleib)\cong 
\left\{ 
\begin{array}{ll}
\lieopd (V) & *=1 \\
0 & \mbox{otherwise.}
\end{array}
\right.
\]
\end{thm}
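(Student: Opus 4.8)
The plan is to compute $H_*(\lieopd(s\leibopd(V)), \dleib)$ by running the spectral sequence of Theorem \ref{thm:ss} for the free Leibniz algebra $\g = \leibopd(V)$ and exploiting the fact that its abutment, $HL_*(\leibopd(V); \kring)$, is already known from Proposition \ref{prop:hom_free_algebras}: it is $\kring$ in degree $0$, $V$ in degree $1$, and zero elsewhere. The idea is that this drastic vanishing of the target constrains the $E^1$-page so severely that, working through the rows $q=0$ and $q\geq 1$ separately, one can back out the homology of the subcomplex $(\lieopd(s\g), \dleib)$.

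\textbf{First steps.} I would begin by recording what is already known unconditionally. By Proposition \ref{prop:Lie_hom_glie_action}, $H_0(\lieopd(s\g), \dleib) = 0$ and $H_1(\lieopd(s\g), \dleib) \cong \glie$; here $\glie = (\leibopd(V))_{\mathrm{Lie}} \cong \lieopd(V)$ by Lemma \ref{lem:liezation_free_leib}. So the claimed answer in degrees $0$ and $1$ is immediate, and the entire content is the vanishing of $H_*(\lieopd(s\g), \dleib)$ for $* \geq 2$. The strategy is an induction on the homological degree $N$: suppose $H_*(\lieopd(s\g), \dleib) = 0$ for $1 < * < N$ (vacuous when $N=2$); I aim to deduce $H_N(\lieopd(s\g), \dleib) = 0$. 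Under this inductive hypothesis, Corollary \ref{cor:vanishing_zone} gives $E^1_{p,q} = 0$ for $0 < q < N-1$, so the first potentially nonzero entries off the bottom row sit in the row $q = N-1$.

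\textbf{The core argument.} On the bottom row $q=0$, Theorem \ref{thm:ss}(1) identifies $(E^1_{p,0}, d^1)$ with the Chevalley--Eilenberg complex $\ce(\glie) = \ce(\lieopd(V))$, whose homology computes $H_*(\lieopd(V); \kring)$ --- again known from Proposition \ref{prop:hom_free_algebras} to be $\kring, V, 0, 0, \ldots$. So after running $d^1$ on the bottom row, the $E^2_{p,0}$ terms are concentrated in $p = 0, 1$. Meanwhile $E^1_{1,q} \cong H_{q+1}(\lieopd(s\g), \dleib)$, so $E^1_{1, N-1} \cong H_N(\lieopd(s\g), \dleib)$, the group I want to kill, sits in column $p=1$ at height $q = N-1$. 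I would track this class through the spectral sequence: the incoming differentials land in column $p=1$ from column $p=2$, and Proposition \ref{prop:d1} shows $d^1 : E^1_{2,q} \to E^1_{1,q}$ is zero for $q > 0$; higher $d^r$ differentials into $E^r_{1,N-1}$ originate from $E^r_{1+r, N-r}$, which for $r \geq 2$ lie in rows $0 < q < N-1$ (hence vanish by the inductive hypothesis via Corollary \ref{cor:vanishing_zone}) or are otherwise controlled. The outgoing differential from $E^r_{1,N-1}$ lands in $E^r_{1-r, N-2+r}$, which is zero since $1 - r < 0$ for $r \geq 2$. Combining these, $E^\infty_{1, N-1} \cong H_N(\lieopd(s\g), \dleib)$, and this $E^\infty$ entry contributes to the abutment $HL_N(\leibopd(V); \kring)$, which is $0$ for $N \geq 2$; one must check no other $E^\infty_{p,q}$ with $p + q = N$ can interfere, which follows from the vanishing zone placing everything outside the bottom two rows above the line $p + q = N$ in the relevant range. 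Hence $H_N(\lieopd(s\g), \dleib) = 0$, completing the induction.

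\textbf{The main obstacle} will be the bookkeeping of the higher differentials in and out of $E^r_{1, N-1}$: I must verify that no surviving differential can prevent the class from being detected in $E^\infty$, and in particular must rule out a nonzero $d^2 : E^2_{3, N-2} \to E^2_{1, N-1}$ or similar interactions near the edge of the vanishing zone. The delicate point is that $E^2_{3, N-2}$ lies in row $q = N-2$, which is inside the vanishing range $0 < q < N-1$ only when $N - 2 < N - 1$, i.e. always, so Corollary \ref{cor:vanishing_zone} does kill it --- but I would need to confirm the indexing carefully, since an off-by-one error here would sink the argument. The cleanest formulation may be to argue directly that, in the range $p + q = N$, the only $E^1$-entries are on the bottom row (contributing to $H_*(\lieopd(V))$) and the single entry $E^1_{1, N-1}$, so that the Euler-characteristic/comparison-map bookkeeping forces $E^1_{1,N-1}$ to die into the known abutment.
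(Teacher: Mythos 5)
Your proposal is correct and follows essentially the same route as the paper's proof: the same spectral sequence from Theorem \ref{thm:ss}, with Proposition \ref{prop:hom_free_algebras} pinning down both the bottom row and the abutment, Corollary \ref{cor:vanishing_zone} clearing the rows $0<q<N-1$, and Proposition \ref{prop:d1} killing the incoming $d^1$; the only difference is that the paper phrases your induction as a minimal-counterexample argument (minimal $\alpha>0$ with $H_{\alpha+1}(\lieopd(s\leibopd(V)),\dleib)\neq 0$). Two bookkeeping points to tighten, both already within your toolkit: the outgoing $d^1$ on $E^1_{1,N-1}$ vanishes because $E^1_{0,q}=0$ for $q>0$ (part of Theorem \ref{thm:ss}), and in the edge case $N=2$ the source $E^2_{3,N-2}=E^2_{3,0}$ of the incoming $d^2$ lies on the bottom row, so it is killed by $E^2_{p,0}=0$ for $p>1$ rather than by Corollary \ref{cor:vanishing_zone}, whose range requires $q>0$.
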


\begin{proof}
Use the spectral sequence of Theorem \ref{thm:ss} for $\g = \leibopd (V)$; this calculates the Leibniz homology $HL_*(\leibopd(V); \kring)$ starting from $H_* (\lieopd (s \leibopd (V)), \dleib)$.
In particular, $E^1_{0,*}=0$ for $*>0$ and $E^1_{1,*}$ identifies as $H_{*+1} (\lieopd (s \leibopd (V)), \dleib)$. 

Here, the information given by Proposition \ref{prop:hom_free_algebras} on the homology of the respective free algebras  implies the following:
\begin{enumerate}
\item 
$E^2_{0,0} = \kring$, $E^2_{1,0} = V$ and $E^2_{p,0}=0$ for $p>1$, since this is the Lie homology of the free Lie algebra $\lieopd (V)$; 
\item 
$E^\infty_{0,0} = \kring$, $E^\infty_{1,0} = V$ and $E^\infty _{p,q}=0$ otherwise, since this is an associated graded of the Leibniz homology of $\leibopd (V)$ and the $q=0$ edge homomorphism is an isomorphism (using Proposition \ref{prop:hom_free_algebras}).
\end{enumerate}

Suppose that $H_{*+1} (\lieopd (s \leibopd(V)), \dleib)$ is non-zero for some $*>0$ and take $0< \alpha < \infty$ minimal such that $ 
H_{\alpha +1} (\lieopd (s \leibopd(V)), \dleib)\neq 0$. Thus, Corollary \ref{cor:vanishing_zone} gives $E^1 _{p, q} =0$ for $1 \leq q \leq \alpha -1$ and, by construction, $E^1_{1,\alpha} \neq 0$.  

Proposition \ref{prop:d1} gives that $d^1 : E^1_{2, \alpha} \rightarrow E^1 _{1, \alpha}$ is zero (using that $\alpha>1$), so that $E^2 _{1, \alpha} \neq 0$. This is a contradiction, since the
vanishing $E^2_{p,0}=0$ for $p>1$ and the choice of $\alpha$ ensures that $E^2 _{1, \alpha}$ survives the spectral sequence. The result follows.
\end{proof}

\begin{rem}
This result was first  proved by Mostovoy (see \cite[Proposition 13]{2019arXiv191003754M}) by different methods, based on his interpretation of Leibniz algebras in terms of differential graded Lie algebras.
\end{rem}

\subsection{Connectivity of the comparison map}

The method used in the proof of Theorem \ref{thm:liezation} generalizes to give the following:

\begin{thm}
\label{thm:comparison}
Let $\g$ be a Leibniz algebra. 
\begin{enumerate}
\item 
The comparison map $HL_* (\g; \kring) \rightarrow H_* (\glie; \kring)$ is an isomorphism if and only if $H_* (\lieopd (s\g), \dleib)= 0$ for all $*>1$. 
\item 
More generally, the following conditions are equivalent:
\begin{enumerate}
\item 
 $H_* (\lieopd (s\g), \dleib))= 0$ for $1 < * < N$  and $H_N (\lieopd(s \g), \dleib) \neq 0$ for some $N\in \nat$. 
 \item 
The comparison map $HL_* (\g; \kring) \rightarrow H_* (\glie; \kring)$ is an isomorphism for $*<N$ and there is an exact sequence
\[
HL_{N+1} (\glie; \kring)
\rightarrow 
H_{N+1} (\glie; \kring) 
\rightarrow 
H_N (\lieopd(s \g), \dleib)
\rightarrow 
HL_N(\g; \kring) 
\rightarrow 
H_N(\glie; \kring) 
\rightarrow 
0
\]
and at least one of the following holds:
\begin{enumerate}
\item 
 $HL_N(\g; \kring) 
\rightarrow 
H_N(\glie; \kring) $ is not an isomorphism;
\item 
$HL_{N+1} (\glie; \kring)
\rightarrow 
H_{N+1} (\glie; \kring) $ is not surjective.
\end{enumerate}
\end{enumerate}
\end{enumerate}
\end{thm}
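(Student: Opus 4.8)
The plan is to run essentially the same machine as the proof of Theorem \ref{thm:liezation}, but now for a general Leibniz algebra $\g$, reading off consequences of the vanishing hypothesis on $H_*(\lieopd(s\g),\dleib)$ via the spectral sequence of Theorem \ref{thm:ss}. For part (1), I would first establish the easy direction. If $H_*(\lieopd(s\g),\dleib)=0$ for all $*>1$, then by the computation $E^1_{p,q}=S^p(H_*(\lieopd(s\g),\dleib))_{p+q}$ (Theorem \ref{thm:ss}) together with Proposition \ref{prop:Lie_hom_glie_action} giving $H_1\cong s\glie$, the whole $E^1$-page is concentrated in the row $q=0$, which is precisely the Chevalley--Eilenberg complex $\ce(\glie)$. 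A spectral sequence concentrated in one row collapses at $E^2$, and the $q=0$ edge homomorphism, which Theorem \ref{thm:ss} identifies with the comparison map, is then an isomorphism.

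For the converse direction of (1) I would argue by contradiction, exactly mimicking the final paragraphs of the proof of Theorem \ref{thm:liezation}. Suppose the comparison map is an isomorphism but $H_*(\lieopd(s\g),\dleib)\neq 0$ for some $*>1$; take $N>1$ minimal with $H_N\neq 0$. Corollary \ref{cor:vanishing_zone} gives $E^1_{p,q}=0$ for $0<q<N-1$, and $E^1_{1,N-1}\cong H_N\neq 0$ is the first nonzero term off the bottom row. Proposition \ref{prop:d1} kills $d^1\colon E^1_{2,N-1}\to E^1_{1,N-1}$ (using $N-1>0$, i.e. $N>1$), so $E^2_{1,N-1}\neq 0$. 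Since this class sits in the leftmost possibly-surviving column and the bottom row above $p=1$ vanishes at $E^2$ (because the comparison map is an isomorphism, forcing $E^\infty=E^2$ in the row $q=0$ to agree with $\ce$-homology), no later differential can hit or leave it, so it survives to $E^\infty$ and contaminates $HL_{N}(\g;\kring)$ strictly beyond the image of $H_N(\glie;\kring)$, contradicting that the comparison map is an isomorphism in degree $N$.

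For part (2) I would extract the five-term exact sequence from the spectral sequence under the hypothesis (a). The vanishing $H_*=0$ for $1<*<N$ gives, via Corollary \ref{cor:vanishing_zone}, that $E^1_{p,q}=0$ for $0<q<N-1$; hence all columns between the $q=0$ row and $q=N-1$ are empty, so the comparison map (the $q=0$ edge homomorphism) is an isomorphism for $*<N$ and no differentials interfere below total degree $N$. In total degrees $N$ and $N+1$ the only contributing terms off the bottom row are $E^1_{1,N-1}\cong H_N(\lieopd(s\g),\dleib)$ and its images under the relevant differentials; assembling the edge map $\ce$-homology $=H_*(\glie;\kring)$ on the $q=0$ row with this single off-row term yields the stated six-term exact sequence by the standard exact sequence of low-degree terms for the filtration. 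The trailing alternative (i)/(ii) is then forced: if $H_N(\lieopd(s\g),\dleib)\neq 0$, exactness means this group cannot inject trivially, so either the map $HL_N\to H_N(\glie)$ fails to be an isomorphism or the map $HL_{N+1}(\glie)\to H_{N+1}(\glie)$ fails to be surjective (otherwise the nonzero group $H_N$ would be squeezed to zero in the exact sequence). Conversely, (b)$\Rightarrow$(a) follows by reversing the reading: the exact sequence with a genuine failure of (i) or (ii) forces $H_N\neq 0$, and the isomorphism range for $*<N$ forces the vanishing of $H_*$ for $1<*<N$ by applying part (1) to a truncation, or directly by the same contradiction argument as above applied to a hypothetical smaller nonvanishing degree.

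\textbf{The main obstacle} I expect is the bookkeeping of the exact sequence of low-degree terms: one must check that under hypothesis (a) the \emph{only} term obstructing collapse in the relevant range is the single group $E^1_{1,N-1}\cong H_N(\lieopd(s\g),\dleib)$, and that the connecting maps in the spectral sequence (the transgression-type differentials landing in the $q=0$ row) assemble precisely into the displayed maps $H_{N+1}(\glie;\kring)\to H_N(\lieopd(s\g),\dleib)\to HL_N(\g;\kring)$. This requires carefully controlling that no higher differentials $d^r$ with $r\geq 2$ introduce additional contributions in total degrees $N$ and $N+1$; the vanishing zone of Corollary \ref{cor:vanishing_zone} is exactly what rules these out, but verifying that the surviving terms match the Chevalley--Eilenberg homology $H_*(\glie;\kring)$ via the edge homomorphism of Theorem \ref{thm:ss}, and that the filtration-theoretic five-term sequence is genuinely exact (not merely a complex), is where the real work lies.
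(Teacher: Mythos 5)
Your proposal follows the paper's proof almost step for step: the paper likewise runs the spectral sequence of Theorem \ref{thm:ss}, invokes Corollary \ref{cor:vanishing_zone} and Proposition \ref{prop:d1}, deduces from the isomorphism hypothesis that every differential $d^r$, $r\geq 2$, originating on the row $q=0$ vanishes, and extracts the five-term exact sequence from the first possibly non-trivial such differential $d^{N}\colon E^{N}_{N+1,0}\to E^{N}_{1,N-1}$. In substance your argument is correct, but one assertion is false as written and must be repaired. In the converse direction of part (1) you claim that ``the bottom row above $p=1$ vanishes at $E^2$''. That is a feature of the free case of Theorem \ref{thm:liezation}, where $E^2_{p,0}=H_p(\lieopd(V);\kring)=0$ for $p>1$; for a general $\g$ one has $E^2_{p,0}\cong H_p(\glie;\kring)$, which has no reason to vanish. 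What is true --- and what your own parenthetical actually supplies, and what the paper uses --- is that surjectivity of the comparison map forces $E^\infty_{p,0}=E^2_{p,0}$, hence all differentials \emph{leaving} the row $q=0$ are zero; combined with the vanishing zone, this is what prevents anything from hitting $E^2_{1,N-1}$, while injectivity of the comparison map forces $E^\infty_{p,q}=0$ for $q>0$, yielding the contradiction. So your conclusion stands, but only via the surjectivity statement, not via vanishing of the bottom row.

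Two smaller repairs. First, in total degree $N+1$ there are in general further non-zero terms off the bottom row (for instance $E^1_{1,N}=H_{N+1}(\lieopd(s\g),\dleib)$, and $E^1_{2,N-1}$ contains the image of $H_1\otimes H_N$), so your claim that $E^1_{1,N-1}$ is the only off-row contribution in degrees $N$ and $N+1$ is wrong; it is harmless, however, because the five-term sequence needs only the total-degree-$N$ terms together with $E^\infty_{N+1,0}=\ker d^{N}$, and the latter is unaffected by those extra terms. Second, in (b)$\Rightarrow$(a) your phrase ``applying part (1) to a truncation'' has no meaning, and the direct contradiction argument has an edge case: for a hypothetical minimal non-vanishing degree $M$ with $1<M<N$, the isomorphism range $*<N$ kills the incoming differential $d^{M}\colon E^{M}_{M+1,0}\to E^{M}_{1,M-1}$ only when $M+1<N$; when $M=N-1$ you must instead use the surjectivity of $HL_N(\g;\kring)\to H_N(\glie;\kring)$, which is part of the exact sequence assumed in (b).
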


\begin{proof}
The first case is proved as for Theorem \ref{thm:liezation} by using the hypothesis that the comparison map is an isomorphism to deduce that the differentials $d^r$ on the row $q=0$ are zero for $r\geq 2$. 

The second case refines this argument using Corollary \ref{cor:vanishing_zone}. The first possible non-trivial differential originating on $q=0$ is $d^{N}: E^{N}_{N+1,0} \rightarrow E^N_{1,N-1}$, where $E^N_{N+1,0} \cong E^2_{N+1,0}$ and $E^N_{1, N-1} \cong E^1 _{N-1,0} \neq 0$. This gives the five term exact sequence and the equivalence of the conditions.
\end{proof}

\part{The natural $L_\infty$-structure on Pirashvili's complex}

\section{Constructing  $L_\infty$-structures}
\label{sect:lift}

This Section reviews $L_\infty$-structures and a version of the homotopy transfer theorem, which allows a `$3$-stub' of an $L_\infty$-structure to be extended to a full structure (see Proposition \ref{prop:extend}). A refinement of this argument is employed in the proof of Theorem \ref{thm:main}.

\subsection{$L_\infty$-structures}

Let $W$ be a  graded $\kring$-vector space. The symmetric algebra $S^* (sW)$ is formed using the symmetric monoidal structure on graded $\kring$-vector spaces with Koszul signs, hence is graded commutative. It has the structure of a graded bicommutative Hopf algebra primitively generated by $sW$; in particular one has the coproduct 
\[
\Delta : S^* (sW) \rightarrow S^* (sW) \otimes S^* (sW).
\]

\begin{defn}
\label{defn:linf}
(Cf. \cite[Chapter 7]{MR2931635} for example.)
An $L_\infty$-structure upon $W$ is $(S^* (sW), d)$, where $d : S^* (sW) \rightarrow  S^* (sW)$  is a coderivation of degree $-1$ and a differential (i.e., $d^2=0$). 
\end{defn}

The coderivation property implies that $d$ is determined by the composite $\dbar$:
\[
S^* (sW) \stackrel{d} {\rightarrow}  S^* (sW) \twoheadrightarrow sW,
\]
where the second map is the projection $S^* (sW) \twoheadrightarrow sW = S^1 (sW)$ onto the linear part. The full coderivation $d$ is recovered as the composite:
\[
S^* (sW) 
\stackrel{\Delta} {\rightarrow} 
S^* (sW) \otimes S^* (sW) 
\stackrel{\dbar \otimes \id}{\rightarrow} 
 sW \otimes S^* (sW) 
\stackrel{\mu}{\rightarrow} 
S^* (sW),
\]
where $\mu$ denotes the product of $S^* (sW)$.

\begin{rem}
\label{rem:coderiv_extn}
The above extension is given by the natural  $\kring$-linear map 
$$ 
\hom_\kring (S^* (sW), sW ) 
\rightarrow 
\hom_\kring (S^*(sW), S^* (sW))
$$  
that is constructed using the natural transformations $\Delta$ and $\mu$. It has a natural retract induced by the projection $S^* (sW) \twoheadrightarrow sW$.
\end{rem}

The following is standard:

\begin{lem}
\label{lem:diff_coderiv}
For $d : S^* (sW) 
\stackrel{d}{\rightarrow}
S^* (sW)$ a coderivation, $d$ is a differential if and only if the following composite vanishes:
\[
S^* (sW) 
\stackrel{d}{\rightarrow}
 S^* (sW) 
\stackrel{\dbar}{\rightarrow} 
sW.
\]
\end{lem}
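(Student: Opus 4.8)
The plan is to exploit the fact, recalled just before the statement (see Remark \ref{rem:coderiv_extn}), that a coderivation $d$ of $S^*(sW)$ is completely and freely determined by its corestriction $\dbar : S^*(sW) \to sW$ onto the space of primitives. More precisely, the assignment $d \mapsto \dbar$ is a bijection (with inverse given by the explicit $\Delta$–$\mu$ formula) between coderivations of degree $-1$ and $\kring$-linear maps $S^*(sW) \to sW$ of degree $-1$. The key observation is that $d^2$ is again a coderivation, this time of degree $-2$, so by the same uniqueness principle $d^2$ is determined by its own corestriction. Hence I would first establish that the corestriction of $d^2$ equals precisely the composite $\dbar \circ d$ displayed in the statement.

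The forward implication is then immediate: if $d^2 = 0$, then its corestriction $\dbar \circ d$ vanishes. For the converse, suppose $\dbar \circ d = 0$; I want to conclude $d^2 = 0$. Since $d^2$ is a coderivation of degree $-2$ and its corestriction to $sW$ is (by the identification just mentioned) exactly $\dbar \circ d = 0$, the freeness statement of Remark \ref{rem:coderiv_extn} forces $d^2 = 0$. Thus the whole argument reduces to the single computational point that the corestriction of the composite coderivation $d^2$ coincides with $\dbar \circ d$.

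I expect this computational point to be the only real content, and it is where I would be most careful. The issue is that $d \circ d$, built by composing two coderivations, is manifestly a coderivation (composites need not be, but here degree reasons and the bicommutative Hopf structure cooperate — in fact the graded commutator of two coderivations is a coderivation, and $[d,d] = 2 d^2$ since $d$ has odd degree, so $d^2 = \tfrac12 [d,d]$ is a coderivation in characteristic zero). Granting that $d^2$ is a coderivation, one reads off its corestriction by post-composing with the projection $S^*(sW) \twoheadrightarrow sW$; the outer $d$ contributes exactly the factor $\dbar$, while the inner $d$ is applied first, yielding $\dbar \circ d$. The main obstacle is therefore bookkeeping: verifying that no correction terms survive when projecting $d^2$ onto the linear part, i.e.\ that the non-primitive components of the intermediate $d(S^*(sW))$ do not contribute after the second application of $d$ followed by the projection. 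This follows from the coderivation property of $d$ together with the fact that $\Delta$ is coassociative and the counit kills positive-weight elements, and I would record it as the crux of the verification rather than expanding every Koszul sign.
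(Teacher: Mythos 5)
The paper does not actually prove this lemma --- it is stated with the remark ``The following is standard'' and no argument is given --- so the comparison here is against the standard argument the paper is implicitly invoking, and your proposal is precisely that argument, correctly executed: $d^2$ is a coderivation (of degree $-2$), a coderivation of $S^*(sW)$ vanishes if and only if its corestriction to $sW$ vanishes, and the corestriction of $d^2$ is $\dbar\circ d$. Two small remarks. First, your closing worry about ``correction terms'' surviving the projection is a non-issue: the corestriction of $d^2$ is by definition $\pi\circ d\circ d=(\pi\circ d)\circ d=\dbar\circ d$ by associativity of composition, with nothing to check; the genuine content of the converse direction lies entirely in the two facts that $d^2$ is a coderivation (your commutator argument, or equivalently the direct cancellation of the cross terms $d\otimes d$ occurring with opposite Koszul signs because $|d|$ is odd, handles this) and that a coderivation with vanishing corestriction is zero. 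Second, for that last fact you need the uniqueness principle for coderivations of degree $-2$, whereas the paper's surrounding text (Definition \ref{defn:linf} and Remark \ref{rem:coderiv_extn}) states it only for the degree $-1$ coderivation defining the $L_\infty$-structure; this is harmless, since the proof that a coderivation of $S^*(sW)$ is determined by its corestriction (induction on the weight, using conilpotence) is independent of the degree, but it would be worth saying so explicitly rather than citing the remark as if it covered your case verbatim.
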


\begin{rem}
For $0<n \in \nat$,
\begin{enumerate}
\item 
the differential $d$ restricts to
$
S^n (sW) 
\rightarrow 
\bigoplus_{1 \leq k \leq n}  S^k (sW)$;
\item 
the component $k=n$ is the morphism $ S^n (sW) \rightarrow  S^n (sW)$ induced by the differential of the  complex $(sW, d)$; 
\item 
the component $k=1$ identifies with $\dbar : S^n (sW) \rightarrow sW$.
\end{enumerate}
\end{rem}

\begin{nota}
\label{nota:d_nk}
For an $L_\infty$-structure $(S^* (sW),d)$, write $d_{n,k}$ for the component $S^n (sW) 
\rightarrow 
 S^k (sW)$ of the differential. 
\end{nota}

\begin{rem}
\label{rem:ch_cx}
The component $d_{1,1}$ of $d$ yields a chain complex $(sW, d_{1,1})$, which may be denoted simply $(sW,d)$ when no confusion can result.
\end{rem}

\begin{lem}
\label{lem:d2_nl}
Let $(S^*(sW),d)$ be an $L_\infty$-structure. Then, for all $1\leq l \leq n \in \nat$, the composite 
\[
S^n (sW) 
\stackrel{d}{\rightarrow}
\bigoplus_{1 \leq k \leq n}
 S^k (sW)
\stackrel{\bigoplus d_{k,l} }{\rightarrow}
 S^l (sW)
\]
is zero.
\end{lem}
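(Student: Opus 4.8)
The plan is to identify the composite in the statement as a single ``matrix entry'' of the endomorphism $d^{2}$ with respect to the length grading $S^* (sW) = \bigoplus_{k} S^k (sW)$, and then to invoke the defining property $d^2 = 0$ of an $L_\infty$-structure (Definition \ref{defn:linf}).

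First I would record the triangularity of $d$ coming from Notation \ref{nota:d_nk}: on $S^n (sW)$ the coderivation restricts to $\bigoplus_{1 \leq k \leq n} d_{n,k}$, so that $d_{n,k} \colon S^n (sW) \to S^k (sW)$ is zero unless $1 \leq k \leq n$; in particular $d$ does not increase length. Next I would compute, for $x \in S^n (sW)$, the $S^l (sW)$-component of $d^2 (x) = d(d(x))$. Writing $d(x) = \sum_{1 \leq k \leq n} d_{n,k}(x)$ and applying $d$ termwise, the part landing in $S^l (sW)$ is exactly
\[
\sum_{l \leq k \leq n} d_{k,l}\bigl( d_{n,k}(x) \bigr),
\]
the index range being forced by the triangularity above. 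This is precisely the composite of the statement; equivalently, the composite equals the $(n,l)$-component of $d^2$ relative to the length grading.

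It then remains only to observe that, since an $L_\infty$-structure is by definition a differential, $d^2 = 0$ as an endomorphism of $S^* (sW)$, and a linear endomorphism of a direct sum vanishes if and only if each of its components relative to that decomposition does. Hence the $(n,l)$-component vanishes for every $1 \leq l \leq n$, giving the claim. I do not expect any real obstacle here beyond the indexing bookkeeping: the one point to get right is the range $l \leq k \leq n$ of the summation, which is dictated by the vanishing of $d_{n,k}$ outside $1 \leq k \leq n$. It is worth noting that, unlike Lemma \ref{lem:diff_coderiv}, this statement does not use the coderivation property of $d$: it simply extracts all graded components of the relation $d^2 = 0$, rather than the linear one alone.
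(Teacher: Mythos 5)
Your proposal is correct: the composite in the statement is precisely the $(n,l)$-component of $d^2$ with respect to the length decomposition of $S^*(sW)$ (the index range $l \leq k \leq n$ being forced by the triangularity recorded in Notation \ref{nota:d_nk}), so it vanishes because $d^2=0$. The paper states this lemma without proof as standard, and your component-extraction argument is exactly the intended one.
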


\subsection{Morphisms and isomorphisms of $L_\infty$-structures} 
This section reviews briefly the elements of the theory of $L_\infty$-isomorphisms that are required here.  
 In accordance with Definition \ref{defn:linf}, we use the following:

\begin{defn}
\label{defn:morphism_L_infty}
(Cf. \cite[Chapter 7]{MR2931635} for example.)
A morphism of $L_\infty$-structures from $(S^* (sW), d^W)$ to $(S^* (sU),d^U)$
is a morphism of differential coalgebras  $
  F : (S^* (sW), d^W) \rightarrow (S^* (sU),d^U)$.
 \end{defn}
 
\begin{rem} 
Giving  a morphism $F : S^* (sW) \rightarrow S^* (sU)$ of coalgebras is equivalent to specifying the  components $
F_{n,1} : S^n (sW) \rightarrow sU$ for $1 \leq n \in \nat$; these are linear maps of degree zero.

The morphism $F$ defines a morphism of $L_\infty$-structures if and only if the components $F_{*,1}$ satisfy the usual relations  that encode the compatibility with the differential (see  \cite[Definition 7.1]{MR2931635}, for example).
\end{rem}

A morphism  $F : (S^* (sW), d^W) \rightarrow (S^* (sU),d^U)$ is an isomorphism if and only if $F_{1,1} : sW \rightarrow sU$ is an isomorphism and this gives an isomorphism between the underlying chain complexes $(sW, d_{1,1}^W) \cong (sU, d_{1,1}^U)$ (see \cite[Proposition 7.5]{MR2931635}, for example).

 We focus on the case $W=U$, restricting to isomorphisms such that $F_{1,1}$ is the identity $\id_{sW}$. Rewriting the differentials $d^W$ and $d^U$ as $d$ and $d'$ respectively, the existence of such an isomorphism $F$ implies that $d_{1,1}=d'_{1,1}$.

An isomorphism of coalgebras allows $L_\infty$-structures to be transported (cf. \cite[Exercise 7.3]{MR2931635}, for example):

\begin{lem}
\label{lem:transfer_L_infty}
For $(S^*(sW), d)$ an $L_\infty$-structure and an isomorphism of coalgebras $F: S^*(sW) \stackrel{\cong}{\rightarrow} S^*(sW)$ such that $F_{1,1} = \id_{sW}$, there exists a unique $L_\infty$-structure $(S^*(sW), d')$  with $(sW,d'_{1,1})= (sW,d_{1,1})$ such that $F$ yields an isomorphism of $L_\infty$-algebras
$F: (S^*(sW), d) \rightarrow (S^*(sW), d')$.
\end{lem}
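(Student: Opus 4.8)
The plan is to transport the structure by conjugation: set $d' := F \circ d \circ F^{-1}$ and verify that this is the unique coderivation differential making $F$ a morphism of differential coalgebras. Since $F$ is an isomorphism of coalgebras with $F_{1,1} = \id_{sW}$, it is in particular invertible, and its inverse $F^{-1}$ is again a morphism of coalgebras; thus $F^{-1}$ satisfies $\Delta F^{-1} = (F^{-1} \otimes F^{-1}) \Delta$ alongside the corresponding relation $\Delta F = (F \otimes F)\Delta$ for $F$.

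First I would check that $d'$ is a coderivation. Using that $F$ and $F^{-1}$ are coalgebra morphisms (both of degree zero, so that the conjugation introduces no Koszul signs), one computes
\begin{align*}
\Delta d'
&= (F \otimes F)\, \Delta\, d\, F^{-1} \\
&= (F \otimes F)(d \otimes \id + \id \otimes d)\, \Delta\, F^{-1} \\
&= (F \otimes F)(d \otimes \id + \id \otimes d)(F^{-1} \otimes F^{-1})\, \Delta \\
&= (d' \otimes \id + \id \otimes d')\, \Delta,
\end{align*}
using the coderivation property of $d$ in the second line and $FF^{-1} = \id$ in the last. Hence $d'$ is a coderivation. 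It is moreover a differential, since $(d')^2 = F d^2 F^{-1} = 0$. Therefore $(S^* (sW), d')$ is an $L_\infty$-structure in the sense of Definition \ref{defn:linf}.

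By construction $d' F = F d$, so $F$ is a chain map and hence a morphism of differential coalgebras $(S^*(sW),d) \to (S^*(sW),d')$; being an isomorphism of coalgebras with invertible linear part $F_{1,1}$, it is an isomorphism of $L_\infty$-structures. The compatibility of $F$ with the differentials in linear weight reads $F_{1,1} d_{1,1} = d'_{1,1} F_{1,1}$, which together with $F_{1,1} = \id_{sW}$ gives $(sW, d'_{1,1}) = (sW, d_{1,1})$, as required.

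Finally I would address uniqueness. If $(S^*(sW), d'')$ is any $L_\infty$-structure for which $F$ is an isomorphism of $L_\infty$-structures, then $F$ is in particular a chain map, so $d'' F = F d$; since $F$ is invertible this forces $d'' = F d F^{-1} = d'$. The main (and only mildly delicate) obstacle is the coderivation verification: one must confirm that $F^{-1}$ is genuinely a morphism of coalgebras and that, $F$ being of degree zero, the conjugation is compatible with the Koszul-signed coproduct so that no sign discrepancy arises; once this is secured, all the remaining steps are formal.
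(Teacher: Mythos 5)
Your proof is correct: transporting the structure by conjugation, $d' := F \circ d \circ F^{-1}$, and checking that it is a coderivation differential with the same linear part, with uniqueness forced by the chain-map condition $d''F = Fd$, is exactly the standard argument. The paper states this lemma without proof (citing it as an exercise in Markl's book), and your argument — including the observation that no Koszul signs intervene because $F$ and $F^{-1}$ have degree zero — is precisely the intended one.
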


Isomorphisms $F$ such that $F_{1,1} = \id _{sW}$ and $F_{i,1}=0$ for $1 < i \leq n$, where $2 \leq n \in \nat$, play an important rôle in the theory. The following describes how they affect the structure morphisms $d_{i,1}$ for $1 \leq i \leq n$:

\begin{lem}
\label{lem:basic_isos_L_infty}
Let $F : (S^*(sW),d) \stackrel{\cong}{\rightarrow} (S^*(sW),d')$ be an isomorphism of $L_\infty$-structures such that $F_{1,1} = \id _{sW}$ and $F_{i,1}=0$ for $1 < i < n$, where $2 \leq n \in \nat$. Then $d'_{i,1} = d_{i,1}$ for $1 \leq i < n$ and
\begin{eqnarray*}
\label{eqn:d_d'}
d'_{n,1} + d'_{1,1} \circ F_{n,1} = 
d'_{n,1} + d_{1,1} \circ F_{n,1} = 
d_{n,1} + F_{n,1} \circ d_{n,n}.
\end{eqnarray*}
Here $d_{n,n} : S^n(sW) \rightarrow S^n (sW)$ is determined by $d_{1,1}$ by the coderivation property. 
\end{lem}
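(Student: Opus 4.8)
The plan is to exploit the fact that $F$ is a morphism of differential coalgebras, which amounts to the single relation $d' \circ F = F \circ d$, and to extract the component identities by post-composing with the linear projection $\pi : S^*(sW) \twoheadrightarrow sW$ and restricting to each symmetric power $S^i(sW)$. Throughout I write $d_{n,k}$ (resp.\ $F_{n,m}$) for the $S^k$- (resp.\ $S^m$-) component of $d$ (resp.\ of $F$) on $S^n(sW)$, so that by construction $\pi \circ d|_{S^k} = d_{k,1}$ and $\pi \circ F|_{S^k} = F_{k,1}$, and similarly for $d'$.

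First I would record the standard description of the components of a morphism of cofree cocommutative coalgebras (as recalled just before the statement): each $F_{n,m} : S^n(sW) \to S^m(sW)$ is a Koszul-signed symmetrized sum of products of the corestrictions $F_{k_1,1}, \ldots, F_{k_m,1}$ over compositions $k_1 + \cdots + k_m = n$ with each $k_j \geq 1$. Imposing the hypotheses $F_{1,1} = \id$ and $F_{i,1} = 0$ for $1 < i < n$, a summand can be non-zero only if every $k_j$ lies in $\{1, n\}$; since the $k_j$ sum to $n$, this leaves exactly two possibilities, namely all $k_j = 1$ (whence $m = n$ and $F_{n,n} = S^n(\id) = \id$) or a single block $k_1 = n$ (whence $m = 1$). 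Consequently $F_{n,m} = 0$ for $1 < m < n$, and the identical reasoning on $S^i$ with $i < n$ forces $F_{i,i} = \id$ and $F_{i,m} = 0$ for $m < i$, since then every $k_j \leq i < n$.

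With these vanishings in hand, the two sides of $\pi \circ (d' \circ F) = \pi \circ (F \circ d)$ on $S^n(sW)$ each collapse to two terms. On the one hand,
\[
\pi \circ d' \circ F|_{S^n} = \sum_m d'_{m,1} \circ F_{n,m} = d'_{n,1} + d'_{1,1} \circ F_{n,1},
\]
using $F_{n,n} = \id$; on the other,
\[
\pi \circ F \circ d|_{S^n} = \sum_k F_{k,1} \circ d_{n,k} = d_{n,1} + F_{n,1} \circ d_{n,n},
\]
using $F_{1,1} = \id$. Equating these gives the displayed identity; the middle equality then follows from $d_{1,1} = d'_{1,1}$, already noted before the statement (and itself the case $i=1$ of this computation). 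Running the same argument on $S^i$ for $1 \leq i < n$, where only the $m = i$ term survives on the left and only the $k = 1$ term on the right, yields $d'_{i,1} = d_{i,1}$.

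The main obstacle is the bookkeeping of the first step: pinning down the component formula for $F_{n,m}$ and checking that the vanishing hypotheses collapse the convolution sum to precisely the terms $m = 1$ and $m = n$. The Koszul signs in the symmetrized products demand some care in general, but they are harmless for the two surviving summands, since $F_{n,n}$ is the identity and $F_{n,1}$ enters with coefficient one.
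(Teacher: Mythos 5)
Your proof is correct and takes essentially the same approach as the paper: you determine from the vanishing hypotheses that $F$ restricts to the identity on $\bigoplus_{k<n} S^k(sW)$ and to $(\id, F_{n,1}) : S^n(sW) \to S^n(sW) \oplus sW$ on $S^n(sW)$, and then project the chain-map identity $d' \circ F = F \circ d$ onto $sW$. The paper's proof is just a terser version of this, leaving the coalgebra-morphism component bookkeeping (which you spell out via the composition formula) implicit.
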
 

\begin{proof}
By construction, $F$ restricts to the identity on $\bigoplus_{k <n} S^k (sW)$ and the restriction of $F$ to $S^n (sW)$ is:
\[
S^n (sW) \stackrel{(\id, F_{n,1})}{\longrightarrow} 
S^n (sW) \oplus sW \subset S^* (sW). 
\] 
The result then follows from the hypothesis that $F$ is a chain map, the first equality being an immediate consequence of the equality $d_{1,1} = d'_{1,1}$. 
\end{proof}

\subsection{From $3$-stubs to $L_\infty$-structures}

The purpose of this Section is two-fold: to introduce the notion of a $3$-stub of an $L_\infty$-structure (see Definition \ref{defn:3_stub}) and then give a criterion that allows a $3$-stub to be extended to a full $L_\infty$-structure (see Proposition \ref{prop:extend}) using basic obstruction theory. For this we work in the following connected context:

\begin{hyp}
\label{hyp:connected}
Suppose that $W$ is $\nat$-graded, so that $sW$ is concentrated in positive degrees. 
\end{hyp}

\begin{rem}
\ 
\begin{enumerate}
\item 
$S^*(-)$ is an exponential functor (cf. Example \ref{exam:S_exp_Lie}) hence one identifies $S^* (sW)$ in (homological) degree $3$ as  
$
S^3 (sW_0)\oplus (sW_1 \otimes sW_0) \oplus sW_2.
$ 
\item 
$S^n (sW)$ is zero in degrees $< n$ and coincides with $S^n (sW_0)$ in degree $n$. 
\end{enumerate}
\end{rem}

The following terminology is adopted here:

\begin{defn}
\label{defn:3_stub}
A $3$-stub is the data for an $L_\infty$-structure on $sW$ up to degree three. Namely 
\begin{enumerate}
\item 
a sequence  $sW_2 \rightarrow sW_1 \rightarrow sW_0$, with $sW_i$ in degree $i+1$; 
\item 
structure morphisms $S^2 (sW_0) \rightarrow sW_0$, $S^3 (sW_0) \rightarrow sW_1$ and $sW_1 \otimes sW_0 \rightarrow sW_1$ of degree $-1$
\end{enumerate}
such that the condition $d^2=0$ is satisfied in degrees $\leq 3$.
\end{defn}

The structure morphisms of an $L_\infty$-structure 
$(S^*(sW),d)$ up to (homological) degree $3$ are indicated in the following diagram:
\[
\xymatrix{
&
&
&
S^3(sW_0)
\ar[ld]
\ar[ldd]|(.3){\gamma}
&
\\
&
&
S^2(sW_0)
\ar[ld]|{\beta}
&
sW_1 \otimes sW_0
\ar[ld]|{\delta}
\ar[l]|{\ }
&
\\ 
&
sW_0
&
sW_1 
\ar[l]|{\alpha}
&
sW_2 
\ar[l]
&
}
\]
Here:
\begin{enumerate}
\item 
$sW_1 \otimes sW_0 \rightarrow S^2 (sW_0)$ is the composite 
\[
sW_1 \otimes sW_0 \stackrel{\alpha \otimes \id} {\rightarrow} sW_0 \otimes sW_0 \stackrel{\mu}{\rightarrow} S^2 (sW_0);
\]
\item 
$  S^3 (sW_0) \rightarrow S^2 (sW_0)$ is the composite 
\[
S^3 (sW_0) \stackrel{\Delta} {\rightarrow}
S^2 (sW_0) \otimes sW_0
 \stackrel{\beta \otimes \id} {\rightarrow} sW_0 \otimes sW_0 \stackrel{\mu}{\rightarrow} S^2 (sW_0).
\]
\end{enumerate}

\begin{rem}
\label{rem:stub}
\ 
\begin{enumerate}
\item
The condition $d^2=0$ (in degrees $\leq 3$) is equivalent to the following:
\begin{enumerate}
\item 
the composite $sW_2 \rightarrow sW_1 \rightarrow sW_0$ is zero; 
\item 
the sum of the two composite morphisms $sW_1 \otimes sW_0 \rightarrow sW_0$  is zero; 
\item 
the sum of the two composite morphisms $S^3 (sW_0) \rightarrow sW_0$ is zero. 
\end{enumerate}
\item
If $W_2=0$, then the first condition is automatic; in this case the data of a $3$-stub is given by the tuple of morphisms
 $(\alpha : sW_1 \rightarrow sW_0, \ \beta  : S^2 (sW_0) \rightarrow sW_0, \  \gamma : S^3 (sW_0) \rightarrow sW_1, \ \delta : sW_1 \otimes sW_0 \rightarrow sW_1)$ such that the second two conditions are satisfied.
\item 
Given any $3$-stub, replacing $W_2$ by zero gives a $3$-stub.
\end{enumerate} 
\end{rem}

The following is a `truncated version' of a standard result for $L_\infty$-structures:

\begin{lem}
\label{lem:lie}
\ 
\begin{enumerate}
\item 
For $W_0$, $W_1$ $\kring$-vector spaces concentrated in degrees $0$ and $1$ respectively
and the data of a $3$-stub $(\alpha : sW_1 \rightarrow sW_0, \ \beta  : S^2 (sW_0) \rightarrow sW_0, \  \gamma : S^3 (sW_0) \rightarrow sW_1,\ \delta : sW_1 \otimes sW_0 \rightarrow sW_1)$ taking $W_2=0$,  the morphism $\beta : S^2 (sW_0) \rightarrow sW_0$ induces a Lie algebra structure on $H_0 := s^{-1} \coker \big(sW_1 \rightarrow sW_0\big)$. 
\item 
Conversely, given a Lie algebra $H_0$ and an exact sequence of $\kring$-vector spaces $W_1 \rightarrow W_0 \rightarrow H_0 \rightarrow 0$, there exists a $3$-stub structure inducing this with $W_2=0$ and  $\alpha$ the suspension of the given map $ W_1 \rightarrow W_0 $.
\end{enumerate}
\end{lem}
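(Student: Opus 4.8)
The plan is to desuspend everything and reinterpret the data of a $3$-stub (with $W_2 = 0$) as classical algebraic data on $W_0$ and $W_1$. Write $a := s^{-1}\alpha : W_1 \to W_0$ for the desuspension of $\alpha$, so that $H_0 = \coker(a)$ with structure projection $\pi : W_0 \twoheadrightarrow H_0$. Using the natural isomorphisms $S^2(sW_0) \cong s^2 \Lambda^2(W_0)$ and $S^3(sW_0) \cong s^3 \Lambda^3(W_0)$, the degree $-1$ morphisms $\beta$, $\gamma$ and $\delta$ desuspend to maps
\[
b : \Lambda^2(W_0) \to W_0, \quad c : \Lambda^3(W_0) \to W_1, \quad d : W_1 \otimes W_0 \to W_1
\]
of degree zero; in particular $b$ is an antisymmetric bracket on $W_0$. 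Tracing the composites displayed after the structure diagram, the two conditions of Remark \ref{rem:stub} become, up to the Koszul signs I suppress here, the identities
\begin{eqnarray*}
a\bigl(d(w \otimes y)\bigr) &=& -\, b\bigl(a(w) \wedge y\bigr), \\
a\bigl(c(x \wedge y \wedge z)\bigr) &=& -\bigl(b(b(x\wedge y)\wedge z) + b(b(y\wedge z)\wedge x) + b(b(z\wedge x)\wedge y)\bigr),
\end{eqnarray*}
for $w \in W_1$ and $x, y, z \in W_0$; the right-hand side of the second identity is precisely the Jacobiator of $b$.

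For part (1), first note that the first identity shows $b(a(w) \wedge y) \in \mathrm{im}(a)$ for all $w, y$; by antisymmetry of $b$ this means $\mathrm{im}(a) = \ker(\pi)$ is a two-sided ideal for $b$, so $b$ descends to a well-defined antisymmetric bracket $\overline{b}$ on $H_0 = \coker(a)$. The second identity shows that the Jacobiator of $b$ lies in $\mathrm{im}(a) = \ker(\pi)$; applying $\pi$ and using that $\pi$ intertwines $b$ with $\overline{b}$, the Jacobiator of $\overline{b}$ vanishes in $H_0$. Hence $\overline{b}$ is a Lie bracket, which is the asserted structure induced by $\beta$.

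For part (2), I would reverse this construction, using freely that $\kring$ is a field so that every surjection of $\kring$-vector spaces splits linearly. Fix a linear section $\sigma : H_0 \to W_0$ of $\pi$ and a linear section $\tau : \mathrm{im}(a) \to W_1$ of the surjection $a : W_1 \twoheadrightarrow \mathrm{im}(a)$. Define $b := \sigma \circ [-,-]_{H_0} \circ (\pi \wedge \pi)$, an antisymmetric bracket on $W_0$ lifting the Lie bracket of $H_0$ through $\pi$, and let $\beta$ be its suspension; take $\alpha$ to be the suspension of the given map $a$. Because $\pi \circ a = 0$ and $\pi$ is now a bracket morphism, for any $w \in W_1$, $y \in W_0$ the element $b(a(w)\wedge y)$ lies in $\ker(\pi) = \mathrm{im}(a)$; setting $d(w \otimes y) := -\tau\bigl(b(a(w)\wedge y)\bigr)$ defines a bilinear map whose suspension $\delta$ satisfies condition (b) by construction. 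Likewise, since $\pi$ is a bracket morphism and $H_0$ satisfies the Jacobi identity, the Jacobiator of $b$ lands in $\ker(\pi) = \mathrm{im}(a)$ and is alternating in $x, y, z$; applying $-\tau$ yields an alternating map $c : \Lambda^3(W_0) \to W_1$ whose suspension $\gamma$ satisfies condition (c). This produces the required $3$-stub with $W_2 = 0$.

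The routine but load-bearing step, which I have deliberately suppressed above, is the Koszul sign bookkeeping needed to verify that the second composite $\beta \circ \mu \circ (\beta \otimes \id) \circ \Delta$ of Remark \ref{rem:stub} desuspends exactly to the signed cyclic sum of double brackets: one must check that the coproduct $\Delta : S^3(sW_0) \to S^2(sW_0) \otimes sW_0$ produces the three cyclic splittings with the correct signs arising from permuting the odd generators $sx, sy, sz$, so that the identity genuinely encodes the Jacobi identity rather than some other relation. Once these signs are pinned down, both parts follow from the two displayed identities together with the splitting of surjections over a field.
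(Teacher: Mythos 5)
Your proposal is correct; note that the paper itself gives no proof of this lemma (it is presented as a ``truncated version'' of a standard $L_\infty$ fact), so your argument fills in a verification rather than paralleling one. Your route --- desuspend via $S^n(sW_0)\cong s^n\Lambda^n(W_0)$, rewrite conditions (b) and (c) of Remark \ref{rem:stub} as the statements that $b(a(w)\wedge y)$ and the Jacobiator of $b$ lie in $\mathrm{im}(a)=\ker(\pi)$, then quotient for part (1) and lift through linear sections $\sigma,\tau$ for part (2) --- is the natural direct argument, and every step goes through. The one load-bearing point you deliberately suppressed does check out, and it is worth recording why: the relevant component $\Delta_{2,1}\colon S^3(sW_0)\rightarrow S^2(sW_0)\otimes sW_0$ of the coproduct satisfies
\[
\Delta_{2,1}\big((sx)(sy)(sz)\big)
=
(sx)(sy)\otimes sz \ +\ (sy)(sz)\otimes sx \ +\ (sz)(sx)\otimes sy,
\]
with all three cyclic terms carrying the \emph{same} sign, because a $3$-cycle permuting three degree-one elements is an even permutation, hence has Koszul sign $+1$. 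Consequently $\beta\circ\mu\circ(\beta\otimes\id)\circ\Delta$ desuspends to the uniform-sign cyclic sum $b(b(x\wedge y)\wedge z)+b(b(y\wedge z)\wedge x)+b(b(z\wedge x)\wedge y)$, i.e., genuinely the Jacobiator of $b$ and not a signed variant (which would not yield the Jacobi identity on the cokernel in part (1), nor be killed by the Jacobi identity of $H_0$ in part (2)). With that computation made explicit, and the observation that in part (2) the induced bracket on $\coker(a)$ is the original one since $\pi\circ b=[-,-]_{H_0}\circ(\pi\wedge\pi)$ by construction, both directions of your proof are complete.
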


The following Proposition  is a variant, starting from a given $3$-stub, of  transfer of structure results (cf. the homotopy transfer theorem \cite[Section 10.3]{LV}). It is proved by basic obstruction theory.

\begin{prop}
\label{prop:extend}
Let $W$ be a $\nat$-graded $\kring$-vector space equipped with the following structure:
\begin{enumerate}
\item 
a chain complex  $(sW,d)$ such that $H_*(sW,d)=0$ for $* \neq 1$; 
\item 
morphisms  $ \beta : S^2 (sW_0) \rightarrow sW_0$, $\gamma : S^3 (sW_0) \rightarrow sW_1$, $ \delta : sW_1 \otimes sW_0 \rightarrow sW_1$ defining a $3$-stub.
\end{enumerate}
Then this structure extends to an $L_\infty$-structure $(S^* (sW),d)$. 
\end{prop}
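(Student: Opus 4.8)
The plan is to extend the $3$-stub to a full $L_\infty$-structure by obstruction theory, constructing the coderivation $d$ one symmetric-power component at a time and showing that the only obstruction to each extension vanishes because of the hypothesis $H_*(sW,d)=0$ for $*\neq 1$. Concretely, since a coderivation $d$ on $S^*(sW)$ is determined by its corestriction $\dbar : S^*(sW) \to sW$ (Remark \ref{rem:coderiv_extn}), I would build $\dbar$ by prescribing its components $d_{n,1} : S^n(sW) \to sW$ for increasing $n$. The $3$-stub supplies $d_{1,1}$ (the complex differential), $\beta$ (which is $d_{2,1}$ in degree $0$), $\gamma$ and $\delta$ (the degree-$3$ components $d_{3,1}$ and $d_{2,1}$ respectively), and the $d^2=0$ condition is guaranteed through homological degree $3$. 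The task is to define $d_{n,1}$ in every remaining bidegree so that $d^2=0$ holds throughout, equivalently (by Lemma \ref{lem:diff_coderiv}) so that $\dbar \circ d = 0$.

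First I would set up the induction on homological degree. Fix a homological degree $m \geq 4$ and assume $d$ has been defined and satisfies $d^2=0$ in all homological degrees $<m$; I then seek to define the as-yet-unspecified components of $d$ landing in degree-$m$ elements of $sW$ and the new components originating from degree-$m$ elements of $S^*(sW)$, so as to kill $\dbar\circ d$ in degree $m$. The key computation is that the composite $\dbar \circ d$, restricted to a fixed $S^n(sW)$ in the relevant degree, decomposes (via the coproduct $\Delta$ and the product $\mu$) into a sum of the unknown top term $d_{1,1}\circ d_{n,1}$ plus a quantity $\mathcal{O}_n$ built entirely out of the already-constructed lower components. The relation $d^2=0$ in lower degrees, together with the coassociativity used to extend coderivations, forces $\mathcal{O}_n$ to be a $d_{1,1}$-cycle in $sW$. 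Since $H_*(sW,d_{1,1})=0$ for $*\neq 1$ and the obstruction sits in homological degree $\geq 3$ inside $sW$ (i.e., in $W_{\geq 2}$, strictly above the degree of $H_1$), the cycle $\mathcal{O}_n$ is a boundary: there exists a preimage under $d_{1,1}$, and choosing $d_{n,1}$ to be minus such a preimage makes the degree-$m$ component of $\dbar\circ d$ vanish. Doing this for every $n$ in degree $m$ completes the inductive step.

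The main obstacle, and where I would spend the most care, is verifying that the accumulated obstruction $\mathcal{O}_n$ is genuinely a cycle for the complex differential $d_{1,1}$ and that it lands in a homological degree where $H_*(sW,d)$ vanishes. The cycle property is the standard Maurer–Cartan / Jacobi-style identity: one applies $d$ once more and uses the already-established lower-degree vanishing $d^2=0$ together with Lemma \ref{lem:d2_nl}, which packages the required quadratic relations among the $d_{k,l}$. The degree bookkeeping is what makes the connectedness Hypothesis \ref{hyp:connected} essential: because $sW$ is concentrated in positive degrees and the obstruction to defining $d_{n,1}$ sits strictly above degree $1$, it never meets the sole nonvanishing homology group $H_1(sW,d)$, so no genuine obstruction survives. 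The freedom in choosing the preimage (differing by a $d_{1,1}$-cycle) reflects the non-uniqueness of the extension, consistent with the isomorphism-transport Lemmas \ref{lem:transfer_L_infty} and \ref{lem:basic_isos_L_infty}; I would note that any two choices yield $L_\infty$-isomorphic structures, but the existence statement only requires exhibiting one coherent choice, which the above construction provides.
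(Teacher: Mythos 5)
Your proposal is correct and follows essentially the same route as the paper's proof: an induction over the components $d_{n,1}$ in which the partial composite $\dbar\circ d$ built from the already-constructed components (your $\mathcal{O}_n$, the paper's composite (\ref{eqn:comp_k>1})) is shown to be a $d_{1,1}$-cycle via the lower-degree relations of Lemma \ref{lem:d2_nl}, and then killed using the acyclicity hypothesis $H_*(sW,d)=0$ for $*\neq 1$ together with the degree bound coming from working in homological degree $>3$. The only differences are organizational (the paper inducts on the polynomial degree $n$ as well as the homological degree) and a harmless off-by-one in your degree bookkeeping (the cycle $\mathcal{O}_n$ lands in degree $m-2\geq 2$, not $\geq 3$), which does not affect the argument since this still avoids the degree of $H_1$.
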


\begin{proof}
The construction of the $L_\infty$-structure is by increasing induction upon the (polynomial) degree $n \geq 1$ in the symmetric algebra and on the homological degree. The initial cases are given by  $n=1$, which  is given as the chain complex $(sW, d)$ and by homological degree $\leq 3$, which is given  by the $3$-stub.

For the inductive step, we may assume that $n>1$ and that the homological degree is $>3$ and that 
 the result has been proven for $n'<n$ in all homological degrees and for the given $n$ in smaller homological degrees.
 
We require to construct $S^n (sW) \rightarrow sW$ in the given homological degree (here the homological degree is not indicated in the notation), 
 so that the condition of Lemma \ref{lem:diff_coderiv} is satisfied, i.e., 
 \[
S^n (sW) 
\stackrel{d}{\rightarrow}
\bigoplus_{1 \leq k \leq n}
 S^k (sW)
\stackrel{\bigoplus  d_{k,1} }{\rightarrow}
sW
\]
is zero.

Consider the morphism 
\begin{eqnarray}
\label{eqn:comp_k>1}
S^n (sW) 
\stackrel{d}{\rightarrow}
\bigoplus_{1 < k \leq n}
S^k (sW)
\stackrel{\bigoplus  d_{k,1} }{\rightarrow}
sW
\end{eqnarray}
given by omitting the (unknown) component $k=1$. 

We claim that the inductive hypotheses imply that the composite of (\ref{eqn:comp_k>1}) with the differential $d : sW \rightarrow sW$ is zero. To see this, for each $k$ ($1< k \leq n$) the inductive hypothesis (using the homological degree in the case $k=n$) ensures  that the composite $S^k (sW) \stackrel{\bigoplus  d_{k,1} }{\rightarrow} sW \stackrel{d}{\rightarrow} sW$ is equal to the negative of 
\[
S^k (sW) 
\stackrel{\bigoplus d_{k,l}}{\rightarrow}
\bigoplus_{1 < l \leq k}
S^l (sW)
\stackrel{\bigoplus s d_{l,1} }{\rightarrow}
sW.
\]

In addition, the inductive hypothesis implies (as for Lemma \ref{lem:d2_nl}) that, for each $l>1$, the composite
\[
S^n (sW) 
\stackrel{d}{\rightarrow}
\bigoplus_{1 < k \leq n}
S^k (sW) 
\stackrel{\bigoplus d_{k,l}}{\rightarrow}
S^l (sW)
\]
is zero. Putting these facts together establishes the claim. 

Since the composite (\ref{eqn:comp_k>1}) maps to cycles of $(sW,d)$ and the homology of this complex is concentrated in degree $1$, the composite maps to boundaries (using the hypothesis that the homological degree in $S^n (sW)$ is greater than $3$). Thus there exists a linear map 
\[
S^n (sW) \rightarrow sW
\]
such that the composite with $d: sW \rightarrow sW$ gives the negative of (\ref{eqn:comp_k>1}). This completes the construction of the required map and hence the proof of the inductive step.
\end{proof}

The $L_\infty$-structure constructed in Proposition \ref{prop:extend} is unique up to $L_\infty$-isomorphism. This is a consequence of the following result, again proved by basic obstruction theory:

\begin{prop}
\label{prop:unique}
Let $W$ be a $\nat$-graded $\kring$-vector space equipped with $L_\infty$-structures $(S^* (sW),d)$ and $(S^*(sW),d')$ such that the following hold:
\begin{enumerate}
\item 
$d_{1,1} = d'_{1,1}$;
\item 
the restrictions of $d_{2,1}$ and $d'_{2,1}$ to $S^2(sW_0) \subset S^2 (sW)$  coincide; 
\item 
 $H_*(sW,d_{1,1})=0$ for $* \neq 1$. 
\end{enumerate}
Then $(S^* (sW),d)$ and $(S^*(sW),d')$ are $L_\infty$-isomorphic by an isomorphism which restricts to the identity on $sW$ and on $S^2(sW_0)$.
\end{prop}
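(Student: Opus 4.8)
The plan is to construct the desired $L_\infty$-isomorphism $F$ as a composite of ``basic'' isomorphisms of the type treated in Lemma \ref{lem:basic_isos_L_infty}, proceeding by increasing induction on the polynomial degree $n \geq 2$. Concretely, I would build a sequence of $L_\infty$-structures $d = d^{(1)}, d^{(2)}, d^{(3)}, \ldots$ together with basic isomorphisms $F^{(n)} : (S^*(sW), d^{(n-1)}) \to (S^*(sW), d^{(n)})$ satisfying $F^{(n)}_{1,1} = \id_{sW}$ and $F^{(n)}_{i,1} = 0$ for $i \neq 1, n$, arranged so that $d^{(n)}_{i,1} = d'_{i,1}$ for all $i \leq n$. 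Since $W$ is $\nat$-graded (Hypothesis \ref{hyp:connected}), $S^n(sW)$ vanishes in homological degrees $< n$, so in each fixed homological degree only finitely many of the $F^{(n)}$ are nontrivial; the composite $F := \cdots \circ F^{(3)} \circ F^{(2)}$ is therefore well defined degreewise and, by Lemma \ref{lem:transfer_L_infty}, yields an $L_\infty$-isomorphism from $(S^*(sW), d)$ to $(S^*(sW), d')$.

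For the inductive step, suppose $d^{(n-1)}$ has been constructed with $d^{(n-1)}_{i,1} = d'_{i,1}$ for $i < n$. By Lemma \ref{lem:basic_isos_L_infty}, applying a basic isomorphism $F^{(n)}$ leaves the components $d_{i,1}$ with $i < n$ unchanged and replaces $d^{(n-1)}_{n,1}$ by $d^{(n-1)}_{n,1} - D_n\big(F^{(n)}_{n,1}\big)$, where $D_n$ denotes the differential on the graded vector space $\hom_\kring(S^n(sW), sW)$ given, up to sign, by $D_n(\phi) = d_{1,1} \circ \phi - \phi \circ d_{n,n}$; here $d_{n,n}$ is the coderivation extension of $d_{1,1}$ (Remark \ref{rem:coderiv_extn}). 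Thus matching $d^{(n)}_{n,1}$ with $d'_{n,1}$ amounts to solving the linear equation $D_n\big(F^{(n)}_{n,1}\big) = \delta_n$, where $\delta_n := d^{(n-1)}_{n,1} - d'_{n,1}$.

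First I would verify that $\delta_n$ is a $D_n$-cycle. Since $d_{1,1} = d'_{1,1}$ and, by the inductive hypothesis, $d^{(n-1)}_{i,1} = d'_{i,1}$ for $i < n$, the coderivation property forces $d^{(n-1)}_{n,k} = d'_{n,k}$ for all $k \geq 2$, because the component $d_{n,k}$ is determined by $d_{n-k+1,1}$. Writing out the $(n,1)$-components of $d^2 = 0$ and of $(d')^2 = 0$ (Lemma \ref{lem:d2_nl} with $l=1$) as $\sum_k d_{k,1} d_{n,k} = 0$ and subtracting, all intermediate terms cancel by these matchings, leaving $d_{1,1} \circ \delta_n \pm \delta_n \circ d_{n,n} = 0$, i.e.\ $D_n(\delta_n) = 0$. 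This is the exact analogue of the cycle computation in the proof of Proposition \ref{prop:extend}.

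It then remains to show that the class $[\delta_n]$ vanishes, which is where the hypotheses enter and which I expect to be the main point. Over the field $\kring$ every chain complex splits off its homology, so $H_*\big(\hom_\kring(S^n(sW), sW), D_n\big) \cong \hom_\kring\big(H_*(S^n(sW)), H_*(sW)\big)$; by the graded Künneth theorem together with hypothesis (3), $H_*(S^n(sW)) \cong S^n(H_*(sW))$ is concentrated in homological degree $n$, whereas $H_*(sW)$ is concentrated in degree $1$. As $\delta_n$ has degree $-1$, its class lies in the summand $\hom(H_n(S^n(sW)), H_{n-1}(sW))$, which is nonzero only when $n = 2$; hence for $n > 2$ the obstruction vanishes automatically and the required $F^{(n)}_{n,1}$ exists. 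For $n = 2$ I would invoke hypothesis (2): it gives $\delta_2|_{S^2(sW_0)} = 0$, so $\delta_2$ descends to the quotient complex $Q := S^2(sW)/S^2(sW_0)$, where $S^2(sW_0)$ lies in the lowest homological degree of $S^2(sW)$ and is therefore a subcomplex with trivial differential. Since $Q$ is concentrated in degrees $\geq 3$, one has $H_2(Q) = 0$, so the obstruction in $\hom(H_2(Q), H_1(sW))$ vanishes and $F^{(2)}_{2,1}$ can be chosen to factor through $Q$, i.e.\ to vanish on $S^2(sW_0)$. Because $F^{(n)}_{2,1} = 0$ for all $n \geq 3$, the $(2,1)$-component of the composite $F$ equals $F^{(2)}_{2,1}$, which vanishes on $S^2(sW_0)$; together with $F_{1,1} = \id_{sW}$ this shows that $F$ restricts to the identity on $sW$ and on $S^2(sW_0)$, as required. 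The principal obstacle is precisely this obstruction-class computation — correctly identifying $H_*(S^n(sW))$ and isolating the single degree $n = 2$ in which hypothesis (2) is needed — alongside the coderivation bookkeeping used to establish the cycle condition.
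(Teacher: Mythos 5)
Your proof is correct, and its outer skeleton — composing basic coalgebra automorphisms $F^{(n)}$ with $F^{(n)}_{1,1}=\id_{sW}$ and a single nonzero higher component, inducting on the polynomial degree $n$, with Lemmas \ref{lem:transfer_L_infty} and \ref{lem:basic_isos_L_infty} doing the bookkeeping — is the same as the paper's. Where you genuinely diverge is in how the obstruction is killed at each stage $n$. The paper runs a \emph{double} induction: within a fixed $n$ it takes the lowest homological degree $t$ in which $d_{n,1}\neq d'_{n,1}$ (hypothesis (2) entering only to guarantee $t\geq 3$ when $n=2$), notes that the difference lands in $d_{1,1}$-cycles of degree $t-1\geq 2$, and uses a linear section of the surjection of $d_{1,1}$ onto cycles to build an $f$ supported in that single degree $t$; it then iterates over $t$ and over $n$, using a stability property of the resulting isomorphisms. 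You instead correct all homological degrees at once: you identify the obstruction group as the degree $-1$ part of $H_*\big(\hom_\kring(S^n(sW),sW),D_n\big)\cong \hom_\kring\big(H_*(S^n(sW)),H_*(sW)\big)$ (splitting of complexes over a field), compute $H_*(S^n(sW))\cong S^n(H_*(sW))$ using exactness of $\sym_n$-coinvariants in characteristic zero, and conclude that the group vanishes outright for $n\geq 3$, isolating $n=2$ as the only stage where hypothesis (2) is needed — which you handle by factoring through the quotient $S^2(sW)/S^2(sW_0)$. Your route buys one correction per polynomial degree rather than per homological degree, a precise identification of the obstruction group (making transparent that only $n=2$ could obstruct), and an $F^{(2)}_{2,1}$ that visibly vanishes on $S^2(sW_0)$, yielding the final clause of the statement directly; the paper's route is more elementary (no hom-complex homology or K\"unneth computation, just sections of surjections) and makes completely explicit the degreewise stabilization that both arguments ultimately need in order to assemble a well-defined infinite composite.
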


\begin{proof}
If $d = d'$, then the result is immediate. Otherwise, choose $n$ minimal such that $d_{n,1} \neq d'_{n,1}$; the hypothesis on $d_{1,1}$ ensures that $n \geq 2$. Consider the lowest homological degree $t$ in which $d_{n,1} \neq d'_ {n,1}$; $t \geq 3$, since $S^n (sW)$ is concentrated in homological degree $\geq n$ and, in the case $n=2$, the hypothesis excludes homological degree $2$. 

On homological degree $t$, a standard argument (cf. the proof of Proposition \ref{prop:extend})   shows that the difference 
\[
(d_{n,1} - d'_{n,1}) \ : \  S^n (sW) \rightarrow sW 
\]
maps to $d_{1,1}$-cycles in homological degree $t-1 \geq 2$. 

By the hypothesis on $H_*(sW,d_{1,1})$, the differential $d_{1,1}$ surjects onto the cycles of degree $t-1$; this surjection admits a linear section. Composing with this section, one has a linear map $f : S^n (sW) \rightarrow sW$ of degree zero, non-zero only in homological degree $t$, such that 
\[
d_{n,1} - d'_{n,1} = d_{1,1} \circ f
\]
 on homological degree $t$. 

Let $F$ be the automorphism of the coalgebra $S^* (sW)$ with components $F_{1,1}= \id _{sW}$, $F_{i,1}=0$ for $1< i <n$ and $F_{n,1}=f$. Using  transport of $L_\infty$-structures with respect to $F$ (see Lemma \ref{lem:transfer_L_infty}) and Lemma \ref{lem:basic_isos_L_infty},  one shows that $(S^*(sW),d')$ is isomorphic to an $L_\infty$-structure $(S^* (sW),d'')$ such that $d''_{i,1}= d_{i,1}$ for $i<n$ and, for $i=n$, up to and including homological degree $t$. 
 This isomorphism satisfies the following stability property: it acts as the identity on $\bigoplus _{i <n} S^i (sW)$ and on $S^n (sW)$ in homological degrees $<t$. 

The stability property implies that this construction can be iterated to give the required isomorphism between $(S^* (sW),d)$ and $(S^*(sW),d')$ (cf. the argument used in the proof of \cite[Theorem 7.9]{MR2931635}, for example). Namely, for $n$ as above, one can iterate the isomorphisms obtained as above, with respect to increasing homological degree, to show that $(S^*(sW),d')$ is isomorphic to an $L_\infty$-structure $(S^* (sW),d'')$ such that $d''_{i,1}= d_{i,1}$ for $i\leq n$. Likewise, this argument can be iterated with respect to increasing $n$.
\end{proof}

\begin{rem}
\label{rem:linfty_iso_not_unique}
Proposition \ref{prop:unique} does not assert that there is a  unique such $L_\infty$-isomorphism between $(S^* (sW),d)$ and $(S^*(sW),d')$. Indeed, the map $f$ used in the proof
 is only unique up to a map taking values in $\ker d_{1,1}$. 
\end{rem}

\begin{rem}
When applied to the situation of Proposition \ref{prop:extend}, if $(S^* (sW), d)$ and $(S^*(sW),d')$ have the same underlying $3$-stub, then the isomorphism constructed in Proposition \ref{prop:unique} leaves this invariant. However, Proposition \ref{prop:unique} is more general: it allows the $3$-stub to be varied  with only  $d_{2,1}= d'_{2,1}$ in homological degree two fixed.

One could generalize still further by allowing automorphisms of the coalgebra $S^* (sW)$ acting as the identity on $sW$ but that do not respect $d_{2,1}$ in homological degree two. This is not pursued here, since there is a canonical choice for $d_{2,1}$ in the case of interest.
\end{rem}

\section{The natural $L_\infty$-structure on $(\lieopd (s\g) , \dleib)$ }
\label{sect:main}

The purpose of this Section is to show that there exists a natural $L_\infty$-structure on $(\lieopd (s\g) , \dleib)$ (see Theorem \ref{thm:main}). This involves first constructing an explicit $3$-stub and then extending this to the required natural $L_\infty$-structure.

\subsection{The $3$-stub}

The construction of the $3$-stub below (see Proposition \ref{prop:leib_stub})  uses the following relations and notation
 for  a  Leibniz algebra $\g$. The Leibniz relation can be written in the equivalent forms:
\begin{eqnarray*}
(xy) z &=& (xz) y + x (yz) \\
x (yz) &=& (xy)z - (xz)y. 
\end{eqnarray*}
The second leads to  the standard:

\begin{lem}
\label{lem:right_action}
For $x,y ,z  \in \g$, $x(yz + zy ) =0$.
\end{lem}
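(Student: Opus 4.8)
The plan is to derive the identity directly from the Leibniz relation, using it in the form $x(yz) = (xy)z - (xz)y$ recorded just above the statement. The only computation needed is to expand both $x(yz)$ and $x(zy)$ via this relation and observe that the resulting terms cancel in pairs.

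First I would expand $x(yz) = (xy)z - (xz)y$. Then I would apply the same relation to the second summand $x(zy)$, now with the roles of $y$ and $z$ interchanged, obtaining $x(zy) = (xz)y - (xy)z$. Adding these two expressions, the term $(xy)z$ cancels against the $-(xy)z$ produced by the second expansion, and likewise $-(xz)y$ cancels against $(xz)y$, leaving $x(yz) + x(zy) = 0$. Since the product is bilinear, this is exactly $x(yz+zy) = 0$, which is the claim.

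There is no genuine obstacle here: the statement is an immediate two-line consequence of the Leibniz identity, and the only thing to watch is the bookkeeping of signs, which are forced by the two applications of the relation. The sole conceptual point is that bilinearity of the Leibniz product is invoked to split $x(yz+zy)$ as $x(yz)+x(zy)$ before applying the relation termwise.
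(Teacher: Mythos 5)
Your proof is correct and is exactly the argument the paper intends: the paper states the lemma as an immediate consequence of the Leibniz relation in the form $x(yz) = (xy)z - (xz)y$ without even writing out the two-term cancellation you perform. Nothing to add.
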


\begin{nota}
Write $\cop$ for the sum under cyclic $\zed/3$ permutation so that,  for example, 
$ x(yz) \cop : = x(yz) + y (zx) + z (xy). $ 
\end{nota}

\begin{defn}
\label{defn:lp}
Let $\lp : \g \otimes \g \rightarrow \g$ be the operation defined for $x, y \in \g$  by
$
x \lp y := xy -yx.
$
\end{defn}

\begin{lem}
\label{lem:circ_identities}
For $x, y , z \in \g$, there are equalities:
\begin{eqnarray*}
 x(yz) \cop
&= & (x \lp y )z \cop \\
x (y \lp z) \cop &=& 2 (x \lp y) z \cop \\
\big ( x (y \lp z) + (y \lp z) x \big) \cop &=& 3 (x \lp y) z \cop \\
(x \lp y ) \lp z \cop &=& -  (x\lp y) z \cop .
\end{eqnarray*}
\end{lem}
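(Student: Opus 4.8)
The plan is to deduce all four equalities from the first, which serves as the master identity; the remaining three then follow formally from Lemma~\ref{lem:right_action} together with the elementary observation that a cyclic sum is unchanged under a cyclic relabelling of its template, i.e.\ $f(x,y,z)\cop = f(y,z,x)\cop$ for any trilinear expression $f$ in $x,y,z$.

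First I would record two consequences of Lemma~\ref{lem:right_action}. Since $x(yz+zy)=0$, expanding $\lp$ gives $x(y \lp z) = x(yz) - x(zy) = 2x(yz)$, and symmetrically $z(x \lp y) = 2z(xy)$. I would also note the cyclic-shift identities following from the template invariance above: the templates $z(xy)$ and $x(yz)$ generate the same cyclic sum, so $z(xy)\cop = x(yz)\cop$, and likewise $(y\lp z)x\cop = (x \lp y)z\cop$. These formal facts do all the work once the master identity is in hand.

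For the master identity $x(yz)\cop = (x\lp y)z\cop$, I would expand each of the three summands on the left using the Leibniz relation in the form $x(yz)=(xy)z-(xz)y$, obtaining six signed terms of the shape $(ab)c$, and expand the right-hand side $(x\lp y)z\cop = (xy-yx)z\cop$ directly into six such terms. Comparing the two expansions shows they consist of exactly the same six signed monomials, which gives the equality. This is the only step requiring genuine computation, and hence the main (though still routine) obstacle; everything afterward is bookkeeping with no further appeal to the Leibniz relation.

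The remaining identities are then immediate. The second follows from $x(y\lp z)=2x(yz)$ and the master identity, giving $x(y\lp z)\cop = 2x(yz)\cop = 2(x\lp y)z\cop$. For the third, I would add $(y\lp z)x\cop = (x\lp y)z\cop$ (the cyclic-shift observation) to the second identity, so that $\big(x(y\lp z)+(y\lp z)x\big)\cop = 2(x\lp y)z\cop + (x\lp y)z\cop = 3(x\lp y)z\cop$. Finally, for the fourth I would write $(x\lp y)\lp z = (x\lp y)z - z(x\lp y)$, whence $(x\lp y)\lp z\cop = (x\lp y)z\cop - z(x\lp y)\cop$; since $z(x\lp y)=2z(xy)$ and $z(xy)\cop = x(yz)\cop = (x\lp y)z\cop$, the subtracted term equals $2(x\lp y)z\cop$, leaving $-(x\lp y)z\cop$ as required.
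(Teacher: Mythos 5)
Your proposal is correct and follows essentially the same route as the paper: the first identity is established by direct expansion via the Leibniz relation, the second by combining it with Lemma \ref{lem:right_action}, the third from the second using cyclic invariance, and the fourth by expanding $(x\lp y)\lp z$ and again invoking cyclic invariance. Your write-up merely makes explicit the cyclic-relabelling bookkeeping (e.g.\ $z(xy)\cop = x(yz)\cop$ and $(y\lp z)x\cop = (x\lp y)z\cop$) that the paper's proof leaves implicit.
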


\begin{proof}
Direct calculation using the Leibniz relation gives the first equality. 
The second follows by combining this with  Lemma \ref{lem:right_action}  
and the third then follows by taking into account the cyclic invariance. 

The second equality also implies 
$$
\big((x \lp y ) z - z (x \lp y)\big) \cop = - (x\lp y) z \cop,
$$
which gives the final equality.
\end{proof}

Consider  Pirashvili's subcomplex $(\lieopd (s\g) , \dleib)
\subset 
(T(s \g), \dleib) $ of the Leibniz complex. The underlying graded $\kring$-vector space $\lieopd (s\g)$ is concentrated in positive degrees.

Using the notation of Section \ref{sect:lift}, set $W:= s^{-1} \lieopd (s\g)$ (so that $sW  = \lieopd (s\g)$) and take $(sW,d)$ to be  $(\lieopd (s\g) , \dleib)$.

\begin{prop}
\label{prop:leib_stub}
For $\g$ a Leibniz algebra, the truncation $\lieopd_3 (s\g) \rightarrow \lieopd_2 (s\g) \rightarrow s\g$ of 
 Pirashvili's complex has a natural $3$-stub structure
\[
\xymatrix{
&
&
&
S^3(s\g)
\ar[ld]
\ar[ldd]
&
\\
&
&
S^2(s\g)
\ar[ld]
&
\lieopd_2(s\g) \otimes s\g 
\ar[ld]
\ar[l]|{\ }
&
\\ 
&
s\g
&
\lieopd_2(s\g) 
\ar[l]^{\dleib}
&
\lieopd_3(s\g)
\ar[l]^{\dleib}
&
}
\]
with defining structure maps:
\begin{enumerate}
\item 
$S^2 (s\g)\rightarrow s \g$ the map $(sx) (sy) \mapsto - s(x\lp y)$ for $x, y \in \g$; 
\item 
$\dlie_{2,1} : \lieopd_2(s\g) \otimes s\g \rightarrow \lieopd_2 (s\g) $; 
\item 
$S^3 (s\g)\rightarrow \lieopd_2 (s \g)$  the map $(sx)(sy)(sz) \mapsto - \frac{1}{3} [ s(x\lp y) ,sz] \cop$ for $x,y,z \in \g$.
\end{enumerate}

The induced Lie algebra structure on the homology is the canonical Lie algebra structure on $\glie$ (up to sign).
\end{prop}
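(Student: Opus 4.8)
The plan is to verify that the proposed data satisfies the $3$-stub axioms of Definition \ref{defn:3_stub}, and then to compute the induced Lie structure. Concretely, I must check the two nontrivial conditions from Remark \ref{rem:stub}(1): that the two composites $\lieopd_2(s\g) \otimes s\g \rightarrow s\g$ sum to zero, and that the two composites $S^3(s\g) \rightarrow s\g$ sum to zero (the first condition, on $sW_2 \rightarrow sW_1 \rightarrow sW_0$, is here $\dleib \circ \dleib = 0$, which holds automatically since this is a truncation of a genuine complex). After that, the final statement about the Lie structure follows by identifying the map $\beta : S^2(s\g) \rightarrow s\g$ with the bracket on $\glie$ under the desuspension, invoking Lemma \ref{lem:lie}(1).

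First I would set up the explicit maps. The map $\beta$ sends $(sx)(sy) \mapsto -s(x \lp y)$; note that this is manifestly symmetric in the graded sense (since $sx$ has odd degree, $(sx)(sy) = (sy)(sx)$ in $S^2(s\g)$, and $x \lp y = -(y \lp x)$, so the signs match). The map $\alpha$ is $\dleib : \lieopd_2(s\g) \rightarrow s\g$, which by the computation in the proof of Proposition \ref{prop:Lie_hom_glie_action} sends $s^2\Gamma^2\g \cong \lieopd_2(s\g)$ to $s\g$ via (up to sign) the restriction of the Leibniz product to $\Gamma^2\g$, i.e., $[sx,sy] \mapsto \pm s(xy+yx)$. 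I would pin down the precise signs using Proposition \ref{prop:lie_stable} and Notation \ref{nota:ld}. For the condition on $\lieopd_2(s\g) \otimes s\g \rightarrow s\g$, the two composites are: via $\delta = \dlie_{2,1}$ followed by $\alpha = \dleib$, and via $\lieopd_2(s\g)\otimes s\g \rightarrow S^2(s\g)$ (built from $\alpha \otimes \id$ and $\mu$) followed by $\beta$. That these cancel should be exactly the content of the anti-commutation relation in Remark \ref{rem:dleib_glie-linear}, namely $\dleib \dlie_{2,1}(X \otimes sy) + \dlie_{1,1}((\dleib X) \otimes sy) = 0$, reconciled with the explicit formula for $\dlie_{1,1}$ given after Definition of $\dlie_{n,1}$ as $sx \otimes sy \mapsto -s(xy)$, and matched against $\beta$ applied to the symmetrized term.

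The combinatorial heart, which I expect to be the main obstacle, is the third condition: that the two composites $S^3(s\g) \rightarrow s\g$ sum to zero. One composite factors through $\gamma : S^3(s\g) \rightarrow \lieopd_2(s\g)$ followed by $\alpha = \dleib$; the other factors through $S^3(s\g) \rightarrow S^2(s\g)$ (built from $\beta$) followed by $\beta$ again. Both composites will be cyclic sums of Leibniz-algebra expressions in $x,y,z$, and verifying their cancellation is precisely where the identities of Lemma \ref{lem:circ_identities} are designed to be used. The chosen coefficient $-\frac{1}{3}$ in the definition of $\gamma$, together with the factor appearing in the third identity $\big(x(y\lp z)+(y\lp z)x\big)\cop = 3(x\lp y)z\cop$, strongly suggests that applying $\dleib$ to $-\frac{1}{3}[s(x\lp y),sz]\cop$ produces a cyclic sum that matches, up to sign, the double-$\beta$ composite $(sx)(sy)(sz) \mapsto -s\big((x\lp y)\lp z\big)\cop$, the two being reconciled via the final identity $(x\lp y)\lp z \cop = -(x\lp y)z\cop$. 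I would carry out this computation by expanding $\dleib[s(x\lp y),sz]$ using Proposition \ref{prop:lie_stable}, then summing cyclically and collecting terms through Lemma \ref{lem:circ_identities}, being careful throughout with the Koszul signs introduced by the suspensions.

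Finally, for the assertion about the induced Lie structure, I would invoke Lemma \ref{lem:lie}(1): the cokernel $s^{-1}\coker(sW_1 \rightarrow sW_0) = s^{-1}\coker(\dleib : \lieopd_2(s\g) \rightarrow s\g)$ is identified with $\glie$ by Remark \ref{rem:leibniz_kernel} and Proposition \ref{prop:Lie_hom_glie_action}(1), and the bracket on this cokernel is induced by $\beta$, which sends $(sx)(sy) \mapsto -s(x\lp y) = -s(xy-yx)$. Descending to $\glie$ and desuspending, this is $\overline{x},\overline{y} \mapsto -[\overline{x},\overline{y}]$, i.e., the canonical Lie bracket up to the sign indicated in the statement. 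I would confirm that this $\beta$ indeed lands in the image of $\dleib$ after the antisymmetrization forced by the symmetric power, so that the induced operation on the quotient is well-defined, which again is guaranteed by the $3$-stub axioms just verified.
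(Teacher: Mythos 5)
Your proposal is correct and takes essentially the same approach as the paper: a direct check of the two nontrivial stub identities --- the $\lieopd_2(s\g)\otimes s\g$ condition (your use of the anti-commutation relation of Remark \ref{rem:dleib_glie-linear} is a harmless shortcut for the paper's direct expansion, with Lemma \ref{lem:right_action} handling the $\beta$-side) and the $S^3(s\g)$ condition via the cyclic identities of Lemma \ref{lem:circ_identities}, where the coefficient $-\frac{1}{3}$ plays exactly the role you predict --- followed by reading off the induced bracket on $s^{-1}\coker(\dleib)\cong\glie$ as $-[\overline{x},\overline{y}]$. Two provisional signs in your sketch do need correcting when you carry out the computation: in $S^2(s\g)$ one has $(sx)(sy)=-(sy)(sx)$ (it is this antisymmetry, matching that of $\lp$, which makes $\beta$ well defined), and the double-$\beta$ composite equals $+s\big((x\lp y)\lp z\big)\cop = -s\big((x\lp y)z\big)\cop$, which then cancels against the $\dleib\circ\gamma$ composite $+s\big((x\lp y)z\big)\cop$.
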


\begin{proof}
It is clear that the structure morphisms are natural with respect to the Leibniz algebra $\g$. The relations are checked by direct calculation as follows. 

Consider an element $[sx , sy ] \otimes sz \in \lieopd_2(s\g) \otimes s\g $. Applying $\dlie_{2,1}$ gives 
$[s(xz),sy] + [sx, s(yz)] \in \lieopd_2(s\g)$ which has image $- s \big(  (xz)y + y (xz) + x(yz) + (yz) x  \big)$ under the Leibniz differential. Using the Leibniz relations $y(xz) = (yx) z - (yz) x$ and $x( yz) = (xy)z - (xz)y$, this is equal to 
$ -s \big((xy + yx)z \big)$. 

 The differential  $ \lieopd_2(s\g) \otimes s\g \rightarrow S^2 (s \g)$ is induced by the Leibniz differential $\lieopd_2 (s\g) \rightarrow s\g$, and the image of $  [sx , sy ] \otimes sz$ is $- (s(xy+yx))(sz) \in S^2 (s\g)$. Applying the differential $S^2 (s\g)\rightarrow s\g$ gives $s \big( (xy+yx) z - z (xy+yx) \big) = s \big((xy+ yx)z\big)$, since right multiplication by $(xy+yx)$ is zero, by Lemma \ref{lem:right_action}. It follows that the sum of the two maps $ \lieopd_2(s\g) \otimes s\g \rightarrow s \g$ is zero, as required. 
 
Now consider an element $(sx)(sy)(sz)$ of $S^3(s\g)$. The image of this element under $S^3(s\g)\rightarrow S^2(s\g)$ is 
$- (s(x\lp y))(sz) \cop \in S^2 (s\g)$. Under $S^2 (s\g) \rightarrow s\g$, this has image $s \big( (x\lp y ) \lp z \big)\cop$. By Lemma \ref{lem:circ_identities}, this is equal to $- s\big( (x\lp y)z\big)\cop$.

The image of  $(sx)(sy)(sz)$ under $S^3(s\g) \rightarrow \lieopd_2 (s\g)$ is $-\frac{1}{3} [s (x\lp y), sz ] \cop$. Applying the Leibniz differential $\lieopd_2 (s\g) \rightarrow s\g$ then gives $\frac{1}{3} s \big( (x\lp y)z + z (x\lp y) \big) \cop$, which is equal to $s \big( (x \lp y)z\big)\cop$ by Lemma  \ref{lem:circ_identities}. Thus, the sum of the two maps $S^3 (s\g) \rightarrow s\g$ is zero, as required.
 \end{proof}
 
\begin{rem}
The choice of sign for the map $S^2 (s \g) \rightarrow s\g$ was dictated by the differential of the Leibniz complex. Namely, this arises as the restriction of $\dleib : s\g \otimes s\g \rightarrow s\g$ to the symmetric invariants, since $\dleib (sx \otimes sy) = -s (xy)$.
\end{rem}

\begin{cor}
\label{cor:l_infty_free}
For $\g = \leibopd (V)$ a free Leibniz algebra, there is an $L_\infty$-structure on $(s^{-1} \lieopd (s\leibopd (V)), d)$ such that the restriction of $d_{2,1}$ to $S^2 (s \leibopd (V))$ is the map $S^2 (s \leibopd (V)) \rightarrow s \leibopd (V)$ given by $(sx)(sy) \mapsto -s (x \star y)$. 
 This structure is unique up to $L_\infty$-isomorphism.
\end{cor}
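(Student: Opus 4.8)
The plan is to obtain this Corollary as a direct application of the obstruction-theoretic machinery of Section \ref{sect:lift}, using Theorem \ref{thm:liezation} to supply the homological hypothesis that is special to the free case. Set $\g = \leibopd(V)$ and, following the conventions preceding Proposition \ref{prop:leib_stub}, take $W := s^{-1}\lieopd(s\g)$, so that $(sW,d) = (\lieopd(s\g), \dleib)$. Since $\lieopd(s\g)$ is concentrated in positive degrees, $W$ is $\nat$-graded and Hypothesis \ref{hyp:connected} is met. Here $sW_0 = (\lieopd(s\g))_1 = \lieopd_1(s\g) = s\g = s\leibopd(V)$, so that $S^2(sW_0) = S^2(s\leibopd(V))$, matching the statement.

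For existence, I would verify the two hypotheses of Proposition \ref{prop:extend}. The homological hypothesis $H_*(sW,d) = 0$ for $* \neq 1$ is precisely the conclusion of Theorem \ref{thm:liezation} for the free Leibniz algebra $\leibopd(V)$; this is the one place where freeness enters, and it is the real content of the argument. The $3$-stub hypothesis is supplied verbatim by Proposition \ref{prop:leib_stub}, whose structure map $\beta : S^2(s\g) \rightarrow s\g$ is exactly $(sx)(sy) \mapsto -s(x \lp y)$. Applying Proposition \ref{prop:extend} then extends this data to an $L_\infty$-structure $(S^*(sW),d)$, and by construction the restriction of $d_{2,1}$ to $S^2(sW_0) = S^2(s\leibopd(V))$ coincides with $\beta$, giving the asserted formula.

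For uniqueness, suppose $(S^*(sW),d)$ and $(S^*(sW),d')$ are two such $L_\infty$-structures, each with the prescribed restriction of its quadratic component. I would check the three hypotheses of Proposition \ref{prop:unique}: both structures have underlying complex $(\lieopd(s\g), \dleib)$, so $d_{1,1} = d'_{1,1}$; the restrictions of $d_{2,1}$ and $d'_{2,1}$ to $S^2(sW_0)$ agree, being the stated map in both cases; and $H_*(sW, d_{1,1}) = 0$ for $* \neq 1$ again by Theorem \ref{thm:liezation}. Proposition \ref{prop:unique} then provides an $L_\infty$-isomorphism between them, restricting to the identity on $sW$ and on $S^2(sW_0)$.

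I do not expect a genuine obstacle once the two Propositions are in place: the Corollary is then a matter of matching hypotheses. The only points demanding care are the bookkeeping identification $sW_0 = s\leibopd(V)$ and the observation that the vanishing supplied by Theorem \ref{thm:liezation} is exactly the homology concentration required by both Proposition \ref{prop:extend} and Proposition \ref{prop:unique}. This is also precisely why the statement is restricted to free Leibniz algebras: for a general $\g$ the homology of $(\lieopd(s\g), \dleib)$ need not be concentrated in degree one, so this shortcut is unavailable and the general result of Theorem \ref{thm:main} must be argued differently.
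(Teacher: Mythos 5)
Your proposal is correct and follows exactly the paper's own proof: existence via Proposition \ref{prop:extend} with the $3$-stub of Proposition \ref{prop:leib_stub} and the homological input of Theorem \ref{thm:liezation}, and unicity via Proposition \ref{prop:unique}. The additional hypothesis-checking you spell out (the identification $sW_0 = s\leibopd(V)$, Hypothesis \ref{hyp:connected}, and the role of freeness) is exactly the implicit content of the paper's two-sentence proof.
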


\begin{proof}
Existence follows from Proposition \ref{prop:extend}, using Proposition \ref{prop:leib_stub} to provide the $3$-stub and Theorem \ref{thm:liezation} for the homological input. The unicity follows from Proposition \ref{prop:unique}.
\end{proof}

This is refined below to obtain naturality with respect to $V$ and hence the extension to all Leibniz algebras.

\subsection{The natural $L_\infty$-structure}

The proof of the following Theorem relies heavily on the  naturality with respect to the Leibniz algebra $\g$, which allows reduction to the case of free Leibniz algebras. Appendix \ref{sect:naturality} provides the key ingredient underlying  the argument, based upon viewing operads as natural operations on the associated category of algebras.

\begin{thm}
\label{thm:main}
For $\g$ a Leibniz algebra, there is a natural $L_\infty$-structure on the desuspension of the Pirashvili complex $s^{-1}(\lieopd (s\g) , \dleib)$ such that the restriction of $d_{2,1}$ to $S^2 (s \g)\subset  S^2 (\lieopd (s\g))$ is the map $S^2 (s \g) \rightarrow s \g$ given by $(sx)(sy) \mapsto -s (x \star y)$. In particular, this induces the canonical Lie algebra structure on $\glie$ in homology (up to sign).

This natural structure is unique up to natural $L_\infty$-isomorphism.
\end{thm}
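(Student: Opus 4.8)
The plan is to build the $L_\infty$-structure one structure map at a time as a \emph{natural transformation} of functors on $\leibalg$, so that the $L_\infty$-relations — being identities between natural transformations — need only be checked on free Leibniz algebras, where Corollary \ref{cor:l_infty_free} (via Theorem \ref{thm:liezation}) supplies the required acyclicity. The starting data is the natural $3$-stub of Proposition \ref{prop:leib_stub}, whose $d_{2,1}$ already restricts on $S^2(s\g)$ to $(sx)(sy)\mapsto -s(x\lp y)$ and whose induced Lie structure on $H_0$ is that of $\glie$. Note that one cannot simply apply Proposition \ref{prop:extend} to a general $\g$, since its hypothesis $H_*(sW,d)=0$ for $*\neq 1$ is exactly Theorem \ref{thm:liezation}, which fails for non-free $\g$; naturality is precisely what bridges this gap.

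First I would reorganise the obstruction theory of Proposition \ref{prop:extend} so that it takes place in the category of natural transformations. Concretely, for each polynomial degree $n>1$ and each homological degree, the structure map $d_{n,1}$ is sought as an element of $\nt(S^n \lieopd(s-), \lieopd(s-))$, the natural transformations of functors $\leibalg \to \vect$. Proceeding by increasing induction on $n$ and on homological degree exactly as in Proposition \ref{prop:extend}, the obstruction at each stage is a natural transformation $o_n$ whose value on every $\g$ is a $\dleib$-cycle in homological degree $\geq 2$; the task is to produce a \emph{natural} lift $f_n$ with $\dleib\circ f_n = -o_n$. This is where Appendix \ref{sect:naturality} enters: viewing the relevant functors as built from the free Leibniz algebra functor, it identifies the homology of the natural-transformation complex $\big(\nt(S^n\lieopd(s-),\lieopd(s-)),\ \dleib_*\big)$ with the homology computed on free algebras, so that the acyclicity above degree one provided by Theorem \ref{thm:liezation} furnishes the natural lift $f_n$. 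I expect this — making the obstruction-theoretic lifts natural — to be the main obstacle, since it is the only step where the pointwise argument of Proposition \ref{prop:extend} does not transfer verbatim.

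Having constructed all $d_{n,1}$ as natural transformations, I would verify the relation $d^2=0$ by the principle that two natural transformations $S^n\lieopd(s-)\Rightarrow \lieopd(s-)$ agreeing on all free Leibniz algebras agree on every $\g$: since $\g$ is a quotient of a free Leibniz algebra and the functor $S^n\lieopd(s-)$ preserves surjections, equality can be checked after pulling back to a free algebra, where it holds by Corollary \ref{cor:l_infty_free}. Evaluating the resulting natural collection $(d_{n,1})_n$ on an arbitrary Leibniz algebra $\g$ then yields an $L_\infty$-structure on $s^{-1}(\lieopd(s\g),\dleib)$; by construction $d_{2,1}$ restricts on $S^2(s\g)$ as stated, and Proposition \ref{prop:leib_stub} identifies the induced Lie structure on homology with that of $\glie$ up to sign.

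For uniqueness up to natural $L_\infty$-isomorphism, I would run the argument of Proposition \ref{prop:unique} in the same natural framework. Given two natural structures $d,d'$ agreeing in the prescribed restriction $d_{2,1}|_{S^2(s\g)}$, the comparison isomorphism is assembled from natural lifts $f$ solving $d_{n,1}-d'_{n,1}=d_{1,1}\circ f$; as above, the existence of these lifts as natural transformations follows from Appendix \ref{sect:naturality} together with Theorem \ref{thm:liezation}, and the iterated transport of structure (Lemmas \ref{lem:transfer_L_infty} and \ref{lem:basic_isos_L_infty}) then produces a natural $L_\infty$-isomorphism, unique on the relevant data up to the indeterminacy recorded in Remark \ref{rem:linfty_iso_not_unique}.
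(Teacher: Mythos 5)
Your proposal follows the same overall strategy as the paper: naturalize the obstruction theory of Propositions \ref{prop:extend} and \ref{prop:unique}, starting from the $3$-stub of Proposition \ref{prop:leib_stub}, and use Theorem \ref{thm:liezation} to kill obstructions. However, the two places where you defer to Appendix \ref{sect:naturality} are exactly where the paper's real work lies, and as written both are gaps. First, you seek the structure maps directly in $\nt(S^n\lieopd(s-),\lieopd(s-))$ over all of $\leibalg$, but a natural lift constructed on free algebras does not automatically extend to arbitrary Leibniz algebras: your density argument (every $\g$ is a quotient of a free algebra and $S^n\lieopd(s-)$ preserves surjections) shows an extension is \emph{unique} if it exists, not that it exists, since one must know the candidate value descends along $S^n\lieopd(sF)\twoheadrightarrow S^n\lieopd(s\g)$. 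The paper resolves this by working first over the essentially small category $\leibalgf$, identifying such natural transformations with PROP elements (Proposition \ref{prop:refined_prop_operations}), which then act on \emph{every} algebra naturally (Corollary \ref{cor:large_nat}). Granting that, your check of $d^2=0$ by evaluation on free algebras is a clean substitute for the paper's argument via compatibility with composition (Remark \ref{rem:naturality_large_nat}).

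Second, and more seriously, the claim that Appendix \ref{sect:naturality} ``identifies the homology of the natural-transformation complex with the homology computed on free algebras'' is precisely Corollary \ref{cor:nat_acyclic}, and it does not follow from the naturality identifications alone. The functor $\nt(S^*(\lieopd(s-)),-)$ is only left exact, so acyclicity of the Pirashvili complex of a free Leibniz algebra (Theorem \ref{thm:liezation}) does not by itself yield acyclicity of the complex of natural transformations --- which is what the vanishing of your obstruction groups requires. The missing ingredient is that this acyclic complex of analytic functors in $V$ splits \emph{naturally}, which holds because homogeneous polynomial functors over a field of characteristic zero form a semisimple category (Corollary \ref{cor:semisimple}); only after splitting does applying the hom-functor preserve exactness. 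You correctly flag this step as the main obstacle, but the proposal must supply (or invoke) this splitting argument; without it, the central lifting step of both the existence and the uniqueness proofs is unjustified. Both gaps are repairable with the paper's own toolkit, after which your plan coincides with the paper's proof.
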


Recall that $\vectf \subset \vect$ denotes the full subcategory of finite-dimensional vector spaces. As in Appendix \ref{sect:naturality}, $\leibalgf \subset \leibalg$ denotes the full subcategory of free Leibniz algebras $\leibopd (V)$ where $V \in \ob \vectf$, so that $\leibalgf$ is essentially small.

\begin{nota}
Natural transformations between functors from $\vectf$ to $\vect$ are denoted $\nt_{V \in \vectf}$ and, for functors from $\leibalg$ to $\vect$ restricted to $\leibalgf$, are denoted by $\nt_{\g \in \leibalgf}$.
\end{nota}

First observe that the extension as coderivations given in Remark \ref{rem:coderiv_extn} passes to natural transformations:

\begin{lem}
\label{lem:coderiv_extn_nt}
The extension of linear maps as coderivations induces a $\kring$-linear map
\[
\nt_{\g \in \leibalgf} (S^* (\lieopd (s\g)), \lieopd (s\g)) 
\rightarrow 
\nt_{\g \in \leibalgf} (S^* (\lieopd (s\g)), S^* (\lieopd (s\g))) 
\]
that admits a natural retract induced by the natural projection $S^* (\lieopd (s\g) ) \twoheadrightarrow \lieopd (s\g)$.
\end{lem}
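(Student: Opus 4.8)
The plan is to reduce everything to the single-object statement of Remark \ref{rem:coderiv_extn} by observing that the extension-as-coderivation is assembled solely from the structure maps $\Delta$ and $\mu$ of the symmetric-algebra Hopf algebra, together with the projection $\pi : S^*(-) \twoheadrightarrow S^1(-)$, all of which are natural in the underlying graded vector space. Since a morphism $\phi : \g \to \g'$ in $\leibalgf$ induces a map $\lieopd(s\phi) : \lieopd(s\g) \to \lieopd(s\g')$, the assignment $\g \mapsto \lieopd(s\g)$ is a functor to graded $\kring$-vector spaces; precomposing the Hopf structure maps with this functor makes $\Delta$, $\mu$, and $\pi$ into natural transformations indexed by $\g \in \leibalgf$.

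First I would fix, for each $\g$, the $\kring$-linear map $\Phi_\g$ of Remark \ref{rem:coderiv_extn} (with $W = s^{-1}\lieopd(s\g)$, so that $sW = \lieopd(s\g)$), sending a linear map $\theta_\g : S^*(\lieopd(s\g)) \to \lieopd(s\g)$ to the coderivation
\[
\widetilde{\theta}_\g := \mu_\g \circ (\theta_\g \otimes \id) \circ \Delta_\g.
\]
Given a natural transformation $\theta \in \nt_{\g \in \leibalgf}(S^*(\lieopd(s\g)), \lieopd(s\g))$, the key step is to check that $(\widetilde{\theta}_\g)_\g$ is again natural. For a morphism $\phi$ the relevant naturality square factors as the horizontal pasting of the naturality squares for $\Delta$, for $\theta_\g \otimes \id$, and for $\mu$: the middle square commutes because $\theta_{\g'} \circ S^*(\lieopd(s\phi)) = \lieopd(s\phi) \circ \theta_\g$ by naturality of $\theta$ (the identity being trivially natural on the second tensor factor), while the outer two commute by naturality of $\Delta$ and $\mu$. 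Thus $\Phi : \theta \mapsto \widetilde{\theta}$ lands in $\nt_{\g \in \leibalgf}(S^*(\lieopd(s\g)), S^*(\lieopd(s\g)))$, and it is $\kring$-linear since each $\Phi_\g$ is linear in $\theta_\g$ and a $\kring$-linear combination of natural transformations is again natural.

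For the retract, post-composition with the natural projection $\pi_\g$ sends $\nt_{\g \in \leibalgf}(S^*(\lieopd(s\g)), S^*(\lieopd(s\g)))$ to $\nt_{\g \in \leibalgf}(S^*(\lieopd(s\g)), \lieopd(s\g))$, defining a $\kring$-linear map $\Psi$; the identity $\Psi \circ \Phi = \id$ is verified componentwise and is precisely the retract statement of Remark \ref{rem:coderiv_extn}, namely that the $S^1$-component of the coderivation $\widetilde{\theta}_\g$ recovers $\theta_\g$. The only real work is the bookkeeping in the middle paragraph: identifying the naturality square of $\widetilde{\theta}$ as a pasting of three squares and confirming that the functoriality of $\g \mapsto \lieopd(s\g)$ (through the suspension and free-Lie functors applied to morphisms of Leibniz algebras) is compatible with the Hopf-algebra structure maps. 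Once this compatibility is recorded, the assertion reduces cleanly to the single-object case already established.
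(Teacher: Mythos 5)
Your proof is correct and follows exactly the route the paper intends: the paper states Lemma \ref{lem:coderiv_extn_nt} without proof, regarding it as immediate from Remark \ref{rem:coderiv_extn}, since the extension $\theta \mapsto \mu \circ (\theta \otimes \id) \circ \Delta$ and the retract (post-composition with the projection onto the linear part) are assembled from structure maps natural in the underlying graded vector space, hence preserve naturality in $\g$. Your pasting of the three naturality squares and the componentwise verification of $\Psi \circ \Phi = \id$ is precisely the bookkeeping the paper leaves implicit.
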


\begin{rem}
\label{rem:coderiv_extn_nt}
The result holds more generally: the restriction to $\leibalgf$ is only imposed so that the result can be stated using morphisms in the  category of functors from $\leibalgf$ to $\vect$.
\end{rem}

Restriction associated to the unit $V \hookrightarrow \leibopd (V)$ gives the following:

\begin{prop}
\label{prop:compatibility_extn_restriction}
Restriction induces the $\kring$-linear isomorphism
\[
\xymatrix{
\nt_{\g \in \leibalgf} (S^* (\lieopd (s\g)), \lieopd (s\g)) 
\ar[r]^(.45)\cong
&
\nt_{V \in \vectf} (S^* (\lieopd (sV)), \lieopd (s\leibopd (V))).
}
\]
\end{prop}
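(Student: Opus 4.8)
The plan is to exhibit the restriction map as a bijection by constructing an explicit inverse out of the multiplication of the free-Leibniz monad. Write $U \colon \leibalg \to \vect$ for the forgetful functor and note that both functors on the left-hand side factor through $U$: setting $\bar G(X) := S^* (\lieopd (sX))$ and $\bar F(X) := \lieopd (sX)$ for $X \in \vect$, the source is $\g \mapsto \bar G(U\g)$ and the target is $\g \mapsto \bar F(U\g)$. Both $\bar G$ and $\bar F$ are built from tensor powers and so commute with filtered colimits. Let $\eta_V \colon V \to U\leibopd (V)$ denote the unit (the inclusion of generators) and $\mu_W \colon \leibopd (U\leibopd (W)) \to \leibopd (W)$ the structure map exhibiting $\leibopd(W)$ as a free algebra (the monad multiplication). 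In this notation the restriction map sends $\phi$ to $\psi$, where $\psi_V := \phi_{\leibopd (V)} \circ \bar G(\eta_V)$, which indeed lands in $\nt_{V \in \vectf}(\bar G(V), \bar F(U\leibopd (V)))$.

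First I would extend the indexing from $\leibalgf$ to all free Leibniz algebras $\leibopd (X)$, $X \in \vect$. Since each $\leibopd(X)$ is the filtered colimit of the $\leibopd(X')$ over finite-dimensional $X' \subseteq X$ and $\bar G, \bar F$ preserve filtered colimits, any $\phi \in \nt_{\g \in \leibalgf}(\bar G U, \bar F U)$ extends uniquely to a family natural over all free Leibniz algebras; this is the point at which the viewpoint of Appendix \ref{sect:naturality} is invoked. The same extends $\psi$ to a transformation $\bar G(X) \to \bar F(U\leibopd (X))$ for every $X \in \vect$. I then define the candidate inverse by $\phi_{\leibopd (W)} := \bar F(U\mu_W) \circ \psi_{U\leibopd (W)}$, which is legitimate because every object of $\leibalgf$ has the form $\leibopd(W)$.

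That these two constructions are mutually inverse follows from the two monad unit laws. Reconstructing $\phi$ from a given $\psi$ and then restricting yields $\phi_{\leibopd(W)} \circ \bar G(\eta_W) = \bar F(U\mu_W) \circ \psi_{U\leibopd (W)} \circ \bar G(\eta_W)$; naturality of $\psi$ against $\eta_W$ followed by the unit law $\mu_W \circ \leibopd(\eta_W) = \id$ collapses this to $\psi_W$. Conversely, starting from a genuine $\phi$ with restriction $\psi$, the reconstruction gives $\bar F(U\mu_W) \circ \phi_{\leibopd(U\leibopd (W))} \circ \bar G(\eta_{U\leibopd (W)})$; naturality of $\phi$ against $\mu_W$ rewrites this as $\phi_{\leibopd(W)} \circ \bar G(U\mu_W \circ \eta_{U\leibopd (W)})$, which equals $\phi_{\leibopd(W)}$ by the other unit law $U\mu_W \circ \eta_{U\leibopd (W)} = \id$.

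The main obstacle, and the step meriting care, is verifying that the reconstructed $\phi$ is natural with respect to \emph{all} morphisms of $\leibalgf$, not merely those induced by linear maps of generators (the free functor $\leibopd \colon \vectf \to \leibalgf$ is not full, so this does not come for free). Given a Leibniz morphism $\theta \colon \leibopd (V) \to \leibopd (W)$, the key input is that $\theta$ is an algebra morphism, which is precisely the relation $\theta \circ \mu_V = \mu_W \circ \leibopd (U\theta)$; applying $\bar F U$ to it and combining with the naturality of $\psi$ against the linear map $U\theta$ produces the required square $\phi_{\leibopd(W)} \circ \bar G(U\theta) = \bar F(U\theta) \circ \phi_{\leibopd(V)}$. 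This algebra-morphism identity is exactly what the interpretation of the free functor through the monad multiplication supplies, and it is what bridges the fullness gap. Finally, $\kring$-linearity of restriction and of its inverse is immediate, so restriction is the asserted isomorphism.
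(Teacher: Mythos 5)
Your proof is correct, and it takes a genuinely different route from the paper's. The paper obtains the isomorphism by citing Proposition \ref{prop:refined_prop_operations}: after decomposing $S^* (\lieopd (s\g))$ and $\lieopd (s\g)$ into homogeneous Schur-functor pieces, both natural-transformation spaces are identified with the explicit expression $s^{n-m}\big(\hom_\kring (M^\dag, N^\dag) \otimes_{\sym_n \times \sym_m\op} \cat \leibopd (m,n)\big)$ built from the PROP of the Leibniz operad; that identification rests on full faithfulness of the Schur construction (Proposition \ref{prop:schur_fully_faithful}, where characteristic zero enters) together with the universal-example injectivity argument in the proof of Proposition \ref{prop:opd_operations}. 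You instead run a purely monadic (Kleisli-category) argument: the inverse is written down explicitly as $\bar F(U\mu_W) \circ \psi_{U\leibopd (W)}$, the two composites collapse by the triangle identities of the free--forgetful adjunction, and the counit naturality $\theta \circ \mu_V = \mu_W \circ \leibopd (U\theta)$ is exactly what bridges the non-fullness of $\leibopd : \vectf \to \leibalgf$, which you correctly single out as the delicate point. Your route is characteristic-free, avoids the Schur/PROP machinery entirely, and makes the inverse explicit; the paper's route yields, beyond the bare bijection, the concrete computation of these spaces in terms of $\cat \leibopd (m,n)$, which is needed anyway for Corollary \ref{cor:large_nat} (the extension of the natural operations to arbitrary Leibniz algebras in the proof of Theorem \ref{thm:main}), so the proposition there comes essentially for free. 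The one step you should write out in full is the unique natural extension of $\phi$ and $\psi$ along filtered colimits (needed even to form $\psi_{U\leibopd (W)}$, since $U\leibopd (W)$ is infinite-dimensional): in particular, that naturality for inclusions of finite-dimensional subspaces forces naturality for \emph{all} morphisms between the resulting free algebras, which uses that any morphism $\leibopd (X') \to \leibopd (Y)$ with $X'$ finite-dimensional factors through $\leibopd (Y')$ for some finite-dimensional $Y' \subseteq Y$; this is where the analyticity of all the functors in sight genuinely enters.
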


\begin{proof}
The  morphisms are induced respectively by restriction (with respect to the first variable)  along the free Leibniz algebra functor $\leibopd : \vectf \rightarrow \leibalgf$ and by restriction along the unit map $V \hookrightarrow \leibopd (V)$. That the composite is an isomorphism follows from Proposition \ref{prop:refined_prop_operations}.
\end{proof}

\begin{cor}
\label{cor:nat_acyclic}
The complex $\big( \nt_{\g \in \leibalgf} (S^* (\lieopd (s\g)), \lieopd_\bullet (s\g)), d \big)$
with differential 
$$
d :   \nt_{\g \in \leibalgf} (S^* (\lieopd (s\g)), \lieopd_t (s\g))
\rightarrow 
\nt_{\g \in \leibalgf} (S^* (\lieopd (s\g)), \lieopd_{t-1} (s\g))
$$
 induced by the Leibniz differential 
$\dleib :   \lieopd_t (s\g) \rightarrow \lieopd_{t-1} (s\g)$ has homology concentrated in degree one, where it is isomorphic to 
\[
\nt_{V \in \vectf} (S^* (\lieopd (sV)), s\lieopd (V)).
\]
\end{cor}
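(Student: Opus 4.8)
The plan is to transport the whole complex to the category $\vectf$ by means of Proposition~\ref{prop:compatibility_extn_restriction}, and then to exploit the fact that, over a field of characteristic zero, forming natural transformations out of a fixed analytic functor is exact, so that the homology computation reduces directly to Theorem~\ref{thm:liezation}. First I would record a graded, differential-compatible refinement of Proposition~\ref{prop:compatibility_extn_restriction}. The isomorphism there is induced by restriction along the unit $V \hookrightarrow \leibopd(V)$, an operation concerning only the source variable; since the Leibniz differential acts in the target variable $\lieopd_\bullet(s\g)$, the isomorphism respects the weight decomposition of the target and commutes with post-composition by $\dleib$. Hence for each $t$ one obtains
\[
\nt_{\g \in \leibalgf}(S^*(\lieopd(s\g)), \lieopd_t(s\g)) \cong \nt_{V \in \vectf}(S^*(\lieopd(sV)), \lieopd_t(s\leibopd(V))),
\]
and these assemble into an isomorphism of complexes, where the right-hand side carries the differential induced by $\dleib : \lieopd_t(s\leibopd(V)) \to \lieopd_{t-1}(s\leibopd(V))$. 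In other words, the complex in question is the image under $\nt_{V \in \vectf}(S^*(\lieopd(sV)), -)$ of the weight-graded Pirashvili complex $(\lieopd_\bullet(s\leibopd(V)), \dleib)$ of the free Leibniz algebra, which is functorial in $V$.

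The crux is then to show that $\nt_{V \in \vectf}(S^*(\lieopd(sV)), -)$ is exact on this class of functors, hence commutes with homology. Both $V \mapsto S^*(\lieopd(sV))$ and each $V \mapsto \lieopd_t(s\leibopd(V))$ are analytic functors on $\vectf$. A natural transformation between homogeneous polynomial functors of distinct degrees vanishes, as one sees by testing against the scalar endomorphisms $\lambda\cdot\id_V$ (using that $\kring$ is an infinite field); therefore $\nt_{V \in \vectf}(S^*(\lieopd(sV)), -)$ splits as a product over the polynomial degree $d$ of functors of the form $\hom_{\sym_d}(M_d, -)$, where $M_d$ is the finite-dimensional $\sym_d$-module classifying the degree-$d$ part of $S^*(\lieopd(sV))$ under Schur--Weyl duality. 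Since $\kring$ has characteristic zero, $\kring[\sym_d]$ is semisimple, so each $\hom_{\sym_d}(M_d, -)$ is exact, and a product of exact functors is exact.

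Applying exactness to the complex above yields
\[
H_t\bigl(\nt_{V \in \vectf}(S^*(\lieopd(sV)), \lieopd_\bullet(s\leibopd(V)))\bigr) \cong \nt_{V \in \vectf}\bigl(S^*(\lieopd(sV)), H_t(\lieopd(s\leibopd(V)), \dleib)\bigr).
\]
By Theorem~\ref{thm:liezation} the right-hand side vanishes for $t \neq 1$, while for $t = 1$ the homology $H_1(\lieopd(s\leibopd(V)), \dleib)$, as a graded object sitting in homological degree one, is $s\lieopd(V)$; this gives the claimed identification of the homology, concentrated in degree one, with $\nt_{V \in \vectf}(S^*(\lieopd(sV)), s\lieopd(V))$. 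The main obstacle is the exactness step: one must argue carefully that passing to natural transformations out of the fixed analytic functor $S^*(\lieopd(sV))$ preserves exact sequences, which is precisely where the characteristic-zero hypothesis is indispensable.
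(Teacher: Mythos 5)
Your proposal is correct and takes essentially the same route as the paper's proof: transport the complex to $\nt_{V \in \vectf}$ via Proposition~\ref{prop:compatibility_extn_restriction}, feed in Theorem~\ref{thm:liezation} for the homology of the Pirashvili complex of a free Leibniz algebra, and use characteristic-zero semisimplicity to pass the homology through the natural-transformation functor. The only (immaterial) difference is how the semisimplicity is packaged: the paper splits the acyclic complex of analytic functors naturally using Corollary~\ref{cor:semisimple} and then applies the additive functor $\nt_{V \in \vectf}(S^*(\lieopd(sV)), -)$, whereas you argue that this functor is exact on analytic functors via the Schur correspondence and semisimplicity of $\kring[\sym_d]$ --- two equivalent formulations of the same fact in a semisimple setting.
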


\begin{proof}
By  Proposition \ref{prop:compatibility_extn_restriction}, the given chain complex is isomorphic to 
\[
\big( \nt_{V \in \vectf} (S^* (\lieopd (sV)), \lieopd_\bullet (s\leibopd (V))), d \big)
\]
with differential given by $\dleib : \lieopd_t (s\leibopd (V)))
\rightarrow \lieopd_{t-1} (s\leibopd (V)))$. 

By Theorem \ref{thm:liezation}, Pirashvili's complex gives the acyclic complex 
\[
\ldots \rightarrow \lieopd_2 (s\leibopd (V)))
\rightarrow \lieopd_{2} (s\leibopd (V))
\rightarrow 
s\leibopd (V)
\rightarrow 
\lieopd (V)
\rightarrow 
0,
\]
where the term $\lieopd (V)$ in homological degree zero corresponds to the degree one homology of the Pirashivili complex.

This is an exact sequence of analytic functors on $\vect$ (see Appendix \ref{sect:poly} for a brief review of analytic functors). This sequence splits naturally as  functors on $\vect$ by the semisimplicity of the category of homogeneous polynomial functors over a field of characteristic zero (see Corollary \ref{cor:semisimple}). Explicitly, for each $t\geq 1$, the short exact sequence 
\begin{eqnarray}
\label{eqn:ses_pira_complex}
0
\rightarrow 
\ker \dleib
\rightarrow 
 \lieopd_t (s\leibopd (V)))
 \rightarrow 
 \mathrm{image} \ \dleib 
 \rightarrow 
 0
\end{eqnarray}
splits as functors of $V$ and these short exact sequence splice together to give the exact complex.

Upon applying $\nt_{V \in \vectf} (  S^* (\lieopd (sV)), - ) $, one obtains the result.
\end{proof}

\begin{proof}[Proof of Theorem \ref{thm:main}]
Existence is proved by refining the proof of Proposition \ref{prop:extend}, starting from the initial structure provided by Proposition \ref{prop:leib_stub}. 

First one considers the natural $L_\infty$-structure for the category $\leibalgf$. The structure morphisms in 
 \[
 \nt_{\g \in \leibalgf} (S^n (\lieopd (s\g)), \lieopd (s\g)) 
 \]
are constructed by increasing induction on $n\geq 1$ and on the homological degree. The case $n=1$ is given by the Pirashvili complex and the behaviour in homological degree $\leq 3$ is given by the $3$-stub of  Proposition \ref{prop:leib_stub}.

The inductive step is proved using basic obstruction theory as in the proof of  Proposition \ref{prop:extend}, but working with natural transformations. The vanishing of the relevant obstruction groups is given by Corollary \ref{cor:nat_acyclic}.

The passage to the general case uses Corollary \ref{cor:large_nat}, which provides the natural structure morphisms in 
\[
\hom_\kring  (S^n (\lieopd (s\g)), \lieopd (s\g)) 
\]
for an arbitrary Leibniz algebra $\g$. It remains to show that these induce a natural $L_\infty$-structure on $\g$.
 First one checks that the extension of these structure morphisms to a coderivation $d$ of $S^*(s \g)$ is compatible with the natural $L_\infty$-structure for $\g \in \ob \leibalgf$; this follows from the generalization of Lemma \ref{lem:coderiv_extn_nt} indicated in Remark \ref{rem:coderiv_extn_nt}. 

Finally one checks that $d^2=0$; this is proved using the compatibility of the map of Corollary \ref{cor:large_nat} with composition, as explained in Remark \ref{rem:naturality_large_nat}.

The proof of unicity is a refinement of that of Proposition \ref{prop:unique}, first working with natural transformations on $\leibalgf$. Corollary \ref{cor:nat_acyclic} ensures that the map $f$ appearing in the proof of Proposition \ref{prop:unique} can be constructed as a natural transformation. Moreover, one has the following counterpart of Lemma \ref{lem:coderiv_extn_nt}: the associated $F$ is a natural automorphism of coalgebras. The unicity of the transported $L_\infty$-structure given by Lemma \ref{lem:transfer_L_infty} thus concludes the refinement of the key step in the proof of Proposition \ref{prop:unique}. This establishes the result on the full subcategory $\leibalgf$.

The passage to general Leibniz algebras proceeds as above, {\em mutatis mutandis}, by using Corollary \ref{cor:large_nat}.
 \end{proof}
  
\begin{rem}
\label{rem:non_unicity_natural}
Working with natural transformations for functors defined on $\leibalgf$ rigidifies the situation somewhat; however, the $L_\infty$-isomorphism appearing in the unicity statement is still not unique (cf. Remark \ref{rem:linfty_iso_not_unique}).

The non-unicity stems from the fact that the natural splitting of the short exact sequence (\ref{eqn:ses_pira_complex}) used in the proof of Corollary \ref{cor:nat_acyclic} is not in general unique. To see that this does indeed lead to non-unicity, one has to check that $ \nt_{\g \in \leibalgf} (S^* (\lieopd (s\g)), -)$ detects this.
\end{rem}

 \subsection{An explicit choice for the column $n=2$}

 The initial step in constructing a natural $L_\infty$-structure as in Theorem \ref{thm:main}  
  is the construction of the natural structure morphisms $S^2 (\lieopd (s \g)) \rightarrow s \lieopd (s \g)$. 
An explicit choice for these is exhibited here.
 
Using the exponential property of $S^* (-)$, one has: 
 
 \begin{lem}
 There is a natural isomorphism:
 \[
 S^2 (\lieopd (s \g))
 \cong 
 \Big( \bigoplus_{n\geq 1} S^2 (\lieopd_n (s\g) )  \Big) 
 \oplus 
 \Big ( \bigoplus_{m> n \geq 1} \lieopd_m (s\g) \otimes \lieopd_n (s\g) \Big),
 \]
where  $S^2 (\lieopd_n (s\g) )$ is in homological degree $2n$ and  $\lieopd_m (s\g) \otimes \lieopd_n (s\g)$ is in homological degree $m+n$.
 \end{lem}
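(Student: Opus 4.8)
The plan is to apply the exponential property of $S^*(-)$ to the weight decomposition $\lieopd (s\g) = \bigoplus_{n \geq 1} \lieopd_n (s\g)$ of the free Lie algebra, and then read off the part of symmetric degree two. First I would recall the exponential property (cf. Example \ref{exam:S_exp_Lie}): for graded $\kring$-vector spaces $A,B$ there is a natural isomorphism $S^*(A \oplus B) \cong S^*(A) \otimes S^*(B)$, with Koszul signs built in, and hence more generally $S^* \big( \bigoplus_{n} A_n \big) \cong \bigotimes_{n} S^* (A_n)$. Although the weight decomposition of $\lieopd (s\g)$ has infinitely many summands, each homological degree receives contributions from only finitely many of them (since $\lieopd_n (s\g)$ is concentrated in homological degree $n$), so this tensor product is well-defined degreewise and the isomorphism is natural in $\g$.

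Next I would extract the component of symmetric (polynomial) degree two from $\bigotimes_{n \geq 1} S^* (\lieopd_n (s\g))$. Under the exponential isomorphism this component splits into two families of summands: the \emph{diagonal} terms $S^2 (\lieopd_n (s\g))$, obtained by taking the degree-two part of a single tensor factor $S^* (\lieopd_n (s\g))$; and the \emph{off-diagonal} terms $S^1 (\lieopd_m (s\g)) \otimes S^1 (\lieopd_n (s\g))$ for $m \neq n$, obtained from the product of the linear parts of two distinct factors. Since $S^1(-) = \id$, the off-diagonal term identifies with $\lieopd_m (s\g) \otimes \lieopd_n (s\g)$; the graded symmetry inherent in $S^2$ means that the unordered pair $\{m,n\}$ contributes a single copy, and choosing the representative $m > n$ gives exactly one summand per pair. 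Assembling the two families reproduces the claimed decomposition, and the stated homological degrees follow at once from the weight grading, since $\lieopd_n (s\g) \subset (s\g)^{\otimes n}$ sits in homological degree $n$, whence $S^2 (\lieopd_n (s\g))$ lies in degree $2n$ and $\lieopd_m (s\g) \otimes \lieopd_n (s\g)$ in degree $m+n$.

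Since this is essentially a bookkeeping consequence of the exponential property, there is no substantive obstacle; the only points requiring care are the Koszul signs in the graded symmetric square $S^2 (\lieopd_n (s\g))$ and the convention $m > n$ used to avoid double-counting the off-diagonal summands. Both are handled automatically by the sign conventions carried by the exponential isomorphism, so no further argument is needed beyond stating the identification explicitly.
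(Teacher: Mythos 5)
Your proof is correct and follows exactly the paper's route: the paper derives this lemma directly from the exponential property of $S^*$ (Example on exponential functors), decomposing $S^*(\lieopd(s\g)) \cong \bigotimes_{n\geq 1} S^*(\lieopd_n(s\g))$ via the weight grading and reading off the polynomial-degree-two part as the diagonal terms $S^2(\lieopd_n(s\g))$ plus the off-diagonal terms $\lieopd_m(s\g)\otimes\lieopd_n(s\g)$ with $m>n$. Your write-up simply makes explicit the bookkeeping (finiteness in each homological degree, $S^1 = \id$, one summand per unordered pair) that the paper leaves implicit.
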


 The Leibniz differential on $\lieopd (s\g)$ makes $S^2 (\lieopd (s \g))$ into a complex. 
 The differential acts via the induced morphisms:
 \begin{enumerate}
 \item 
 $S^2 (\lieopd_n (s\g )) \rightarrow  \lieopd_{n} (s\g) \otimes \lieopd_{n-1} (s\g)$ for $n \geq 1$; 
 \item 
 \[
 \xymatrix{
 \lieopd_m (s\g) \otimes \lieopd_n (s\g)
 \ar[r]
 \ar[d]
 &
 \lieopd_m (s\g) \otimes \lieopd_{n-1} (s\g)
 \\
 \lieopd_{m-1} (s\g) \otimes \lieopd_n (s\g);
 }
 \]
 if $m=n+1$, the vertical map is further composed with $\lieopd_{n} (s\g) \otimes \lieopd_n (s\g) \stackrel{\mu}{\rightarrow} S^2 (\lieopd_n (s\g))$.
 \end{enumerate}

 \begin{prop}
 There exists a natural $L_\infty$-structure as in Theorem \ref{thm:main} extending the $3$-stub of Proposition \ref{prop:leib_stub} such that the components $S^2 (\lieopd (s\g)) \rightarrow \lieopd (s\g)$ of the $L_\infty$-structure are:
 \begin{enumerate}
 \item 
 the morphism $S^2 (s\g) \rightarrow s (s\g)$ given by the $3$-stub;
 \item 
 the morphism $\dlie : \lieopd_m (s\g) \otimes s\g \rightarrow s \lieopd_m (s\g)$ for $m>1$ (taking $n=1$ so that $s\g = \lieopd_1 (s\g)$);
 \item 
 zero otherwise.
 \end{enumerate}
 \end{prop}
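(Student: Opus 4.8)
The plan is to treat this proposition as a refinement of Theorem \ref{thm:main} in which only the column $n=2$ is pinned down explicitly. The sole genuine content is to verify that the three prescribed components assemble into a natural degree $-1$ map
\[
d_{2,1}\ :\ \big(S^2(\lieopd(s\g)),\, d_{2,2}\big) \longrightarrow (\lieopd(s\g), \dleib)
\]
satisfying the $L_\infty$-relation $\dleib\circ d_{2,1} + d_{2,1}\circ d_{2,2} = 0$ (the case $n=2$, $l=1$ of Lemma \ref{lem:d2_nl}), and that it restricts to the $3$-stub of Proposition \ref{prop:leib_stub}. Once this is in hand, the components $d_{n,1}$ for $n\geq 3$ are produced by exactly the inductive obstruction-theoretic construction in the proof of Theorem \ref{thm:main}, whose obstruction groups vanish by Corollary \ref{cor:nat_acyclic}. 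Naturality is immediate, since $\beta$ on $S^2(s\g)$, $\dlie_{m,1}$ on $\lieopd_m(s\g)\otimes s\g$, and zero on the remaining summands are each natural in $\g$; and by inspection $d_{2,1}$ restricts to $\beta$ on $S^2(s\g)$ and to $\delta=\dlie_{2,1}$ on $\lieopd_2(s\g)\otimes s\g$.

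The first step I would carry out is to isolate the mechanism that forces the relation to hold. By Remark \ref{rem:leibniz_kernel}, the restriction $\dleib\colon \lieopd_2(s\g)\cong s^2\Gamma^2\g \rightarrow s\g$ is (up to sign) the suspension of the product $\Gamma^2\g\to\g$, whose image is the Leibniz kernel $\langle x^2\rangle_\kring$; concretely $\dleib[sx,sy] = -s(xy+yx)$. Since the right action of $\g$ on $T(s\g)$ factors through $\glie = \g/\langle x^2\rangle_\kring$, this yields the key vanishing: for every $Y\in\lieopd_p(s\g)$ and every $b\in\lieopd_2(s\g)$,
\[
\dlie_{p,1}\big(Y\otimes \dleib b\big) = (-1)^{p}\, Y\cdot(s^{-1}\dleib b) = 0,
\]
using equation (\ref{eqn:dlie}) together with the fact that $s^{-1}\dleib b$ lies in the Leibniz kernel and hence maps to $0$ in $\glie$. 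This single observation is what I expect to drive the whole verification.

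Next I would check $\dleib\circ d_{2,1} + d_{2,1}\circ d_{2,2}=0$ summand by summand along the decomposition of $S^2(\lieopd(s\g))$ recorded above, using the explicit form of the internal differential $d_{2,2}$ given there. On $S^2(s\g)$ the differential $d_{2,2}$ vanishes and $\dleib\circ\beta = 0$ since $\beta$ takes values in $\lieopd_1(s\g)=s\g$. On $\lieopd_m(s\g)\otimes s\g$ the relation reduces, for $m\geq 3$, to the anti-commutation of $\dleib$ and $\dlie$ from Remark \ref{rem:dleib_glie-linear}, and for $m=2$ to the $3$-stub relation already verified in Proposition \ref{prop:leib_stub} (the two composites $\lieopd_2(s\g)\otimes s\g\rightarrow s\g$ summing to zero). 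On the remaining summands $S^2(\lieopd_n(s\g))$ with $n\geq 2$ and $\lieopd_m(s\g)\otimes\lieopd_n(s\g)$ with $m>n\geq 2$, the component $d_{2,1}$ is zero, so only $d_{2,1}\circ d_{2,2}$ must be shown to vanish; inspecting $d_{2,2}$, the only summands of its image on which $d_{2,1}$ is non-zero are of the shape $\lieopd_p(s\g)\otimes s\g$, and these occur precisely by applying $\dleib$ to a tensor factor lying in $\lieopd_2(s\g)$, so the resulting contribution $\dlie_{p,1}(Y\otimes\dleib b)$ with $b\in\lieopd_2(s\g)$ vanishes by the key observation.

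The main obstacle, and the step warranting the most care, is this last case: the summands $S^2(\lieopd_2(s\g))$ and $\lieopd_m(s\g)\otimes\lieopd_2(s\g)$ are exactly where $d_{2,2}$ carries a class into a region ($\lieopd_p(s\g)\otimes s\g$) on which $d_{2,1}=\dlie_{p,1}$ is non-zero, so the relation is not formal there and genuinely depends on the Leibniz kernel acting trivially; the other summands and the $m\geq 3$ case are controlled by the $\glie$-linearity of Remark \ref{rem:dleib_glie-linear} and the $3$-stub relations. With the chain map established and seen to be natural and to extend the $3$-stub, I would conclude by feeding it as the $n=2$ column into the construction of Theorem \ref{thm:main}, obtaining the higher components $d_{n,1}$ ($n\geq 3$) by the same obstruction argument and hence the asserted natural $L_\infty$-structure.
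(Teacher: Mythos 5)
Your proposal is correct and takes essentially the same route as the paper: the paper's (very terse) proof rests on exactly the two ingredients you isolate — the anti-commutation of $\dleib$ and $\dlie$ from Remark \ref{rem:dleib_glie-linear}, and the vanishing of $\dlie$ on $\lieopd_m(s\g) \otimes \mathrm{image}\big(\lieopd_2(s\g) \rightarrow s\g\big)$ (the Leibniz kernel acting trivially through $\glie$) — together with the $3$-stub in degrees $\leq 3$ and the obstruction-theoretic induction of Theorem \ref{thm:main} for the columns $n \geq 3$. Your summand-by-summand check of $\dleib \circ d_{2,1} + d_{2,1} \circ d_{2,2} = 0$ merely makes explicit what the paper leaves implicit.
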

 
 \begin{proof}
 In homological degrees $\leq 3$, the $L_\infty$-structure morphisms are imposed by Proposition \ref{prop:leib_stub}.

The result is then proved by increasing induction on homological degree. There are two ingredients:
\begin{enumerate}
\item 
the compatibility of $\dlie$ with the Leibniz differential (see Remark \ref{rem:dleib_glie-linear});
\item 
the vanishing of $\dlie$ on restriction to $\lieopd_m (s\g) \otimes \big( \mathrm{image} (\lieopd_2 (s\g) \rightarrow s\g)\big)$.
\end{enumerate} 
 \end{proof}

\appendix
\section{Polynomial and analytic functors}
\label{sect:poly}

In this Section polynomial and analytic functors from $\vect$ to $\vect$ over a field $\kring$ of characteristic zero are reviewed in sufficient generality for the applications here. Standard references are \cite[Appendix A to Chapter I]{MR3443860} for polynomial functors  and \cite[Chapitre 4]{MR927763} for polynomial and analytic functors.
 
\subsection{Polynomial functors}

\begin{defn}
\label{defn:poly}
For $d \in \nat$ and $M$ a right $\sym_d$-module, the associated Schur functor $\tilde{M} : \vect \rightarrow \vect$ is given by
\[
V \mapsto 
\tilde{M}(V) := M\otimes_{\sym_d} V^{\otimes d}.
\]
This is a (homogeneous) polynomial functor of degree $d$.  A polynomial functor is a functor that is a finite direct sum of homogeneous polynomial functors (possibly of different degree).
\end{defn}

\begin{exam}
The functor $V \mapsto \lieopd_d (V)$ is homogeneous polynomial of degree $d$. 
\end{exam}

\begin{prop}
\label{prop:tensor_comp_poly}
Let $F, G : \vect \rightarrow \vect$ be homogeneous polynomial  functors of degree $d$ and $e$ respectively. Then 
\begin{enumerate}
\item 
the tensor product $F \otimes G$ defined by $(F\otimes G)(V):= F(V) \otimes G(V)$ is homogeneous polynomial of degree $d+e$; 
\item 
the composite $F \circ G$ defined by $F\circ G (V):= F(G(V))$ is homogeneous polynomial of degree $de$.
\end{enumerate}
\end{prop}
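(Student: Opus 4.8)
The plan is to work directly from the Schur-functor description of Definition \ref{defn:poly}, producing in each case an explicit module whose associated Schur functor is the functor in question. Throughout, write $F = \tilde{M}$ for $M$ a right $\sym_d$-module and $G = \tilde{N}$ for $N$ a right $\sym_e$-module. The single algebraic fact I would use repeatedly is the standard identity
\[
\mathrm{Ind}_H^G (P) \otimes_G X \cong P \otimes_H \mathrm{Res}_H X,
\]
valid for $H \subseteq G$ finite groups, a right $H$-module $P$ and a left $G$-module $X$; it follows from associativity of the tensor product together with $\kring G \otimes_G X \cong X$.

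For the tensor product I would first establish the natural isomorphism
\[
(M \otimes_{\sym_d} V^{\otimes d}) \otimes (N \otimes_{\sym_e} V^{\otimes e}) \cong (M \otimes N) \otimes_{\sym_d \times \sym_e} V^{\otimes (d+e)},
\]
where $M \otimes N$ is the outer tensor product, regarded as a right $\sym_d \times \sym_e$-module, and the Young subgroup $\sym_d \times \sym_e$ acts on $V^{\otimes(d+e)} = V^{\otimes d} \otimes V^{\otimes e}$ block-wise. Applying the induction identity with $H = \sym_d \times \sym_e \subset G = \sym_{d+e}$ and $X = V^{\otimes(d+e)}$ then rewrites this as $\tilde{M'}(V)$ for $M' := \mathrm{Ind}_{\sym_d \times \sym_e}^{\sym_{d+e}}(M \otimes N)$, exhibiting $F \otimes G$ as homogeneous polynomial of degree $d+e$.

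For the composite the same mechanism applies over a wreath product. I would unwind
\[
F(G(V)) = M \otimes_{\sym_d} (N \otimes_{\sym_e} V^{\otimes e})^{\otimes d} \cong M \otimes_{\sym_d} \big(N^{\otimes d} \otimes_{(\sym_e)^{\times d}} V^{\otimes de}\big),
\]
using that $-\otimes V^{\otimes e}$ and the coinvariants commute, and then observe that the $\sym_d$-action permuting the $d$ outer factors normalizes the block subgroup $(\sym_e)^{\times d}$. The two coinvariants therefore combine into a single coinvariant for the wreath product $\sym_e \wr \sym_d = (\sym_e)^{\times d} \rtimes \sym_d$, realized inside $\sym_{de}$ as the stabilizer of the partition into $d$ blocks of size $e$. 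This yields $F \circ G (V) \cong (M \otimes N^{\otimes d}) \otimes_{\sym_e \wr \sym_d} V^{\otimes de}$, and one further application of the induction identity with $H = \sym_e \wr \sym_d \subset \sym_{de}$ presents $F \circ G$ as the Schur functor of $\mathrm{Ind}_{\sym_e \wr \sym_d}^{\sym_{de}}(M \otimes N^{\otimes d})$, homogeneous of degree $de$.

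The routine content is the verification that all these isomorphisms are natural in $V$ and equivariant for the relevant group actions; I would not grind through it. The one step demanding genuine care is the composite, where I expect the main obstacle to be confirming that the $\sym_d$- and $(\sym_e)^{\times d}$-actions assemble correctly into the wreath-product action and that the embedding $\sym_e \wr \sym_d \hookrightarrow \sym_{de}$ is compatible with the place-permutation action on $V^{\otimes de}$ — essentially the classical identification of composition of Schur functors with wreath-product (plethystic) induction. Since $\kring$ has characteristic zero, none of this bookkeeping is obstructed by a failure of flatness, so no additional hypotheses are required.
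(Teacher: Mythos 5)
Your proposal is correct. Note that the paper itself gives no proof of this proposition: it appears in the review Appendix \ref{sect:poly}, where it is quoted as standard material with references to Macdonald and Joyal. Your argument is precisely the classical one those sources contain — exhibiting $F \otimes G$ as the Schur functor of $\mathrm{Ind}_{\sym_d \times \sym_e}^{\sym_{d+e}}(M \otimes N)$ and $F \circ G$ as the Schur functor of the plethystic induction $\mathrm{Ind}_{\sym_e \wr \sym_d}^{\sym_{de}}(M \otimes N^{\otimes d})$ — and both reductions, including the wreath-product bookkeeping, are sound. One small remark: your closing appeal to characteristic zero is unnecessary, since the identity $\mathrm{Ind}_H^G(P) \otimes_G X \cong P \otimes_H \mathrm{Res}_H X$ and the interchange isomorphisms for coinvariants hold over an arbitrary commutative ring (they are instances of associativity of the tensor product), so the proposition as you prove it requires no hypothesis on $\kring$ at all; characteristic zero matters elsewhere in the paper (e.g.\ for semisimplicity in Corollary \ref{cor:semisimple}), not here.
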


\begin{rem}
Polynomial functors are determined by their restriction to the full subcategory $\vectf \subset 
\vect$ of finite-dimensional vector spaces. (More precisely, a polynomial functor is equivalent to the left Kan extension of its restriction to $\vectf$.) The category $\vectf$ is essentially small, so this allows consideration of the category of functors from $\vectf$ to $\vect$; morphisms of this category are denoted $\nt_{V\in \vectf}$.  
\end{rem}

\begin{prop} 
\label{prop:schur_fully_faithful}
For $d \in \nat$, the Schur functor from $\sym_d\op$-modules to endofunctors of $\vect$ 
 is fully faithful. In particular,  for $M$ a right $\sym_d$-module, there is a natural isomorphism of $\sym_d\op$-modules:
\[
M \cong 
 \nt _{V\in \vectf} (V^{\otimes d}, \tilde{M}(V))
\] 
where the $\sym_d\op$-action on the right hand side is induced by place permutations of the tensor factors $V^{\otimes d}$.
\end{prop}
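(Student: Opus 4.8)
The plan is to reduce the entire statement to a single Yoneda-type computation for the functor $V \mapsto V^{\otimes d}$: the identification of $\nt_{V \in \vectf}(V^{\otimes d}, F(V))$ with the \emph{multilinear part} of $F(\kring^d)$, valid for an arbitrary homogeneous polynomial functor $F$ of degree $d$. Granting this, the displayed isomorphism is the special case $F = \tilde{M}$, and full faithfulness follows by an additivity argument exploiting the semisimplicity of $\kring[\sym_d]$ in characteristic zero.

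First I would prove the key lemma. Writing $\kring^d = \bigoplus_{i=1}^d \kring$ with basis $e_1, \ldots, e_d$, I would show that evaluation at the generator $e_1 \otimes \cdots \otimes e_d \in (\kring^d)^{\otimes d}$ defines an isomorphism
\[
\nt_{V \in \vectf}(V^{\otimes d}, F(V)) \xrightarrow{\ \cong\ } F(\kring^d)_{\mathrm{mult}},
\]
where $F(\kring^d)_{\mathrm{mult}}$ is the component of multidegree $(1,\ldots,1)$ for the action of the diagonal torus $(\kring^\times)^d$ on $F(\kring^d)$ induced by functoriality. Injectivity is immediate from naturality: for $v_1, \ldots, v_d \in V$ the linear map $g : \kring^d \to V$, $e_i \mapsto v_i$, gives $\phi_V(v_1 \otimes \cdots \otimes v_d) = F(g)\bigl(\phi_{\kring^d}(e_1 \otimes \cdots \otimes e_d)\bigr)$, and such tensors span $V^{\otimes d}$, so $\phi$ is determined by $w := \phi_{\kring^d}(e_1 \otimes \cdots \otimes e_d)$; naturality with respect to the diagonal scalings of $\kring^d$ forces $w \in F(\kring^d)_{\mathrm{mult}}$. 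For surjectivity I would run this backwards, defining $\phi_V(v_1 \otimes \cdots \otimes v_d) := F(g)(w)$ and checking that the weight condition on $w$ makes this assignment multilinear, hence that it descends to $V^{\otimes d}$.

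Next I would specialize to $F = \tilde{M}$. Since $\tilde{M}(\kring^d) = M \otimes_{\sym_d} (\kring^d)^{\otimes d}$ and the multilinear part of $(\kring^d)^{\otimes d}$ is $\bigoplus_{\sigma \in \sym_d} \kring\,(e_{\sigma(1)} \otimes \cdots \otimes e_{\sigma(d)}) \cong \kring[\sym_d]$ as a left $\sym_d$-module under place permutations, passing to the multilinear part commutes with $M \otimes_{\sym_d} -$ and yields $\tilde{M}(\kring^d)_{\mathrm{mult}} \cong M \otimes_{\sym_d} \kring[\sym_d] \cong M$, whence $M \cong \nt_{V \in \vectf}(V^{\otimes d}, \tilde{M}(V))$. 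The residual right $\sym_d$-action (permuting the basis $e_i$) matches, under evaluation at $e_1 \otimes \cdots \otimes e_d$, the action on $\nt$ by precomposition with place permutations of the tensor factors, which I would record to obtain an isomorphism of $\sym_d\op$-modules. For full faithfulness I would then compare $\hom_{\sym_d\op}(M, N) \to \nt_{V}(\tilde{M}(V), \tilde{N}(V))$: this is exactly the computation just made when $M = \kring[\sym_d]$ (so $\tilde{M}(V) = V^{\otimes d}$ and $\hom_{\sym_d\op}(\kring[\sym_d], N) \cong N$). Both sides are additive in $M$, and by semisimplicity every right $\sym_d$-module is a direct summand of a sum of copies of $\kring[\sym_d]$; transporting the defining idempotent through the natural comparison reduces the general case to the free one.

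The hard part will be the surjectivity direction of the key lemma: verifying that a weight-$(1,\ldots,1)$ element $w \in F(\kring^d)$ genuinely defines a multilinear, hence tensorial, natural transformation. This is precisely where characteristic zero enters, through the decomposition of $F(\kring^d)$ into torus weight spaces; equivalently, it is the standard fact that the top cross-effect of a degree-$d$ functor is multilinear (\cite[Chapitre 4]{MR927763}), which I would invoke if a direct weight-space argument proves cumbersome. Everything else — injectivity, the computation of $\tilde{M}(\kring^d)_{\mathrm{mult}}$, the $\sym_d\op$-equivariance, and the additivity bootstrap — is formal.
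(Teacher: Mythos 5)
Your proposal is correct. The paper itself gives no proof of this proposition --- it is recalled as a standard fact, with Macdonald \cite{MR3443860} and Joyal \cite{MR927763} cited at the head of the appendix --- and your argument (evaluation at $e_1 \otimes \cdots \otimes e_d$ identifying $\nt_{V \in \vectf}(V^{\otimes d}, F(V))$ with the multilinear weight component of $F(\kring^d)$, the computation $\tilde{M}(\kring^d)_{\mathrm{mult}} \cong M \otimes_{\sym_d} \kring[\sym_d] \cong M$, and the semisimplicity bootstrap for full faithfulness) is precisely the standard linearization argument of those references, so there is nothing to fault.
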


Since $\kring [\sym_d]$ is semisimple for $\kring$ a field of characteristic zero, one deduces:

\begin{cor}
\label{cor:semisimple}
For $d \in \nat$, the category of homogeneous polynomial functors of degree $d$ is semisimple.
\end{cor}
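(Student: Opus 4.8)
The plan is to deduce semisimplicity by transporting it across an equivalence of categories, exhibiting homogeneous polynomial functors of degree $d$ as a category equivalent to that of right $\sym_d$-modules. The Schur functor construction $M \mapsto \tilde{M}$ of Definition \ref{defn:poly} provides a functor $\mathcal{S}$ from right $\sym_d$-modules to homogeneous polynomial functors of degree $d$. First I would record that $\mathcal{S}$ is full and faithful, which is precisely the content of Proposition \ref{prop:schur_fully_faithful} (the natural isomorphism $M \cong \nt_{V \in \vectf}(V^{\otimes d}, \tilde{M}(V))$ gives back $\hom$-sets on the module side). Next I would observe that $\mathcal{S}$ is essentially surjective: by Definition \ref{defn:poly} a homogeneous polynomial functor of degree $d$ is by its very construction of the form $\tilde{M}$ for some right $\sym_d$-module $M$, so $\mathcal{S}$ reaches every object up to isomorphism. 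A full, faithful, essentially surjective functor is an equivalence, so $\mathcal{S}$ identifies the two categories.

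It then remains to transfer semisimplicity from the module side. Since $\kring$ has characteristic zero, Maschke's theorem shows the group algebra $\kring[\sym_d]$ is a semisimple $\kring$-algebra; consequently the category of right $\sym_d$-modules (equivalently, of left $\kring[\sym_d]\op$-modules) is semisimple, meaning every short exact sequence splits and every object decomposes as a direct sum of simples. Because an equivalence of categories preserves the entire abelian structure — kernels, cokernels, and hence exactness and the splitting of short exact sequences — semisimplicity is carried along $\mathcal{S}$ to the category of homogeneous polynomial functors of degree $d$. This yields the Corollary, and in particular justifies the natural splitting of the sequence (\ref{eqn:ses_pira_complex}) invoked in Corollary \ref{cor:nat_acyclic}.

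The argument is a formal consequence of Proposition \ref{prop:schur_fully_faithful} together with Maschke's theorem, so there is no genuine computational difficulty. The single point deserving care is that semisimplicity is an \emph{abelian-categorical} property (it is phrased in terms of short exact sequences), so one must be explicit that $\mathcal{S}$ is an equivalence of abelian categories and not merely of the underlying categories. This is automatic once one notes that the abelian structure on the category of homogeneous polynomial functors of degree $d$ is the one inherited through $\mathcal{S}$ from the module category — equivalently, that $\mathcal{S}$ sends a short exact sequence of $\sym_d$-modules to a short exact sequence of functors, which holds since $M \mapsto M \otimes_{\sym_d} V^{\otimes d}$ is exact over the semisimple ring $\kring[\sym_d]$. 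This compatibility is the step I would state most carefully.
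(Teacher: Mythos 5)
Your proposal is correct and follows essentially the same route as the paper: the paper also deduces the Corollary from the full faithfulness of the Schur functor (Proposition \ref{prop:schur_fully_faithful}), essential surjectivity being tautological from Definition \ref{defn:poly}, together with semisimplicity of $\kring[\sym_d]$ in characteristic zero. Your extra care about exactness of $M \mapsto M \otimes_{\sym_d} V^{\otimes d}$ and the transfer of the abelian structure is a welcome elaboration of details the paper leaves implicit.
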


\begin{rem}
There is no interaction between homogeneous polynomial functors of different degree; this is a consequence of the fact that $\nt_{V \in \vect} (V^{\otimes d}, V^{\otimes e})=0$ if $d\neq e$. It follows that the category of polynomial functors is semisimple.
\end{rem}

\subsection{Analytic functors}
The Schur functor construction extends to treating sequences $\{M(n) | n \in \nat \}$  where $M(n)$ is a $\sym_n\op$-module. 
Namely, the associated Schur functor
\[
\tilde{M} (V) 
:= 
\bigoplus_{n\in \nat}
M(n) \otimes_{\sym_n} V^{\otimes n}
\]
is the direct sum of the Schur functors for the  modules $M(n)$. 

By definition, an analytic functor is one of this form. In particular, a polynomial functor is analytic.

\begin{exam}
\label{exam:leib_opd_schur}
Leibniz algebras are governed by the Leibniz operad $\leibopd$. In particular, the associated functor $V \mapsto \leibopd (V)$ is analytic.  The free Leibniz algebra on $V$ was described by Loday and Pirashvili \cite[Lemma 1.3]{LP}:
 \[
 \leibopd (V) \cong \bigoplus_{n\geq 1} T^n (V). 
 \] 
 
 In particular, this identifies the terms of the Leibniz operad as $\sym_n\op$-modules 
 as $\leibopd(0)=0$ and $\leibopd(n) = \kring [\sym_n]$ for $n \geq 1$.
 \end{exam}

\begin{nota}
\label{nota:poly}
For an analytic functor $F$ on $\vect$, write $F_{[d]}$ for the direct summand of polynomial degree $d \in \nat$,  so that $F\cong \bigoplus_{d \in \nat} F_{[d]}$.
\end{nota}

\begin{rem}
The construction $(-)_{[d]}$ gives an exact functor from analytic functors to homogeneous polynomial functors of degree $d$.
\end{rem}

Proposition \ref{prop:tensor_comp_poly} has the following counterpart for analytic functors:

\begin{prop}
Let $F, G : \vect \rightarrow \vect$ be analytic functors. Then 
$F \otimes G$ and $F \circ G$
are analytic.
\end{prop}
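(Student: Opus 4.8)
The plan is to reduce everything to the homogeneous case already settled in Proposition \ref{prop:tensor_comp_poly}, using the canonical decomposition $F \cong \bigoplus_{d \in \nat} F_{[d]}$ (and likewise $G \cong \bigoplus_{e \in \nat} G_{[e]}$) into homogeneous polynomial summands provided by Notation \ref{nota:poly}. The two facts I would exploit throughout are that both $(-)\otimes(-)$ and $(-)\circ(-)$ commute with direct sums in each variable, and that an \emph{arbitrary} direct sum of homogeneous polynomial functors all of the same degree $N$ is again homogeneous polynomial of degree $N$. This last point is crucial and requires no finiteness hypothesis: since $M \mapsto M \otimes_{\sym_N}(-)^{\otimes N}$ commutes with arbitrary direct sums in $M$, one has $\bigoplus_\alpha \big(M_\alpha \otimes_{\sym_N} V^{\otimes N}\big) \cong \big(\bigoplus_\alpha M_\alpha\big)\otimes_{\sym_N} V^{\otimes N}$, so the (possibly infinite) direct sum is the Schur functor of the single $\sym_N\op$-module $\bigoplus_\alpha M_\alpha$.

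For the tensor product, distributivity gives $F \otimes G \cong \bigoplus_{d,e} F_{[d]} \otimes G_{[e]}$, and Proposition \ref{prop:tensor_comp_poly} identifies $F_{[d]} \otimes G_{[e]}$ as homogeneous of degree $d+e$. Collecting the summands with $d+e = N$, a finite set for each $N$, exhibits $F \otimes G$ as $\bigoplus_N \big(\bigoplus_{d+e=N} F_{[d]}\otimes G_{[e]}\big)$, with each inner bracket homogeneous of degree $N$; hence $F \otimes G$ is analytic.

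For the composite I would first treat $F_{[d]} \circ G$ for a fixed homogeneous $F_{[d]} = M \otimes_{\sym_d}(-)^{\otimes d}$. Expanding,
\[
F_{[d]}(G(V)) = M \otimes_{\sym_d} \Big( \bigoplus_{e} G_{[e]}(V)\Big)^{\otimes d} = M \otimes_{\sym_d} \bigoplus_{(e_1,\dots,e_d)} G_{[e_1]}(V)\otimes \cdots \otimes G_{[e_d]}(V).
\]
The place-permutation action of $\sym_d$ preserves the total degree $N = \sum_i e_i$, so the degree-$N$ part of $G(V)^{\otimes d}$ is $\sym_d$-stable and is a finite direct sum of tensor products of homogeneous functors of total degree $N$; applying $M \otimes_{\sym_d}(-)$ and Proposition \ref{prop:tensor_comp_poly} shows it is homogeneous of degree $N$. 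Thus $F_{[d]} \circ G$ is analytic. Finally $F \circ G \cong \bigoplus_d (F_{[d]} \circ G)$, whose degree-$N$ part is $\bigoplus_d (F_{[d]}\circ G)_{[N]}$; by the direct-sum remark of the first paragraph this is again homogeneous of degree $N$, so $F \circ G$ is analytic.

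The main obstacle, and the only genuinely delicate point, is that when $G(0) = G_{[0]} \neq 0$ the index $d$ contributing to a fixed degree $N$ is unbounded: padding a tensor word with arbitrarily many constant factors drawn from $G_{[0]}$ does not change its degree in $V$, so the degree-$N$ component of $F \circ G$ is an infinite direct sum over $d$. The resolution is exactly the cocontinuity of the Schur functor construction recorded in the first paragraph, which guarantees that this infinite direct sum of degree-$N$ Schur functors is still the Schur functor of a single (a priori infinite-dimensional) $\sym_N\op$-module; this is all that the definition of an analytic functor demands, so no finiteness is lost.
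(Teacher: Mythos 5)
Your proof is correct. For comparison: the paper offers no proof of this proposition at all — it is stated as a standard counterpart of Proposition \ref{prop:tensor_comp_poly}, with the general theory deferred to the cited references on polynomial and analytic functors — so your argument is a genuine addition rather than a variant of the paper's route. The structure of your reduction (homogeneous decomposition, distributivity over direct sums, plethysm for a single homogeneous $F_{[d]}$) is the standard one, and you correctly isolate the only delicate point: when $G_{[0]} \neq 0$, the degree-$N$ component of $F \circ G$ is an infinite direct sum over $d$, and this is harmless precisely because Definition \ref{defn:poly} places no finiteness hypothesis on the module, so cocontinuity of $M \mapsto M \otimes_{\sym_N} (-)^{\otimes N}$ closes homogeneous degree-$N$ functors under arbitrary direct sums. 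One step you pass over quickly: to conclude that $M \otimes_{\sym_d}\bigl((G^{\otimes d})_{[N]}\bigr)$ is homogeneous of degree $N$, you need that taking coinvariants of a finite group acting by natural transformations on a homogeneous degree-$N$ functor again yields such a functor; this follows either from the full faithfulness of the Schur construction (Proposition \ref{prop:schur_fully_faithful}), which transports the $\sym_d$-action to the underlying $\sym_N\op$-module so that the coinvariants are computed there and commute with $-\otimes_{\sym_N} V^{\otimes N}$, or, in characteristic zero, by splitting off the image of the averaging idempotent. With that remark supplied, your argument is complete.
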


\begin{exam}
\ 
\begin{enumerate}
\item 
The functor $V \mapsto \lieopd (V)$ is analytic, with $\lieopd (V)_{[d]} = \lieopd_d (V)$.
\item 
The functor $V\mapsto S^* (V)$ is analytic, with $S^* (V)_{[d]} = S^d (V)$.
\end{enumerate}
Hence the composite $V \mapsto S^* (\lieopd (V))$ is analytic. 
\end{exam}

\begin{exam}
\label{exam:S_exp_Lie}
The functor $S^*$ is exponential, i.e., there is a natural isomorphism $S^* (V \oplus W) \cong S^* (V) \otimes S^* (W)$. For example, this gives the natural isomorphism $ 
S^* (\lieopd (V))
\cong 
\bigotimes_{n\geq 1} 
S^* (\lieopd_n (V))$. This can be developed to calculate the homogeneous components of $S^* (\lieopd (V))$.
\end{exam}

\section{Naturality}
\label{sect:naturality}

This Section recalls how to understand natural transformations between polynomial functors evaluated on algebras over an operad; in particular, no claim to originality is made. The main result, Proposition \ref{prop:refined_prop_operations}, is used in the proof of Theorem \ref{thm:main}.

\subsection{The operadic case}

First recall the operadic framework for studying operations on algebras (see \cite{LV} for definitions). Let $\opd$ be an operad in $\kring$-vector spaces with unit $I \rightarrow \opd$. The category of $\opd$-algebras is denoted $\opdalg$.

\begin{rem}
\label{rem:small}
One would like to work with the `category of functors' from $\opdalg$ to $\vect$. However,   the category $\opdalg$ is not essentially small. To avoid introducing universes, one restricts to the full subcategory $\opdalgf$ of $\opdalg$ with objects free $\opd$-algebras of the form $\opd (V)$ with $V \in \ob \vectf$; by construction,  this subcategory has a small skeleton. The morphisms in the category of functors from $\opdalgf$ to $\vect$ are denoted by $\nt_{A \in \opdalgf}$.

Since the only functors  considered here are given by composing the forgetful functor 
 $\opdalg \rightarrow \vect$ with a polynomial functor (or a graded variant of such), Corollary \ref{cor:large_nat} shows that working with the restriction to $\opdalgf$ is sufficient for current purposes. 
\end{rem}

An $\opd$-algebra structure on a $\kring$-vector space $V$ is given by a morphism of operads $
\opd 
\rightarrow 
\opend (V)$,   where $\opend (V)$ is the endomorphism operad for $V$. 
In particular, one can view $\opd (n)$ as the $\kring$-vector space of $n$-ary operations on 
 $\opdalg$ by exploiting the  functors   $A \mapsto A^{\otimes n}$ from $\opdalg$ to $\vect$, for $n \in \nat$.
  The $\opd$-algebra structure induces 
a  morphism of $\sym_n\op$-modules
\[
\opd (n) 
\rightarrow 
\hom_{\vect} (A^{\otimes n}, A),
\]
where the $\sym_n\op$ action on the codomain is induced by the place permutation action on $A^{\otimes n}$. 
 Restricting to $\opdalgf$, this gives the morphism of $\sym_n\op$-modules
 \begin{eqnarray*}
 \opd (n) 
 \rightarrow 
 \nt _{A \in \opdalgf} (A^{\otimes n}, A).
 \end{eqnarray*}

 This is seen to be a natural isomorphism as follows. For $V \in \ob \vectf$, $\opd (V)$ is the free $\opd$-algebra on $V$ given by the Schur functor $\opd (V) = \bigoplus_{n \in \nat} \opd (n) \otimes_{\sym_n} V^{\otimes n}$. 
  This yields the analytic functor $\opd (-) : \vectf \rightarrow \opdalgf$ and one has  the restriction map
 \[
 \nt _{A \in \opdalgf} (A^{\otimes n}, A)
\rightarrow 
 \nt _{V\in \vectf} (\opd (V)^{\otimes n}, \opd(V)).
 \]
between $\kring$-vector spaces of natural transformations (which, in general, is far from being an isomorphism).

The unit of the operad $\opd$ induces  $V \rightarrow \opd (V)$ and hence $V^{\otimes n} \rightarrow \opd (V)^{\otimes n}$, so that  restriction gives 
 \[
 \nt _{V\in \vectf} (\opd (V)^{\otimes n}, \opd(V))
\rightarrow 
 \nt _{V\in \vectf} (V^{\otimes n}, \opd(V)).
 \]
 
\begin{prop}
\label{prop:opd_operations}
For $\opd$ an operad and $n \in \nat$, the morphisms
\[
\opd (n) 
\stackrel{\cong}{ \rightarrow }
 \nt _{A \in \opdalgf} (A^{\otimes n}, A)
 \stackrel{\cong}{ \rightarrow } 
  \nt _{V\in \vectf} (V^{\otimes n}, \opd(V))
\]
are isomorphisms of $\sym_n\op$-modules.
\end{prop}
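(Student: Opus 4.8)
The plan is to identify the composite of the two displayed arrows with the Schur-functor isomorphism of Proposition \ref{prop:schur_fully_faithful}, and then to show separately that the second arrow is injective; these two facts together will force both arrows to be isomorphisms. Denote the first arrow by $a : \opd(n) \to \nt_{A \in \opdalgf}(A^{\otimes n}, A)$, the second by $\rho : \nt_{A \in \opdalgf}(A^{\otimes n}, A) \to \nt_{V\in\vectf}(V^{\otimes n}, \opd(V))$, and their composite by $\Phi = \rho \circ a$.

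First I would analyse $\Phi$ directly. The source functor $V \mapsto V^{\otimes n}$ is homogeneous polynomial of degree $n$, while $\opd(V) = \bigoplus_m \opd(m)\otimes_{\sym_m} V^{\otimes m}$ decomposes into its homogeneous components. Since there are no nonzero natural transformations between homogeneous polynomial functors of distinct degrees (the remark following Corollary \ref{cor:semisimple}), only the degree-$n$ summand contributes, so that
\[
\nt_{V\in\vectf}(V^{\otimes n}, \opd(V)) \cong \nt_{V\in\vectf}(V^{\otimes n}, \opd(n)\otimes_{\sym_n} V^{\otimes n}).
\]
By Proposition \ref{prop:schur_fully_faithful} applied to the right $\sym_n$-module $\opd(n)$, the right-hand side is naturally isomorphic to $\opd(n)$ as a $\sym_n\op$-module. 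Tracing the definitions — an operation $\theta \in \opd(n)$ acts on the free algebra $\opd(V)$, and $\Phi(\theta)$ sends $v_1 \otimes \cdots \otimes v_n \in V^{\otimes n}$ to $\theta(v_1, \ldots, v_n) = \theta\otimes(v_1\otimes\cdots\otimes v_n) \in \opd(V)$ — shows that $\Phi$ \emph{is} this Schur isomorphism. Hence $\Phi$ is an isomorphism of $\sym_n\op$-modules.

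Next I would prove that $\rho$ is injective. The key observation is that, by the free--forgetful adjunction, an algebra morphism $\opd(\kring^n) \to \opd(V)$ is the same datum as an arbitrary $n$-tuple $(a_1, \ldots, a_n)$ of elements of $\opd(V)$, namely the images of the generators $e_1, \ldots, e_n$. Given $\eta$ in the kernel of $\rho$, naturality of $\eta$ along the morphism $\phi : \opd(\kring^n) \to \opd(V)$ with $\phi(e_i) = a_i$ yields
\[
\eta_{\opd(V)}(a_1 \otimes \cdots \otimes a_n) = \phi\bigl(\eta_{\opd(\kring^n)}(e_1 \otimes \cdots \otimes e_n)\bigr).
\]
Because $e_1 \otimes \cdots \otimes e_n$ lies in the image of the unit $(\kring^n)^{\otimes n} \hookrightarrow \opd(\kring^n)^{\otimes n}$, the hypothesis $\rho(\eta)=0$ forces $\eta_{\opd(\kring^n)}(e_1\otimes\cdots\otimes e_n)=0$; since the $a_i$ are arbitrary and decomposable tensors span, $\eta_{\opd(V)}$ vanishes for every $V$, so $\eta = 0$.

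Finally I would assemble the argument: since $\Phi = \rho \circ a$ is an isomorphism it is surjective, so $\rho$ is surjective; combined with the injectivity just established, $\rho$ is an isomorphism, and therefore $a = \rho^{-1}\circ \Phi$ is an isomorphism as well. All of the maps are built from place permutations and from the operadic and adjunction structure, so they are compatible with the $\sym_n\op$-actions, giving isomorphisms of $\sym_n\op$-modules. I expect the main obstacle to be the injectivity step — recognising that naturality over \emph{free} algebra morphisms alone pins down a natural transformation, via the adjunction trick realising arbitrary elements of $\opd(V)$ as images of generators — together with the bookkeeping needed to confirm that the operadically-defined composite $\Phi$ really coincides with the Schur isomorphism of Proposition \ref{prop:schur_fully_faithful}.
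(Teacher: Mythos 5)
Your proposal is correct and follows essentially the same route as the paper: identify the composite with the Schur-functor isomorphism of Proposition \ref{prop:schur_fully_faithful} (using that only the degree-$n$ homogeneous component of $\opd(V)$ contributes), then prove injectivity of the restriction map by the universal example $\opd(\kring^n)$, exploiting freeness to realise arbitrary tuples $(a_1,\ldots,a_n)$ as images of the generators, and conclude formally that both maps are isomorphisms. Your ``adjunction trick'' is exactly the paper's universal-example argument (cf.\ \cite[Section 5.2.13]{LV}), merely phrased as a kernel computation rather than as a determination statement.
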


\begin{proof} 
Although this is well-known, the argument is outlined, since it is generalized below.

Working with analytic functors over a field of characteristic zero, 
the space $\nt _{V\in \vectf} (V^{\otimes n}, \opd(V))$ is isomorphic to $\nt _{V\in \vectf} (V^{\otimes n}, \opd(n) \otimes _{\sym_n} V^{\otimes n})$. The latter is naturally isomorphic to $\opd (n)$, by Proposition \ref{prop:schur_fully_faithful}.  The composite natural transformation is an isomorphism, by construction of the first map.

To complete the proof, it suffices to show that $ \nt _{A \in \opdalgf} (A^{\otimes n}, A)
 \rightarrow 
  \nt _{V\in \vectf} (V^{\otimes n}, \opd(V))$ is injective. This is proved by the universal example as in \cite[Section 5.2.13]{LV}. Let $V_n = \langle x_1, \ldots ,x_n \rangle$ be a $\kring$-vector space  of dimension $n$ with basis $\{x_i\}$.
 Given an $\opd$-algebra $A$ and an element $a_1 \otimes \ldots \otimes a_n$ of $A^{\otimes n}$, let $V_n \rightarrow A$ be the linear map determined by $x_i \mapsto a_i$. This induces a morphism  $\opd (V_n) \rightarrow A$ of $\opd$-algebras, since $\opd (V_n)$ is the free $\opd$-algebra on $V_n$.
 
  For a natural transformation $\theta \in   \nt _{A \in \opdalg} (A^{\otimes n}, A)$, the commutative diagram 
 \[
 \xymatrix{
 V_n^{\otimes n} 
 \ar[d]
 \ar[r]
 &
 \opd (V_n) ^{\otimes n} 
 \ar[r]^{\theta_{\opd (V_n)}} 
 &
 \opd (V_n) 
 \ar[d]
 \\
 A^{\otimes n}
 \ar[rr]_{\theta_A} 
 &
 &
 A,
 }
 \]  
  in which the vertical maps are induced by $V_n \rightarrow A$, shows that the image of $a_1 \otimes \ldots \otimes a_n$ is determined by the image of $x_1 \otimes \ldots \otimes x_n$ under $V_n^{\otimes n} \rightarrow \opd( V_n)$. The result follows. 
\end{proof}

 \subsection{Passing to PROPs}
 The above extends to considering the PROP associated to the  operad $\opd$ (or more precisely, its underlying $\kring$-linear category).  
 
 \begin{nota}
 (See \cite[Section 5.4.1]{LV}.)
 Let $\cat \opd$ denote the category associated to the  operad $\opd$. 
 \end{nota}
 
 The category $\cat \opd$ has set of objects $\nat$; for $m, n \in \nat$, $\cat \opd (m , n)$ is a $\sym_m\op \times \sym_n$-module. Moreover $\cat \opd (m, 1) = \opd (m)$ as $\sym_m \op$-modules.
 
\begin{prop} 
(Cf. \cite[Proposition 5.4.2]{LV}) 
\label{prop:funct_cat_opd}
For $A \in \ob \opdalg$, there is a  functor
$\underline{A} : \cat \opd \rightarrow \vect
$
 given on objects by $m \mapsto A^{\otimes m}$ and with morphisms acting via the $\opd$-algebra structure of $A$. This construction is natural with respect to $A$.
\end{prop}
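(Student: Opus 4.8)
The $\opd$-algebra structure on $A$ is, by definition, a morphism of operads $\rho \colon \opd \to \opend(A)$; concretely it supplies, for each $k$, a $\sym_k$-equivariant map $\rho_k \colon \opd(k) \to \hom_{\vect}(A^{\otimes k}, A)$ that is compatible with operadic composition and with the unit. The plan is to feed $\rho$ into the generators-and-relations description of $\cat\opd$ recalled in \cite[Section 5.4.1]{LV}: the morphisms of $\cat\opd$ are generated, under vertical composition and the horizontal (tensor) product, by the operations $\opd(k) = \cat\opd(k,1)$ together with the symmetric groups $\sym_n \subseteq \cat\opd(n,n)$, subject to the relations that encode the operad axioms and the symmetric monoidal structure.

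First I would set $\underline{A}(m) := A^{\otimes m}$ on objects and, on generators, send an operation $\mu \in \opd(k)$ to $\rho_k(\mu) \colon A^{\otimes k} \to A$ and a permutation $\sigma \in \sym_n$ to the corresponding place permutation of $A^{\otimes n}$ (no Koszul signs intervene, since $\opd$ is an operad in ungraded vector spaces). This prescription extends uniquely to a candidate assignment on all morphisms once one demands that $\underline{A}$ preserve vertical composition and the tensor product, using the symmetric monoidal structure of $\vect$.

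The one substantive point is then to check that this assignment respects the defining relations of $\cat\opd$, so that it descends to a genuine functor. These relations are exactly the operadic associativity, unit and equivariance axioms, and they are carried to the corresponding identities among the maps $\rho_k(\mu)$ precisely because $\rho$ is a morphism of operads. Conceptually, this says that the passage $\opd \mapsto \cat\opd$ is functorial, so that $\rho$ induces a $\kring$-linear functor $\cat\rho \colon \cat\opd \to \cat\opend(A)$; then $\underline{A}$ is the composite of $\cat\rho$ with the tautological evaluation functor $\cat\opend(A) \to \vect$, $m \mapsto A^{\otimes m}$, under which a morphism assembled from multilinear maps $A^{\otimes k} \to A$ acts by evaluation. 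This coherence verification — matching the operad axioms with the relations of $\cat\opd$ — is where the real (if routine and well documented, cf. \cite[Proposition 5.4.2]{LV}) work lies, and is the step I expect to be the main obstacle.

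Finally, for naturality in $A$, given a morphism $f \colon A \to B$ of $\opd$-algebras I would propose the components $f^{\otimes m} \colon A^{\otimes m} \to B^{\otimes m}$ and verify that, for every $\phi \in \cat\opd(m,n)$, the square relating $\underline{A}(\phi)$ and $\underline{B}(\phi)$ via $f^{\otimes m}$ and $f^{\otimes n}$ commutes. Since $\underline{A}(\phi)$ and $\underline{B}(\phi)$ are built by composing and tensoring the images of generators, it suffices to check commutation on generators: for an operation $\mu$ this is exactly the identity $f \circ \rho^A_k(\mu) = \rho^B_k(\mu) \circ f^{\otimes k}$ expressing that $f$ is a morphism of $\opd$-algebras, and for a permutation it is immediate. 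These generator squares then paste along composition and the tensor product to yield the desired natural transformation $\underline{A} \Rightarrow \underline{B}$, completing the construction.
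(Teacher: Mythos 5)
Your proposal is correct: the paper gives no argument of its own for this proposition, delegating it entirely to the citation of \cite[Proposition 5.4.2]{LV}, and your generators-and-relations construction (operations $\opd(k)=\cat\opd(k,1)$ and permutations acting via $\rho$ and place permutations, relations holding because $\rho:\opd\rightarrow\opend(A)$ is a morphism of operads, naturality checked on generators) is precisely the standard argument underlying that reference. Nothing further is needed.
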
 
 
It follows that, for $m, n \in \nat$ and $A \in \ob \opdalg$, there is a  morphism of  
 $\sym_m\op \times \sym_n$-modules:
 \[
 \cat \opd (m,n) 
 \rightarrow 
  \hom_{\vect} (A^{\otimes m}, A^{\otimes n}).
 \]
 Proceeding as in the operadic case, there are restriction maps:
\[
\nt _{A \in \opdalgf} (A^{\otimes m}, A^{\otimes n})
\rightarrow
\nt _{V\in \vectf} (\opd (V)^{\otimes m}, \opd (V)^{\otimes n})
\rightarrow
\nt _{V \in \vectf} (V^{\otimes m}, \opd(V)^{\otimes n}).
\] 
 The analogue of Proposition \ref{prop:opd_operations} is the following:
 
 \begin{prop}
 \label{prop:prop_operations}
 For $\opd$ an operad and  $m , n \in \nat$, there are natural isomorphisms of  $\sym_m\op \times \sym_n$-modules:
 \[
 \cat \opd (m,n) 
\stackrel{\cong}{ \rightarrow }
  \nt _{A \in \opdalgf} (A^{\otimes m}, A^{\otimes n})
\stackrel{\cong}{ \rightarrow }
 \nt _{V \in \vectf} (V^{\otimes m}, \opd(V)^{\otimes n}).
 \]
 \end{prop}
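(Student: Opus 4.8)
The plan is to mirror the proof of Proposition~\ref{prop:opd_operations}, replacing the target $A$ by $A^{\otimes n}$ and keeping track of the additional $\sym_n$-action that permutes these $n$ tensor factors. Throughout, the $\sym_m\op$-action arises from place permutations of the factors of $V^{\otimes m}$ (equivalently $A^{\otimes m}$), and the $\sym_n$-action from permuting the $n$ factors of $\opd(V)^{\otimes n}$ (equivalently $A^{\otimes n}$); every map below must be checked to respect both.

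First I would identify the right-hand term. The functor $V\mapsto\opd(V)^{\otimes n}$ is analytic, so $\nt_{V\in\vectf}(V^{\otimes m},\opd(V)^{\otimes n})$ sees only the homogeneous degree-$m$ summand $(\opd(V)^{\otimes n})_{[m]}$, there being no interaction between homogeneous components of different degree. Expanding $\opd(V)=\bigoplus_d\opd(d)\otimes_{\sym_d}V^{\otimes d}$ and taking the $n$-fold tensor power exhibits $(\opd(V)^{\otimes n})_{[m]}$ as a Schur functor whose coefficient $\sym_m\op$-module is $\bigoplus_{d_1+\cdots+d_n=m}\mathrm{Ind}_{\sym_{d_1}\times\cdots\times\sym_{d_n}}^{\sym_m}\big(\opd(d_1)\otimes\cdots\otimes\opd(d_n)\big)$. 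By full-faithfulness of the Schur construction (Proposition~\ref{prop:schur_fully_faithful}), $\nt_{V\in\vectf}(V^{\otimes m},\opd(V)^{\otimes n})$ is naturally isomorphic to this module; by the construction of $\cat\opd$ (cf.\ \cite[Section 5.4.1]{LV}) this coefficient module, equipped with the $\sym_n$-action permuting the summands, is precisely $\cat\opd(m,n)$ (recovering $\cat\opd(m,1)=\opd(m)$ when $n=1$).

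Next I would handle the first map. By Proposition~\ref{prop:funct_cat_opd} the functor $\underline{A}$ supplies the morphism $\cat\opd(m,n)\rightarrow\nt_{A\in\opdalgf}(A^{\otimes m},A^{\otimes n})$, and restriction along $\opd(-)\colon\vectf\rightarrow\opdalgf$ followed by restriction along the unit $V^{\otimes m}\rightarrow\opd(V)^{\otimes m}$ gives the second map. By construction of the first map, the composite $\cat\opd(m,n)\rightarrow\nt_{A\in\opdalgf}(A^{\otimes m},A^{\otimes n})\rightarrow\nt_{V\in\vectf}(V^{\otimes m},\opd(V)^{\otimes n})$ coincides with the isomorphism obtained above, so the first map is injective. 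It therefore suffices to prove that the second (restriction) map is injective, for then all three morphisms are isomorphisms; the $\sym_m\op\times\sym_n$-equivariance is built into each step.

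For this I would argue by the universal example, verbatim as in the proof of Proposition~\ref{prop:opd_operations}. Put $V_m=\langle x_1,\ldots,x_m\rangle$. Any decomposable $a_1\otimes\cdots\otimes a_m\in A^{\otimes m}$ determines an $\opd$-algebra map $\opd(V_m)\rightarrow A$ via $x_i\mapsto a_i$; naturality of $\theta\in\nt_{A\in\opdalgf}(A^{\otimes m},A^{\otimes n})$ then yields a commutative square showing that $\theta_A(a_1\otimes\cdots\otimes a_m)$ is determined by the value of $\theta_{\opd(V_m)}$ on the image of $x_1\otimes\cdots\otimes x_m$ under $V_m^{\otimes m}\rightarrow\opd(V_m)^{\otimes m}$, that is, by the image of $\theta$ under the second map. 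Since decomposables span $A^{\otimes m}$, injectivity follows. I expect the main obstacle to be the bookkeeping of the first step: matching $(\opd(V)^{\otimes n})_{[m]}$ with the coefficient module $\cat\opd(m,n)$ while checking compatibility with the $\sym_m\op$- and $\sym_n$-actions simultaneously; the injectivity step is a routine adaptation of the operadic case.
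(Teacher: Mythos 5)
Your proposal is correct and takes essentially the same approach as the paper: the paper's proof consists of the single remark that it follows the proof of Proposition~\ref{prop:opd_operations} \emph{mutatis mutandis}, and your argument is precisely that adaptation (composite identified via Schur full-faithfulness, then injectivity of the restriction map by the universal example $\opd(V_m)$). Your explicit matching of the degree-$m$ coefficient module of $\opd(V)^{\otimes n}$, namely $\bigoplus_{d_1+\cdots+d_n=m}\mathrm{Ind}_{\sym_{d_1}\times\cdots\times\sym_{d_n}}^{\sym_m}\big(\opd(d_1)\otimes\cdots\otimes\opd(d_n)\big)$, with $\cat \opd(m,n)$ simply spells out the bookkeeping that the paper leaves implicit.
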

  
\begin{proof}
The proof follows that of Proposition \ref{prop:opd_operations}, {\em mutatis mutandis}.
\end{proof}

For the application, one works in the graded setting: the $\opd$-algebra  $A$ is replaced by the suspension $s A$ of its underlying vector space. For $m \in \nat$, there is a natural isomorphism $\sym_m$-equivariant isomorphism
\[
(sA)^{\otimes m}
\cong 
s^m (  \sgn_m \otimes  A ^{\otimes m})
\]
where $\sym_m$ acts via place permutations on $(sA)^{\otimes m}$ and via the diagonal action on $\sgn_m \otimes  A ^{\otimes m}$ for the signature representation $\sgn_m$ and the place permutation action on $A^{\otimes m}$.

When considering natural transformations, one allows morphisms of non-zero degree. The above naturality statement gives the natural isomorphism of $\sym_n \times \sym_m\op$-modules:
\[
\nt _{A \in \opdalgf} ((sA)^{\otimes m} ,(sA)^{\otimes n} ) 
\cong 
s^{n-m} \big(  \sgn_n \otimes \nt _{A \in \opdalgf} (A^{\otimes m} , A^{\otimes n} ) \otimes \sgn_m \big),
\]
where the right hand side is equipped with the diagonal module structure, considering $\sgn_m$ as a $\sym_m\op$-module.

One deduces the following from Proposition \ref{prop:prop_operations}:

\begin{prop}
\label{prop:nat_sA_mn}
For $m, n \in \nat$, there are natural isomorphisms of $\sym_m\op \times \sym_n$-modules:
\[
s^{n-m}
\big( 
\sgn_n \otimes \cat \opd (m, n ) \otimes \sgn_m
\big) 
\cong 
\nt _{A\in \opdalgf} ((sA)^{\otimes m} ,(sA)^{\otimes n} ) 
\cong 
\nt _{V \in \vectf} ((sV)^{\otimes m}, (s\opd(V))^{\otimes n}).
\]
\end{prop}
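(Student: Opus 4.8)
The plan is to obtain both isomorphisms as formal substitutions into the graded naturality isomorphism
\[
\nt_{A \in \opdalgf}((sA)^{\otimes m}, (sA)^{\otimes n}) \cong s^{n-m}\big(\sgn_n \otimes \nt_{A \in \opdalgf}(A^{\otimes m}, A^{\otimes n}) \otimes \sgn_m\big)
\]
already recorded before the statement, which packages all the Koszul-sign and degree bookkeeping coming from the suspension isomorphism $(sA)^{\otimes m} \cong s^m(\sgn_m \otimes A^{\otimes m})$. Granting this, the content of the Proposition is simply to feed in the ungraded identifications of Proposition \ref{prop:prop_operations}.

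For the first isomorphism I would substitute $\cat \opd (m,n) \cong \nt_{A \in \opdalgf}(A^{\otimes m}, A^{\otimes n})$, which is an isomorphism of $\sym_m\op \times \sym_n$-modules. Since tensoring with the one-dimensional modules $\sgn_n \otimes (-) \otimes \sgn_m$ and applying the degree shift $s^{n-m}$ are exact and module-preserving operations, this transports directly to give $s^{n-m}(\sgn_n \otimes \cat \opd (m,n) \otimes \sgn_m) \cong \nt_{A \in \opdalgf}((sA)^{\otimes m}, (sA)^{\otimes n})$. For the second isomorphism I would substitute the other identification of Proposition \ref{prop:prop_operations}, namely $\nt_{A \in \opdalgf}(A^{\otimes m}, A^{\otimes n}) \cong \nt_{V \in \vectf}(V^{\otimes m}, \opd(V)^{\otimes n})$, and then re-assemble the right-hand side: applying the suspension isomorphisms $(sV)^{\otimes m} \cong s^m(\sgn_m \otimes V^{\otimes m})$ and $(s\opd(V))^{\otimes n} \cong s^n(\sgn_n \otimes \opd(V)^{\otimes n})$ on the $\vectf$ side produces exactly the factors $s^{n-m}$, $\sgn_n$ and $\sgn_m$, so that $s^{n-m}(\sgn_n \otimes \nt_{V \in \vectf}(V^{\otimes m}, \opd(V)^{\otimes n}) \otimes \sgn_m)$ is identified with $\nt_{V \in \vectf}((sV)^{\otimes m}, (s\opd(V))^{\otimes n})$.

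I expect the only real obstacle to be checking that the three $\sym_m\op \times \sym_n$-module structures line up compatibly through these substitutions: one must verify that the signature twist coming from permuting suspended source factors contributes the right-module factor $\sgn_m$, that the twist from the target factors contributes $\sgn_n$, and that the suspensions combine to the net shift $s^{n-m}$ rather than any other degree. This is pure sign-and-degree bookkeeping, identical on the $\opdalgf$ and $\vectf$ sides by the naturality of the suspension isomorphism, and introduces no idea beyond Proposition \ref{prop:prop_operations}; I would present it as a \emph{combine the stated isomorphisms} argument with the equivariance made explicit.
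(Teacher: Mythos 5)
Your proposal is correct and is essentially the paper's own argument: the paper derives the graded naturality isomorphism $\nt_{A \in \opdalgf}((sA)^{\otimes m},(sA)^{\otimes n}) \cong s^{n-m}\big(\sgn_n \otimes \nt_{A \in \opdalgf}(A^{\otimes m},A^{\otimes n}) \otimes \sgn_m\big)$ from the suspension isomorphism $(sA)^{\otimes m} \cong s^m(\sgn_m \otimes A^{\otimes m})$ and then simply states that the Proposition follows by substituting the two identifications of Proposition \ref{prop:prop_operations}, which is exactly your substitution-and-reassembly argument (including the parallel use of the suspension isomorphism on the $\vectf$ side). Your explicit attention to the equivariance bookkeeping just spells out what the paper leaves implicit.
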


\subsection{A further refinement}

Proposition \ref{prop:nat_sA_mn} can be extended by considering  polynomial functors from $\vect$ to $\vect$, as in Appendix \ref{sect:poly}. For clarity, we restrict to considering Schur functors.

\begin{nota}
For $M$ a $\sym_m\op$-module ($m \in \nat$), let $M^\dag$ denote the $\sym_m\op$-module $M \otimes \sgn_m$, equipped with the diagonal structure.
\end{nota} 

The relevance of this twisting is shown by:

\begin{lem}
\label{lem:twist}
For $M$ a $\sym_m\op$-module ($m \in \nat$), there is a natural isomorphism with respect to $W \in \ob \vect$:
\[
M \otimes_{\sym_m} (sW)^{\otimes m} 
\cong 
s^m \big( M^\dag \otimes _{\sym_m} W^{\otimes m} \big),
\]
where $\sym_m$ acts via place permutations on $(sW)^{\otimes m}$ and $W^{\otimes m}$.
\end{lem}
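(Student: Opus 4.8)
The plan is to prove the isomorphism
\[
M \otimes_{\sym_m} (sW)^{\otimes m}
\cong
s^m \big( M^\dag \otimes_{\sym_m} W^{\otimes m} \big)
\]
by unwinding the Koszul sign conventions that govern the place-permutation action on a tensor power of a suspension. First I would recall the fundamental sign computation: for a homogeneous element $w_1 \otimes \cdots \otimes w_m \in W^{\otimes m}$, the canonical degree-$m$ isomorphism $(sW)^{\otimes m} \cong s^m(W^{\otimes m})$ sends $sw_1 \otimes \cdots \otimes sw_m$ to $s^m(w_1 \otimes \cdots \otimes w_m)$, but this identification is \emph{not} $\sym_m$-equivariant for the naive place-permutation actions. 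Instead, moving the $m$ suspension symbols $s$ past the tensor factors (and past each other) according to the Koszul rule introduces, for a permutation $\sigma \in \sym_m$ acting by place permutation, exactly the signature $\sgn(\sigma)$ as an extra sign (since $W$ is in degree zero, the only homological degree in play is the degree $1$ carried by each $s$, and permuting $m$ degree-one symbols contributes $\sgn(\sigma)$). This is precisely the content already recorded earlier in the excerpt in the form $(sA)^{\otimes m} \cong s^m(\sgn_m \otimes A^{\otimes m})$; here I would reuse that computation verbatim with $A$ replaced by $W$.

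The key step is therefore to observe that the diagram
\[
\xymatrix{
(sW)^{\otimes m}
\ar[r]^{\sigma}
\ar[d]_{\cong}
&
(sW)^{\otimes m}
\ar[d]^{\cong}
\\
s^m(W^{\otimes m})
\ar[r]_{\sgn(\sigma)\cdot \sigma}
&
s^m(W^{\otimes m})
}
\]
commutes for every $\sigma \in \sym_m$, where the top arrow is the naive place permutation with Koszul signs and the bottom arrow is $\sgn(\sigma)$ times the naive (signless) place permutation on $W^{\otimes m}$. This says exactly that the $\sym_m$-equivariant structure on $(sW)^{\otimes m}$ corresponds, under the canonical degree-$m$ identification, to the place-permutation action on $W^{\otimes m}$ \emph{twisted by} the signature $\sgn_m$. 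In the notation of the paper, twisting the place-permutation $\sym_m$-module $W^{\otimes m}$ by $\sgn_m$ is what converts $M$ into $M^\dag = M \otimes \sgn_m$ upon taking the coinvariant tensor $M \otimes_{\sym_m}(-)$, since for any right $\sym_m$-module $M$ one has $M \otimes_{\sym_m} (N \otimes \sgn_m) \cong (M \otimes \sgn_m) \otimes_{\sym_m} N = M^\dag \otimes_{\sym_m} N$.

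Combining these observations, I would apply $M \otimes_{\sym_m} (-)$ to the $\sym_m$-equivariant identification, using that $M \otimes_{\sym_m}(-)$ sends the suspended place-permutation module $(sW)^{\otimes m}$ to $M \otimes_{\sym_m}(sW)^{\otimes m}$ on the left and to $s^m\big(M \otimes_{\sym_m}(\sgn_m \otimes W^{\otimes m})\big) = s^m\big(M^\dag \otimes_{\sym_m} W^{\otimes m}\big)$ on the right. Naturality in $W$ is automatic because every map in sight (the canonical suspension identification, the coinvariants functor) is natural in $W$. I expect the only genuine obstacle to be the careful bookkeeping of Koszul signs in verifying the commuting square above, i.e.\ confirming that permuting $m$ copies of the degree-one symbol $s$ contributes precisely $\sgn(\sigma)$ and no further sign from the degree-zero factors $w_i$; once this sign is pinned down, the rest is a formal consequence of the definition of $M^\dag$ and the behaviour of coinvariants under tensoring with $\sgn_m$.
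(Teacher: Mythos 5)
Your proposal is correct and follows exactly the argument the paper leaves implicit: the lemma is stated without proof, resting on the $\sym_m$-equivariant identification $(sA)^{\otimes m} \cong s^m(\sgn_m \otimes A^{\otimes m})$ displayed just beforehand, which you reuse verbatim (with the correct Koszul sign computation) and then combine with the standard transfer of the $\sgn_m$-twist across the coinvariants, $M \otimes_{\sym_m} (\sgn_m \otimes W^{\otimes m}) \cong M^\dag \otimes_{\sym_m} W^{\otimes m}$. Nothing is missing; this is precisely the intended proof.
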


The following is a direct consequence of Proposition \ref{prop:nat_sA_mn}:

\begin{prop}
\label{prop:refined_prop_operations}
Let $M, N$ be finite-dimensional $\sym_m\op-$ and $\sym_n\op$-modules respectively, for $m,n \in \nat$. The isomorphism of Proposition \ref{prop:nat_sA_mn} induces natural isomorphisms:
\begin{eqnarray*}
s^{n-m} \big (\hom_\kring (M^\dag , N^\dag )\otimes_{\sym_n \times \sym_m\op} \cat \opd (m,n) \big)  
& \cong &
\nt _{A \in \opdalgf} (M \otimes_{\sym_m} (sA)^{\otimes m} , N \otimes_{\sym_n}(sA)^{\otimes n} ) 
\\
&\cong&
\nt _{V \in \vectf} (M \otimes_{\sym_m}(sV)^{\otimes m}, N \otimes_{\sym_n}(s\opd(V))^{\otimes n}),
\end{eqnarray*}
where $\hom_\kring (M^\dag , N^\dag )$ is considered as a $\sym_m \times \sym_n\op$-module with the diagonal structure.
\end{prop}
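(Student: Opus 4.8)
The plan is to obtain the two displayed isomorphisms by applying the ``Schur-functorification'' operation $X \mapsto \hom_{\sym_m\op}(M,\, N\otimes_{\sym_n}X)$ term-by-term to the three-term isomorphism of Proposition \ref{prop:nat_sA_mn}, and to check that on each of the three terms this operation produces exactly the space appearing in the statement. On the two spaces of natural transformations this amounts to pulling the coefficient modules $M$ and $N$ out of the source and target of the $\nt$-spaces; on the explicit module $s^{n-m}(\sgn_n\otimes\cat\opd(m,n)\otimes\sgn_m)$ it is a matter of reorganising the $\sym_m\op\times\sym_n$-action together with the sign twists and the suspension shift.

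First I would treat the source. For any functor $G$ of the argument $A$, the tensor--hom adjunction over the group algebra $\kring[\sym_m]$ gives a natural isomorphism
\[
\nt_{A\in\opdalgf}\big(M\otimes_{\sym_m}(sA)^{\otimes m},\, G(A)\big)
\cong
\hom_{\sym_m\op}\big(M,\, \nt_{A\in\opdalgf}((sA)^{\otimes m}, G(A))\big),
\]
where the $\sym_m\op$-structure on the inner $\nt$-space is induced by the place-permutation action of $\sym_m$ on the source $(sA)^{\otimes m}$. Indeed, a map out of the $\sym_m$-coinvariants $M\otimes_{\sym_m}(sA)^{\otimes m}$ is the same as a $\sym_m$-equivariant map $M\to\hom_\kring((sA)^{\otimes m}, G(A))$, and performing this naturally in $A$ replaces $\hom_\kring$ by $\nt$. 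The identical argument applies over $\vectf$ after restriction along the unit $V\hookrightarrow\opd(V)$.

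The target is the delicate point. Taking $G(A)=N\otimes_{\sym_n}(sA)^{\otimes n}$, I claim the natural isomorphism
\[
\nt_{A\in\opdalgf}\big((sA)^{\otimes m},\, N\otimes_{\sym_n}(sA)^{\otimes n}\big)
\cong
N\otimes_{\sym_n}\nt_{A\in\opdalgf}\big((sA)^{\otimes m}, (sA)^{\otimes n}\big).
\]
There is an evident natural map from right to left sending $\nu\otimes\phi$ to $v\mapsto \nu\otimes\phi(v)$, and the content is that it is an isomorphism. This is where characteristic zero is essential: since $\kring[\sym_n]$ is semisimple (Corollary \ref{cor:semisimple}) and $N$ is finite-dimensional, $N$ is a retract of a finite sum of copies of $\kring[\sym_n]$, the functor $N\otimes_{\sym_n}(-)$ commutes with finite direct sums and retracts, and so does $\nt_{A}((sA)^{\otimes m},-)$; both sides are additive functors of $N$ agreeing, compatibly with the module structure maps, on the regular module $\kring[\sym_n]$, hence agree for all $N$. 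I expect this commutation of $\nt$ with the $\sym_n$-coinvariants in the \emph{target} to be the main obstacle, since $\hom$-functors do not commute with colimits in the second variable in general; the argument rests squarely on semisimplicity and finite-dimensionality.

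Combining the two reductions and substituting the explicit identification of Proposition \ref{prop:nat_sA_mn} yields
\[
\nt_{A\in\opdalgf}\big(M\otimes_{\sym_m}(sA)^{\otimes m},\, N\otimes_{\sym_n}(sA)^{\otimes n}\big)
\cong
s^{n-m}\,\hom_{\sym_m\op}\big(M,\, N\otimes_{\sym_n}(\sgn_n\otimes\cat\opd(m,n)\otimes\sgn_m)\big).
\]
It then remains to recognise the right-hand side as $s^{n-m}\big(\hom_\kring(M^\dag, N^\dag)\otimes_{\sym_n\times\sym_m\op}\cat\opd(m,n)\big)$. Here I would absorb the two one-dimensional sign representations into the coefficient modules: moving $\sgn_n$ across the coinvariants replaces $N$ by $N^\dag$ (exactly as in Lemma \ref{lem:twist}) and moving $\sgn_m$ across $\hom_{\sym_m\op}(M,-)$ replaces $M$ by $M^\dag$; the remaining $\sym_m$- and $\sym_n$-actions of $\cat\opd(m,n)$ are then contracted against those of $\hom_\kring(M^\dag, N^\dag)$ by the standard identification of finite-dimensional modules over the semisimple algebras in play, the suspension shift $s^{n-m}$ passing through all these $\kring$-linear manipulations. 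Finally, the second isomorphism of the statement, the $\vectf$-description, is obtained by running the same two reductions on the third term of Proposition \ref{prop:nat_sA_mn} in place of the second, with $M\otimes_{\sym_m}$ applied to $(sV)^{\otimes m}$ in the source and $N\otimes_{\sym_n}$ applied to $(s\opd(V))^{\otimes n}$ in the target.
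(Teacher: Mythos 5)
Your proposal is correct and takes essentially the same route as the paper: the paper offers no written proof, simply asserting Proposition \ref{prop:refined_prop_operations} as a direct consequence of Proposition \ref{prop:nat_sA_mn}, and your reductions (tensor--hom adjunction to pull $M\otimes_{\sym_m}$ out of the source, commuting $N\otimes_{\sym_n}(-)$ past the $\nt$-space using semisimplicity of $\kring[\sym_n]$ and finite-dimensionality of $N$, then absorbing the sign twists into $M^\dag$ and $N^\dag$ and contracting against $\cat \opd(m,n)$) are precisely the manipulations that this intended derivation requires. In particular, you correctly identified the one genuinely non-formal point, namely that commuting the coinvariants functor past the hom-type functor $\nt$ rests on characteristic zero and finite-dimensionality.
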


The significance of the first isomorphism is that it allows the construction of induced natural transformations for arbitrary $\opd$-algebras:

\begin{cor}
\label{cor:large_nat}
For $M, N$ as in Proposition \ref{prop:refined_prop_operations} and $B \in \ob \opdalg$, 
the natural morphism $ \cat \opd (m,n) 
 \rightarrow 
  \hom_{\vect} (B^{\otimes m}, B^{\otimes n})
$ 
induces  a $\kring$-linear map:
\[
\nt _{A \in \opdalgf} (M \otimes_{\sym_m} (sA)^{\otimes m} , N \otimes_{\sym_n}(sA)^{\otimes n} ) 
\rightarrow 
\hom_\kring (M \otimes_{\sym_m} (sB)^{\otimes m} , N \otimes_{\sym_n}(sB)^{\otimes n} )
\]
denoted $\phi \mapsto \phi_B$. This is natural in the following sense: for $B \rightarrow B'$ a morphism of $\opdalg$, the following diagram in $\vect$ commutes:
\[
\xymatrix{
M \otimes_{\sym_m} (sB)^{\otimes m}
\ar[r]^{\phi_B}
\ar[d]
&
N \otimes_{\sym_n}(sB)^{\otimes n}
\ar[d]
\\
M \otimes_{\sym_m} (sB')^{\otimes m}
\ar[r]_{\phi_{B'}}
&
N \otimes_{\sym_n}(sB')^{\otimes n}
}
\]
in which the vertical morphisms are induced by $B \rightarrow B'$.
\end{cor}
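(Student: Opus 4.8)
The plan is to define $\phi_B$ by transporting $\phi$ through the algebraic description of the natural-transformation space furnished by Proposition~\ref{prop:refined_prop_operations}, and then to read off naturality in $B$ from the naturality of the functor $\underline{B}$ of Proposition~\ref{prop:funct_cat_opd}. First I would use the isomorphism of Proposition~\ref{prop:refined_prop_operations} to present the source as
\[
\nt_{A \in \opdalgf}\big(M \otimes_{\sym_m} (sA)^{\otimes m},\ N \otimes_{\sym_n}(sA)^{\otimes n}\big)
\cong
s^{n-m}\big(\hom_\kring(M^\dag, N^\dag) \otimes_{\sym_n \times \sym_m\op} \cat \opd(m,n)\big),
\]
so that $\phi$ is encoded by (a suspension of) a class in the right-hand tensor product. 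The key observation is that this class is built purely from the bimodule $\cat \opd(m,n)$, while Proposition~\ref{prop:funct_cat_opd} supplies, for the arbitrary algebra $B$, a $\sym_m\op \times \sym_n$-equivariant structure map $\rho_B \colon \cat \opd(m,n) \rightarrow \hom_\kring(B^{\otimes m}, B^{\otimes n})$.

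Next I would apply $\id \otimes \rho_B$ followed by the canonical pairing $\hom_\kring(M^\dag, N^\dag) \otimes \hom_\kring(B^{\otimes m}, B^{\otimes n}) \rightarrow \hom_\kring(M^\dag \otimes B^{\otimes m}, N^\dag \otimes B^{\otimes n})$ and the passage to $\sym$-coinvariants, producing an element of $\hom_\kring(M^\dag \otimes_{\sym_m} B^{\otimes m}, N^\dag \otimes_{\sym_n} B^{\otimes n})$. Rewriting source and target by Lemma~\ref{lem:twist}, namely $M \otimes_{\sym_m}(sB)^{\otimes m} \cong s^m(M^\dag \otimes_{\sym_m} B^{\otimes m})$ and its analogue for $N$, and absorbing the overall suspension $s^{n-m}$, this yields the desired $\phi_B \in \hom_\kring(M \otimes_{\sym_m}(sB)^{\otimes m}, N \otimes_{\sym_n}(sB)^{\otimes n})$.

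The hard part will be checking that $\phi \mapsto \phi_B$ is well defined, i.e. independent of the representative of $\phi$ in the tensor product: one must confirm that forming $f \otimes \rho_B(c)$ and descending to the relevant coinvariants respects the balancing relations defining $\otimes_{\sym_n \times \sym_m\op}$. This is precisely the $\sym_m\op \times \sym_n$-equivariance of $\rho_B$ guaranteed by Proposition~\ref{prop:funct_cat_opd} (equivalently, that $\rho_B$ is a morphism of bimodules), which matches the two group actions being quotiented out on the two tensor factors; $\kring$-linearity is then immediate. Finally, for the naturality square I would use that a morphism $B \rightarrow B'$ of $\opd$-algebras induces, by the naturality clause of Proposition~\ref{prop:funct_cat_opd}, a compatibility between $\rho_B$, $\rho_{B'}$ and the maps $B^{\otimes k} \rightarrow B'^{\otimes k}$. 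Since $\phi_B$ and $\phi_{B'}$ arise from the \emph{same} class encoding $\phi$ by applying $\rho_B$, respectively $\rho_{B'}$, this compatibility propagates through $\id \otimes (-)$ and the passage to coinvariants to yield the asserted commuting square.
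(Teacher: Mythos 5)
Your proposal is correct and follows essentially the same route the paper intends: the Corollary is obtained precisely by transporting $\phi$ through the first isomorphism of Proposition \ref{prop:refined_prop_operations}, applying the equivariant structure map $\cat \opd (m,n) \rightarrow \hom_{\vect}(B^{\otimes m}, B^{\otimes n})$ furnished by Proposition \ref{prop:funct_cat_opd} together with the canonical pairing and the rewriting of Lemma \ref{lem:twist}, and deducing the commuting square from the naturality clause of Proposition \ref{prop:funct_cat_opd}. The well-definedness issue you flag (compatibility with the balancing relations over $\sym_n \times \sym_m\op$) is exactly the right point to check, and is handled as you say by the equivariance of the structure map (with the passage between invariants and coinvariants being harmless since $\kring$ has characteristic zero).
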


\begin{rem}
\label{rem:naturality_large_nat}
Corollary \ref{cor:large_nat} extends to treat the compatibility between the composition of natural transformations and the linear morphisms. Namely, in addition to the hypotheses of Corollary \ref{cor:large_nat}, suppose that $Q$ is a finite-dimensional $\sym_q\op$-module, so that one can consider $\nt _{A \in \opdalgf} (N \otimes_{\sym_n} (sA)^{\otimes n} , Q \otimes_{\sym_q}(sA)^{\otimes q} ) $ and $\nt _{A \in \opdalgf} (M \otimes_{\sym_n} (sA)^{\otimes n} , Q \otimes_{\sym_q}(sA)^{\otimes q} ) $, as in Proposition \ref{prop:refined_prop_operations}. Composition of natural transformations gives 
\begin{eqnarray*}
\nt _{A \in \opdalgf} (N \otimes_{\sym_n} (sA)^{\otimes n} , Q \otimes_{\sym_q}(sA)^{\otimes q} )
&\otimes &
\nt _{A \in \opdalgf} (M \otimes_{\sym_m} (sA)^{\otimes m} , N \otimes_{\sym_n}(sA)^{\otimes n} ) 
\longrightarrow \\
&&
\quad
\nt _{A \in \opdalgf} (M \otimes_{\sym_m} (sA)^{\otimes m} , Q \otimes_{\sym_q}(sA)^{\otimes q} ).
\end{eqnarray*}

Likewise, one has composition of $\kring$-linear morphisms:
\begin{eqnarray*}
\hom_\kring (N \otimes_{\sym_n} (sB)^{\otimes n} , Q \otimes_{\sym_q}(sB)^{\otimes q} )
& \otimes &
\hom_\kring (M \otimes_{\sym_m} (sB)^{\otimes m} , N \otimes_{\sym_n}(sB)^{\otimes n} )
\longrightarrow 
\\
&&
\quad 
\hom_\kring (M \otimes_{\sym_m} (sB)^{\otimes m} , Q \otimes_{\sym_q}(sB)^{\otimes q} ).
\end{eqnarray*}

Proposition \ref{prop:funct_cat_opd} together with Proposition \ref{prop:refined_prop_operations} imply that these   are compatible via the $\kring$-linear map of Corollary \ref{cor:large_nat}.
\end{rem}

\section*{Acknowledgements}
The author is very grateful to Teimuraz Pirashvili for suggesting that an earlier construction introduced by the author might be related to \cite[Conjecture 1]{2019arXiv190400121P}. Moreover, he is indebted to him for the question that lead to Theorem \ref{THM:Linf}. 

The author thanks Christine Vespa for drawing Mostovoy's results to his attention. 
 
The author is also extremely grateful to  the anonymous referee for their careful reading of the manuscript and for their useful and insightful suggestions.  
 
\section*{Funding} 

The author is employed by the French {\em Centre National de la Recherche Scientifique} (CNRS). 
 
This work was partially supported by the ANR Project {\em ChroK}, {\tt ANR-16-CE40-0003}.


\providecommand{\bysame}{\leavevmode\hbox to3em{\hrulefill}\thinspace}
\providecommand{\MR}{\relax\ifhmode\unskip\space\fi MR }
\providecommand{\MRhref}[2]{%
  \href{http://www.ams.org/mathscinet-getitem?mr=#1}{#2}
}
\providecommand{\href}[2]{#2}

\end{document}